\newtheorem{thm}{Theorem}[section]
\newtheorem{cor}[thm]{Corollary}
\newtheorem{lem}[thm]{Lemma}
\newtheorem{prop}[thm]{Proposition}
\theoremstyle{definition}
\newtheorem{defn}[thm]{Definition}
\theoremstyle{remark}
\newtheorem{rem}[thm]{Remark}
\numberwithin{figure}{subsection}
\numberwithin{equation}{section}
\renewcommand{\le}{\leqslant}
\renewcommand{\ge}{\geqslant}
\renewcommand{\leq}{\leqslant}
\renewcommand{\geq}{\geqslant}
\newcommand{\Bell}{\mathbb{B}}
\newcommand{\BellcPlus}{\Bell_{c,+}}
\newcommand{\BellcMinus}{\Bell_{c,-}}
\newcommand{\BellcC}{\Bell_{c,+}^{\mathbb{C}}}
\newcommand{\BelldC}{\Bell_{d,+}^{\mathbb{C}}}
\DeclareMathOperator{\BMO}{BMO}
\newcommand{\Omegac}{\Omega_c}
\newcommand{\OmegacC}{\Omega_{c,\mathbb{C}}}
\newcommand{\OmegadC}{\Omega_{d,\mathbb{C}}}
\newcommand{\Omegad}{\Omega_d}
\newcommand{\BelldPlus}{\Bell_{d,+}}
\newcommand{\BelldMinus}{\Bell_{d,-}}
\newcommand{\av}[2]{\langle {#1}\rangle_{{}_{#2}}}
\DeclareMathOperator{\conv}{conv}
\DeclareMathOperator{\sign}{sign}
\newcommand{\cii}[1]{_{{}_{#1}}}
\newcommand{\df}{\buildrel\mathrm{def}\over=}
\newcommand{\eps}{\varepsilon}
\newcommand{\p}{\theta}
\newcommand{\FixedBoundary}{\partial_{\mathrm{skel}}}
\newcommand{\diff}{\mathrm{d}}
\newcommand{\Pop}{{\mathbf P}}
\newcommand{\Nop}{\mathbf{N}}
\DeclareMathOperator{\re}{\mathrm{Re}}
\renewcommand{\ge}{\geqslant}
\renewcommand{\leq}{\leqslant}
\renewcommand{\geq}{\geqslant}
\newcommand{\Ordo}{\mathrm{O}}
\renewcommand{\phi}{\varphi}
\newcommand{\pd}[2]{\frac{\partial #1}{\partial #2}}
\let\comma=,
\begin{document}

\title{Sharpening H\"older's inequality}

\author{H. Hedenmalm\thanks{Support by RSF 
grant 14-41-00010} \and D. M. Stolyarov
\thanks{Support by RSF grant 14-21-00035}
 \and V. I. Vasyunin\thanksmark{1} 
\and P. B. Zatitskiy\thanksmark{2} 
}


%




\maketitle

\begin{abstract} 
We strengthen H\"older's inequality.  The new family of sharp inequalities 
we obtain might be thought of as an analog of Pythagorean theorem for 
the~$L^p$-spaces. Our treatment of the subject matter is based on 
Bellman functions of four variables. 
\end{abstract}

\section{Introduction} 

\subsection{The Cauchy-Schwarz inequality and the 
Pythagorean theorem}
\label{subsec-1.1}

Let~$\mathcal{H}$ be a Hilbert space (over the complex or the reals) with 
an inner product~$\langle\cdot,\cdot\rangle$. The Pythagorean theorem asserts
\begin{equation}
\Big|\Big\langle f,\frac{e}{\|e\|}\Big\rangle\Big|^2
+\|\Pop_{e^\perp}f\|^2=\|f\|^2,\qquad e,f\in\mathcal{H},\,\,\,e\ne0. 
\label{eq-preBessel1}
\end{equation}
Here,~$\Pop_{e^\perp}$ denotes the orthogonal projection onto the orthogonal 
complement of a nontrivial vector~$e$. 
At this point, we note that since~$\|\Pop_{e^\perp}f\|\ge0$, 
the identity~\eqref{eq-preBessel1} implies the 
Cauchy-Schwarz inequality 
\begin{equation*}
|\langle f,e\rangle|\le \|f\|\,\|e\|,\qquad e,f\in\mathcal{H}.
\end{equation*}
We also note that~\eqref{eq-preBessel1} leads to Bessel's inequality:
\begin{equation*}
\sum_{n=1}^{N}|\langle f,e_n\rangle|^2\le \|f\|^2,\qquad f\in\mathcal{H},
\end{equation*}
for an orthonormal system~$e_1,\ldots,e_N$ in~$\mathcal{H}$.

We may think of~\eqref{eq-preBessel1} as of an expression of the precise 
loss in the Cauchy-Schwarz inequality. Our aim in this paper is to find an 
analogous improvement for the well-known H\"older inequality for~$L^p$ norms.
Before we turn to the analysis of~$L^p$ spaces, we need to replace the 
norm of the projection,~$\|\Pop_{e^\perp}f\|$, by an expression which does not
rely on the Hilbert space structure. It is well known that
\begin{equation}
\|\Pop_{e^\perp}f\|=\inf_\alpha\|f-\alpha e\|,
\label{eq-preBessel2}
\end{equation}
where $\alpha$ ranges over all scalars (real or complex). 

\subsection{
Background on H\"older's inequality for~$L^\theta$}
\label{subsec-1.2}

We now consider~$L^\theta(X,\mu)$, where~$(X,\mu)$ is a standard 
$\sigma$-finite measure space. We sometimes focus our attention on finite 
measures, but typically the transfer to the more general $\sigma$-finite case 
is an easy exercise. The functions are assumed complex valued. 
Throughout the paper 
we assume the summability exponents are in the interval $(1,+\infty)$, 
in particular, $1<\theta<+\infty$. 
We reserve the symbol~$p$ for the range~$[2,\infty)$ and~$q$ for~$(1,2]$ 
(we also usually assume that~$p$ and~$q$ are dual in the sense
~$\frac1p + \frac1q = 1$).  
Also, to simplify the presentation, we assume~$\mu$ has no atoms.  

Our point of departure is H\"older's inequality, which asserts that in terms 
of the sesquilinear form
\begin{equation*}
\langle f,g\rangle_\mu:=\int_X f\bar g\,\diff\mu,
\end{equation*} 
we have 
\begin{equation}
|\langle f,g\rangle_\mu|\le \|f\|_{L^\theta(\mu)}\|g\|_{L^{\theta'}(\mu)},\qquad 
f\in L^\theta(X,\mu),\,\,\,g\in L^{\theta'}(X,\mu),
\quad\frac{1}{\theta}+\frac{1}{\theta'}=1,
\label{eq-Hold1}
\end{equation}
so that~$\theta'=\theta/(\theta-1)$ is the dual exponent. H\"older's 
inequality was found independently by Rogers~\cite{Rogers} and 
H\"older~\cite{Holder}.
It is well-known that, for non-zero functions, equality occurs in 
H\"older's inequality~\eqref{eq-Hold1}
if and only if~$f$ has the form~$f=\alpha \Nop_{\theta'}(g)$ for a 
scalar~$\alpha\in\mathbb{C}$. Here,~$\Nop_r$ denotes the nonlinear operator
\begin{equation*}
\Nop_r(h)(x)=\begin{cases}|h(x)|^{r-2}h(x),\quad &h(x)\ne 0;\\
0,\quad &h(x) = 0,
\end{cases} \qquad r \in (1,\infty).
\end{equation*}
Such operators appear naturally in the context of generalized orthogonality 
for $L^p$ spaces (see, e.g., Chapter 4 of Shapiro's book \cite{Shapbook}). 
We note that~$\Nop_\theta$ and~$\Nop_{\theta'}$ are each other's inverses, 
since~$\Nop_\theta(\Nop_{\theta'}(h))=h$ 
and~$\Nop_{\theta'}(\Nop_{\theta}(h))=h$ for
an arbitrary function~$h$. In addition,~$\Nop_{\theta'}$ 
maps~$L^{\theta'}(X,\mu)$ to~$L^{\theta}(X,\mu)$ 
with good control of norms:
\begin{equation*}
\|\Nop_{\theta'}(h)\|_{L^{\theta}(\mu)}^{\theta}=\int_X|h|^{\theta(\theta'-1)}\,\diff\mu=
\int_X|h|^{\theta'}\,\diff\mu=\|h\|_{L^{\theta'}(\mu)}^{\theta'},
\qquad h\in L^{\theta'}(X,\mu).
\end{equation*} 

\subsection{Possible improvement of H\"older's inequality}
We rewrite H\"older's inequality~\eqref{eq-Hold1} in the form
\begin{equation}
\bigg|\bigg\langle f,\frac{g}{\|g\|_{L^{\theta'}(\mu)}}\bigg\rangle_\mu\bigg|^{r}
\le\|f\|^r_{L^\theta(\mu)},\qquad 
f\in L^{\theta}(X,\mu),\,\,\,g\in L^{\theta'}(X,\mu),\,\,\,g\ne0,
\label{eq-Hold2}
\end{equation}
where~$r$ is real and positive. 
The natural choices for~$r$ are~$r=\theta$ and~$r=\theta'$. This looks a 
lot like the
Pythagorean theorem, only that the projection term is missing. 
Indeed, if~$\theta=\theta'=r=2$, the inequality~\eqref{eq-Hold2} expresses 
exactly Pythagorean theorem~\eqref{eq-preBessel1} 
with the projection term suppressed (with~$e=g$). 
How can we find a replacement of the projection term for 
arbitrary~$\theta \ne 2$?
The key to this lies in the already observed fact that we have equality 
in~\eqref{eq-Hold2} 
if and only if~$f=\alpha \Nop_{\theta'}(g)$ for a scalar~$\alpha\in\mathbb{C}$. 
To see things more clearly, let us agree to 
write~$e =\Nop_{\theta'}(g)\in L^p(X,\mu)$ 
and insert this into~\eqref{eq-Hold2}:
\begin{equation*}
\bigg|\bigg\langle f,\frac{\Nop_\theta(e)}{\|e\|_{L^{\theta}(\mu)}^{\theta-1}}
\bigg\rangle_\mu\bigg|^{r}
\le\|f\|^r_{L^\theta(\mu)},\qquad 
e,f\in L^\theta(X,\mu),\,\,\,e\ne0.
\end{equation*}
Now, looking at~\eqref{eq-preBessel2}, knowing that equality in the 
previous inequality 
holds only when~$f$ is a scalar multiple of~$e$, we posit 
the inequality
\begin{equation}
\bigg|\bigg\langle f,\frac{\Nop_\theta(e)}{\|e\|_{L^{\theta}(\mu)}^{\theta-1}}
\bigg\rangle_\mu\bigg|^{r}+c_{\theta,r}\inf_\alpha\|f-\alpha e\|_{L^\theta(\mu)}^r
\le\|f\|^r_{L^\theta(\mu)},\qquad 
e,f\in L^\theta(X,\mu),\,\,\,e\ne0,
\label{eq-Hold3}
\end{equation}
for some constant~$c_{\theta,r}$, $0\le c_{\theta,r}\le1$ (this constant does 
not depend on~$f$ or~$e$, it depends on~$\theta$ and~$r$ only). 
The inequality~\eqref{eq-Hold3}
cannot hold for any constant~$c_{\theta,r}>1$. Indeed, to see this, it suffices
to pick nontrivial~$e$ and~$f$ such that 
\begin{equation*}
\inf_\alpha\|f-\alpha e\|^r_{L^\theta(\mu)}=\|f\|^r_{L^\theta(\mu)},
\end{equation*}
which means that the minimum is attained at~$\alpha=0$. This is easy to do 
for any given~$e$ by simply replacing the function~$f$ by~$f-\alpha^{\star} e$ 
(here,~$\alpha=\alpha^{\star}$ is a point where the infimum is attained).
The inequality~\eqref{eq-Hold3} is appropriate for iteration, using a 
sequence of functions~$e_1,e_2,e_3,\ldots$, 
as in the case of Bessel's inequality, but if~$c_{\theta,r}<1$
there is an exponential decay of the coefficients in the analogue of Bessel's
inequality. 
We note that the inequality~\eqref{eq-Hold3} holds trivially for~$c_{\theta,r}=0$ 
by H\"older's inequality, and gets stronger the bigger~$c_{\theta,r}$ 
is allowed to be. 
\begin{defn}
Let~$c_{\theta,r}^\star$ denote the the largest possible 
value of~$c_{\theta,r}$ such that~\eqref{eq-Hold3} remains valid for any 
complex-valued~$f$ and~$e$.
\end{defn} 
Clearly,~$0\le c_{\theta,r}^\star\le1$. 
As we will see later,~$c_{\theta,r}^{\star} < 1$ unless~$\theta = 2$.
We slightly transfer the nonlinearity in~\eqref{eq-Hold3} from~$e$ to~$f$, and
posit the inequality
\begin{equation}
\bigg|\bigg\langle \Nop_\theta(f),\frac{e}{\|e\|_{L^{\theta}(\mu)}}
\bigg\rangle_\mu\bigg|^{\frac{r}{\theta-1}}+d_{\theta,r}\inf_\alpha
\|f-\alpha e\|_{L^{\theta}(\mu)}^{r}
\le\|f\|^r_{L^\theta(\mu)},\qquad 
e,f\in L^\theta(X,\mu),\,\,\,e\ne0,
\label{eq-Hold4}
\end{equation}
where~$d_{\theta,r}\ge0$ is real.
If we again argue that we can find nontrivial functions~$e$ and~$f$ such 
that the infimum in~\eqref{eq-Hold4} is attained at~$\alpha=\alpha^{\star}=0$, 
then it is immediate that~\eqref{eq-Hold4} cannot be valid generally 
unless~$d_{\theta,r}\le1$, that is,~$d_{\theta,r}$ must be 
confined to~$0\le d_{\theta,r}\le1$.
\begin{defn}
Let~$d_{\theta,r}^\star$ denote the the largest possible 
value of~$d_{\theta,r}$ such that~\eqref{eq-Hold4} remains valid for 
any complex-valued~$f$ and~$e$.
\end{defn} 

\begin{rem}
The constants~$c_{\theta,r}^{\star}$ and~$d_{\theta,r}^{\star}$ do not depend 
on the measure space $(X,\mu)$ as long as the measure $\mu$ is continuous.
There are essentially two cases, the finite mass case and the infinite mass
$\sigma$-finite case. By normalization, the finite mass case becomes the
probability measure case, 
and all the standard probability measure spaces without point masses are 
isomorphic. The infinite mass $\sigma$-finite case is then treated as a the 
limit of the finite mass case. 
We explain some details later on in the proof of 
Proposition~\ref{CFromBell} at the end of Section~\ref{s2}. 
Although this is deferred until later, we will need it in what follows.
\end{rem}

In certain ranges of~$r$, we cannot get more than H\"older's inequality. 
We describe these restrictions in two lemmas below. 

\begin{lem}\label{rlessthanp}
If~$r < \theta$, then~$c_{\theta,r}^{\star} = d_{\theta,r}^{\star} = 0$.
\end{lem}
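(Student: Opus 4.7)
The plan is to refute inequalities \eqref{eq-Hold3} and \eqref{eq-Hold4} for any strictly positive value of $c_{\theta,r}$ or $d_{\theta,r}$ by testing against a simple one-parameter family of functions. The same construction will handle both inequalities simultaneously, because the two nonlinear pairings turn out to give the same value.

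Concretely, I would pick disjoint measurable sets $A, C \subset X$ with $\mu(A) = 1$ and $0 < \mu(C) < \infty$, and set $e := \mathbf{1}_A$, $\delta := s\,\mu(C)^{-1/\theta}\mathbf{1}_C$, and $f := e + \delta$, where $s > 0$ is a free parameter; then $\|e\|_{L^\theta(\mu)} = 1$ and $\|\delta\|_{L^\theta(\mu)} = s$. Since $e$ and $\delta$ have disjoint supports, the $L^\theta$-norm is additive in the $\theta$-th power and every cross term involving $\Nop_\theta$ vanishes:
\begin{align*}
\|f\|_{L^\theta(\mu)}^\theta &= 1 + s^\theta,\\
\langle f, \Nop_\theta(e)\rangle_\mu &= \langle \Nop_\theta(f), e\rangle_\mu = 1,\\
\|f - \alpha e\|_{L^\theta(\mu)}^\theta &= |1-\alpha|^\theta + s^\theta,
\end{align*}
and hence $\inf_\alpha \|f - \alpha e\|_{L^\theta(\mu)}^\theta = s^\theta$, attained at $\alpha = 1$. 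Plugging these values into \eqref{eq-Hold3} and \eqref{eq-Hold4} separately, both inequalities collapse to the same scalar estimate
$$1 + c\,s^r \le (1 + s^\theta)^{r/\theta},\qquad s > 0,$$
with $c$ standing for $c_{\theta,r}$ or $d_{\theta,r}$ respectively (the fact that the two different exponents $r$ and $r/(\theta-1)$ in \eqref{eq-Hold3} and \eqref{eq-Hold4} produce the same left-hand side is an arithmetic check using $\|e\|_{L^\theta(\mu)} = 1$).

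Finally, I would Taylor-expand around $s = 0$: $(1 + s^\theta)^{r/\theta} = 1 + (r/\theta)\,s^\theta + \Ordo(s^{2\theta})$, so the estimate rearranges to $c \le (r/\theta)\,s^{\theta - r} + \Ordo(s^{2\theta - r})$. Since $r < \theta$, the right-hand side tends to $0$ as $s \to 0^+$, so no strictly positive $c$ can survive, which forces $c_{\theta,r}^\star = d_{\theta,r}^\star = 0$. There is no real obstacle here: the only conceptual step is the choice of disjoint supports, which causes every quantity on both sides of \eqref{eq-Hold3} and \eqref{eq-Hold4} to evaluate in closed form; after that the proof is a routine first-order Taylor expansion.
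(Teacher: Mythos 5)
Your proof is correct and follows essentially the same route as the paper's: both perturb $e$ by a function of small $L^\theta$-norm $s$ chosen so that the pairing terms remain exactly $1$ and the infimum over $\alpha$ is computable in closed form, whence the inequality forces $c\,s^{r}\lesssim s^{\theta}$ and hence $c=0$ when $r<\theta$. The only cosmetic differences are that you place the perturbation on a set disjoint from the support of $e$ (the paper instead uses a symmetric perturbation of shrinking support inside the support of $e\equiv 1$), and that you verify \eqref{eq-Hold4} explicitly rather than just remarking that the same test pair works.
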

\begin{proof}
We consider the finite mass case, and renormalize to have mass $1$. 
For the $\sigma$-finite case, we would instead just cut off a piece of our 
measure space of mass $1$ and consider functions that vanish off that
piece. In the mass $1$ case, we apply standard probability measure space 
theory, and take $X=[0,1]$, with $\mu$ as Lebesgue measure.
Moreover, we let $e$ be the constant function $e=1$ while $f$ is given by 
the formula
\begin{equation*}
f(x) = \begin{cases}
2,\quad &x \in [0,\eps);\\
0,\quad &x \in [\eps,2\eps);\\
1,\quad &x \in [2\eps,1].
\end{cases}
\end{equation*}
Here~$\eps$ is a real parameter with $0<\epsilon<\frac12$. 
By symmetry and convexity with respect to~$\alpha$,
\begin{equation*}
\inf_\alpha \|f-e-\alpha e\|_{L^\theta}^r = 
\|f-e\|_{L^\theta}^r = (2\eps)^{\frac{r}{\theta}}.
\end{equation*}
Moreover, we observe that $\langle f,e \rangle = 1$ while
$\|f\|_{L^\theta}^\theta =\eps 2^\theta + (1-2\eps)$. 
Thus,~\eqref{eq-Hold3} leads to
\begin{equation*}
1+c_{\theta,r}(2\eps)^{\frac{r}{\theta}} \leq \Big(\eps 2^\theta 
+(1-2\eps)\Big)^{\frac{r}{\theta}}.
\end{equation*}
We subtract 1, divide by~$\eps^{\frac{r}{\theta}}$, compute the limit 
as~$\eps \to 0$ and obtain $0$ on the right-hand side if~$r < \theta$. 
The conclusion that the optimal constant is $c_{\theta,r}^\star=0$ is immediate
from this. 
The same choice of~$f$ and~$e$ also gives that $d_{\theta,r}^{\star} = 0$.
\end{proof}

\begin{lem}\label{rlessthan2}
If~$r < 2$, then~$c_{\theta,r}^{\star} = d_{\theta,r}^{\star} = 0$.
\end{lem}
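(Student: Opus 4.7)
The plan is to run a small-amplitude perturbation of the constant function and Taylor-expand both sides of \eqref{eq-Hold3} and \eqref{eq-Hold4} to second order. On $(X,\mu) = ([0,1], \diff x)$ I set $e \equiv 1$ and $f = 1 + \delta h$, where $h = \chi_{[0,1/2]} - \chi_{[1/2,1]}$ and $\delta>0$ is small. Then $f$ and $e$ are real and bounded away from zero, so every nonlinear quantity appearing is smooth in $\delta$.

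A direct computation gives
\[
\|f\|_{L^\theta}^\theta = \tfrac{1}{2}(1+\delta)^\theta + \tfrac{1}{2}(1-\delta)^\theta = 1 + \tfrac{\theta(\theta-1)}{2}\delta^2 + \Ordo(\delta^4),
\]
hence $\|f\|_{L^\theta}^r = 1 + \tfrac{r(\theta-1)}{2}\delta^2 + \Ordo(\delta^4)$. Since $f$ and $e$ are real, the infimum in $\inf_{\alpha\in\mathbb{C}}\|f-\alpha e\|_{L^\theta}$ reduces to real $\alpha$ (because $|f(x)-\alpha e(x)|^2 = (f(x)-(\re\alpha)\,e(x))^2 + (\Image\alpha)^2\, e(x)^2$ is minimised in $\Image\alpha$ at $0$), and by symmetry around $\alpha=1$ it equals $\delta^r$ exactly.

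For \eqref{eq-Hold3}, $\Nop_\theta(e) = 1$ and $\|e\|_{L^\theta}^{\theta-1} = 1$, so $\langle f, \Nop_\theta(e)/\|e\|_{L^\theta}^{\theta-1}\rangle_\mu = \int f \diff\mu = 1$. The inequality therefore collapses to
\[
1 + c_{\theta,r}\delta^r \le 1 + \tfrac{r(\theta-1)}{2}\delta^2 + \Ordo(\delta^4);
\]
dividing by $\delta^r$ and letting $\delta \to 0^+$ yields $c_{\theta,r} \le 0$ whenever $r<2$, i.e.\ $c_{\theta,r}^\star = 0$. For \eqref{eq-Hold4} the analogous computation
\[
\langle \Nop_\theta(f), e\rangle_\mu = \tfrac{1}{2}(1+\delta)^{\theta-1} + \tfrac{1}{2}(1-\delta)^{\theta-1} = 1 + \tfrac{(\theta-1)(\theta-2)}{2}\delta^2 + \Ordo(\delta^4),
\]
raised to the power $r/(\theta-1)$, produces $1 + \tfrac{r(\theta-2)}{2}\delta^2 + \Ordo(\delta^4)$. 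Subtracting this from $\|f\|_{L^\theta}^r$ leaves $d_{\theta,r}\delta^r \le \tfrac{r}{2}\delta^2 + \Ordo(\delta^4)$, and the same limit forces $d_{\theta,r}^\star = 0$.

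There is no genuine obstacle; the only subtlety worth a sentence is the reduction of the infimum to real $\alpha$, and the argument is uniform in $\theta\in(1,\infty)$, so both equalities fall out of the single perturbative computation and subsume Lemma~\ref{rlessthanp} in the range $r<2$.
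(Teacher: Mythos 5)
Your proof is correct and follows essentially the same route as the paper: perturb the constant function as $f=1+\delta h$ with a symmetric $h$, note that the infimum over $\alpha$ contributes $\delta^r$ while the remaining terms cancel to second order, and let $\delta\to 0$. The only differences are cosmetic — you fix a concrete $h$ and compute the $\delta^2$ coefficients explicitly, and you spell out the reduction of the complex infimum to real $\alpha$, which the paper leaves implicit.
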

\begin{proof}
We prove the lemma for the case of constants~$d_{\theta,r}^{\star}$ 
this time. We take as before $X=[0,1]$ and $\mu$ as Lebesgue measure.
We consider the functions $e=1$ and~$f=1+th$, where~$h$ is a bounded 
real-valued function 
with symmetric distribution (by which we mean that the functions~$h$ and~$-h$ 
have one and the same distribution) and~$t$ is a real parameter which will tend 
to zero. By the symmetry of~$h$,
\begin{equation*}
\inf_\alpha
\|f-\alpha e\|_{L^{\theta}}^{r} = \inf_{\beta}\|th - \beta e\|_{L^\theta}^r =\|th
\|_{L^\theta}^r=t^{r}\|h\|_{L^\theta}^r.
\end{equation*}
Again by the symmetry of $h$ it follows that $\langle h,e\rangle=0$, 
and we obtain as $t\to0$ that
\begin{equation*}
|\langle \Nop_\theta(f),e\rangle|^{\frac{r}{\theta-1}} = 
\Big(\langle (1+th)^{\theta-1},e\rangle\Big)^{\frac{r}{\theta-1}} = 
\bigg(1+t(\theta-1)\langle h,e\rangle + \Ordo(t^2)\bigg)^{\frac{r}{\theta-1}} 
= 1+ \Ordo(t^2)
\end{equation*}
and, similarly,
\begin{equation*}
\|f\|_{L^\theta}^r = 1+\Ordo(t^2)
\end{equation*}
as~$t$ tends to zero.
By plugging these asymptotic identities back into~\eqref{eq-Hold4}, we arrive 
at
\begin{equation*}
d_{\theta,r}^{\star}t^{r}\|h\|_{L^\theta}^r = \Ordo(t^2),
\end{equation*}
which either proves that~$r \geq 2$ or that $d_{\theta,r}^{\star}= 0$. The case 
with the constants~$c_{\theta,r}^{\star}$ is similar.
\end{proof}
 
The inequalities~\eqref{eq-Hold3} and~\eqref{eq-Hold4} tend to get sharper 
the smaller~$r$ is. Indeed, since for fixed $\gamma$ with $0\le\gamma\le1$, 
the inequality
\[
A^r+\gamma B^r\le1\quad\text{with}\quad 0\le A,B\le1, 
\]
for a fixed positive $r=r_0$ implies the same inequality for all 
$r\ge r_0$, it follows that for fixed $\theta$, the functions 
$r\mapsto c^\star_{\theta,r}$ and $r\mapsto d^\star_{\theta,r}$ grow with $r$, 
and, moreover, if one of these constants already equals $1$, then in 
\eqref{eq-Hold3} or alternatively \eqref{eq-Hold4} we should use the 
smallest possible $r$ so that this
remains true because that represents the strongest assertion.
Our Lemmas~\ref{rlessthanp} and~\ref{rlessthan2} suggest 
that the cases~$r = 2$ for~$\theta \leq 2$ and~$r = \theta$ 
for~$\theta \geq 2$ might be the most interesting. 
It appears that one may compute the constants~$c_{p,p}^{\star}$ 
and~$d_{p,p}^{\star}$ for~$p > 2$. Here are our two main results.

\begin{thm}\label{Cthm}
Suppose~$2<p<+\infty$. Then the optimal constant~$c_{p,p}^{\star}$ in the 
inequality~\eqref{eq-Hold3} may be computed as
\begin{equation*}
c_{p,p}^{\star} = (p-1)\Big(\frac{s_0}{1+s_0}\Big)^{p-2},
\end{equation*}
where~$s_0$ is the unique positive solution of the equation
\begin{equation}
\label{eqs0}
(p-1)s_0^{p-2} + (p-2)s_0^{p-1}=1.
\end{equation}

\end{thm}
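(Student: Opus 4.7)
The plan is to prove Theorem~\ref{Cthm} via the Bellman function method foreshadowed in the abstract. First, I would normalize: by the $L^p$-homogeneities of~\eqref{eq-Hold3} in both $e$ and $f$ and a rotation of $f$ by a unimodular constant, one may assume $\|e\|_{L^p(\mu)} = 1$ and $\langle f, \Nop_p(e)\rangle_\mu \ge 0$. By the Remark on the independence of the measure space, it further suffices to work on the probability space $([0,1], dx)$, and the task becomes to find the largest $c$ such that $\langle f,\Nop_p(e)\rangle_\mu^p + c\,\inf_\alpha \|f-\alpha e\|_{L^p}^p \le \|f\|_{L^p}^p$ for every admissible pair $(e,f)$.

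Next, I would set up a Bellman function of four real variables $x = (x_1, x_2, x_3, x_4)$ obtained from the data of $(e, f)$ on a subinterval $I \subseteq [0,1]$:
\[
x_1 = \av{|e|^p}{I},\quad x_2 = \av{|f|^p}{I},\quad x_3 + ix_4 = \av{|e|^{p-2}\bar e f}{I}.
\]
Define
\[
\BG(x) = \sup\Big\{\inf_\alpha \av{|f-\alpha e|^p}{I}\;:\;(e,f)\text{ realizes the averages }x\Big\}.
\]
The inequality~\eqref{eq-Hold3} with constant $c$ is equivalent to the pointwise bound $c\,\BG(x) \le x_2 - (x_3^2 + x_4^2)^{p/2}/x_1^{p-1}$ on the admissible domain. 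A direct concatenation-of-intervals argument shows $\BG$ is concave (the inner $\inf_\alpha$ is superadditive under splitting of $I$), and since the right-hand side of the target inequality is also concave in $x$ (the $L^p$-perspective function is convex), the problem reduces to a Monge--Amp\`ere-type comparison on the admissible phase space.

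To extract the explicit constant, I would locate the extremal pairs $(e, f)$. These are expected to be two-step configurations on $[0,1]$: $e$ is constant on each of two subintervals $[0,a]$ and $(a,1]$, and $f = \alpha^\star e + g$ with $g$ similarly two-step and satisfying the nonlinear orthogonality $\int_0^1 e|g|^{p-2}\bar g\,dx = 0$ which characterizes the minimizer $\alpha^\star$. Setting to zero the first-order optimality conditions in the three remaining scale-invariant parameters (the weight $a$ and the value ratios of $e$ and $g$ between the two pieces) should reduce, after algebraic simplification, to the single scalar equation $(p-1)s^{p-2} + (p-2)s^{p-1} = 1$ with $s_0$ its unique positive root; evaluation at the extremum then yields $c_{p,p}^\star = (p-1)(s_0/(1+s_0))^{p-2}$, and the same configuration, realized as an explicit extremizing family, simultaneously proves sharpness.

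The main obstacle lies in the rigorous construction of the Bellman function and the verification that the candidate --- chosen to match $c_{p,p}^\star$ on the extremal locus --- is indeed the true extremal one (not merely a sub- or super-solution) of the associated Monge--Amp\`ere equation on the full admissible domain. Because the inner infimum over $\alpha$ forces delicate behavior on the boundary (where $\alpha^\star$ coincides with values of $e$ or where $g$ vanishes on a piece), a careful case analysis is required to complete the verification.
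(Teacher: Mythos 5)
There is a genuine structural gap at the heart of your setup. You keep the inner $\inf_\alpha$ inside the Bellman function, defining $\BG(x)=\sup\{\inf_\alpha \av{|f-\alpha e|^p}{I}\}$. The superadditivity of $\inf_\alpha$ under splitting of $I$ does give concavity of $\BG$ from below, but it destroys the other half of the Bellman machinery: $\BG$ is \emph{not} the minimal concave function with prescribed data on the skeleton of its domain. Indeed, on the skeleton (where $e$ and $f$ are essentially single-valued) the pointwise value of your functional is $\inf_\alpha|f(t)-\alpha e(t)|^p=0$, yet $\BG>0$ in the interior; so $\BG$ is not determined by its boundary values, and the entire apparatus of foliations, supporting affine majorants, and ``Monge--Amp\`ere-type comparison'' that you invoke has no foothold. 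The paper's essential first move, which your plan omits, is Proposition~\ref{CFromBell}: dualize the distance via Hahn--Banach, $\inf_\alpha\|f-\alpha e\|_{L^p}=\sup\{|\langle f,g\rangle|:\|g\|_{L^q}=1,\ \langle e,g\rangle=0\}$, and then change measure to $d\tilde\mu=|e|^p\,d\mu$ with $\tilde f=f/e$, $\tilde g=ge/|e|^p$. Only after this does the problem become a pure supremum of the bilinear average $\av{\tilde f\tilde g}{}$ subject to prescribed averages, i.e.\ the function $\BellcPlus$ of Definition~\ref{Bellman}, for which minimality among concave majorants of the skeleton data (Lemma~\ref{SimpleProperties}) holds and the convex-hull/foliation description (Corollary~\ref{GeneralConvex2}) applies.

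Beyond this, your proposal defers exactly the content of the proof. The identification of the extremal configurations is not a matter of ``setting first-order conditions to zero in a two-step ansatz'': the paper must exhibit, for each chord $\ell_c(a_1,a_2,R)$, an affine function $\Psi$ whose restriction $\Phi$ to the skeleton is globally nonnegative (Lemmas~\ref{lem21}--\ref{PsiLemma}, via the auxiliary functions $\lambda$, $\rho$, $R_0$), verify that these chords actually cover the domain (Lemma~\ref{lemeta}), and then carry out a genuinely nontrivial monotonicity argument (the function $Q(t,s)$ in Section~\ref{s4}) to show the supremum over the slice $x_2=0$ is attained at $R=0$, which is where \eqref{eqs0} and the value $(p-1)(s_0/(1+s_0))^{p-2}$ emerge. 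Finally, your reduction to real-valued (or phase-normalized) data is not automatic: the paper needs the separate argument of Section~\ref{s5}, culminating in the inequality \eqref{eq2} verified chord by chord, to show $c_{p,p}^{\star}=c_{p,p}^{\star,\mathbb{R}}$. As written, your text is a research plan whose acknowledged ``main obstacle'' is the theorem itself, and the plan starts from a formulation of the Bellman function to which the method does not apply.
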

Using Taylor's formula, we may find the asymptotic expansion 
for~$c_{p,p}^{\star}$ as~$p\to 2+$:
\begin{equation*}
c_{p,p}^{\star} = 1-(p-2)\Big(-1+\log\frac{1+w}{w}\Big)+O\big((p-2)^2\big),
\end{equation*}
where~$w = W(\frac{1}{e})$. Here, $W$ denotes the Lambert-$W$ function, i.e. 
the solution of the equation~$W(z)e^{W(z)}=z$.

\begin{thm}\label{Dthm}
Let~$2<p<+\infty$. Then the optimal constant~$d_{p,p}^{\star}$ is given by 
the formula
\begin{equation*}
d_{p,p}^{\star} = (q-1)c_{p,p}^{\star}= \Big(\frac{s_0}{1+s_0}\Big)^{p-2},
\end{equation*}
where~$s_0$ is given by~\eqref{eqs0} and $q$ is dual to $p$, that is, 
$\frac{1}{p}+\frac{1}{q}=1$.
\end{thm}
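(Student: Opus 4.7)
My plan is to analyze the extremal problem defining $d^\star_{p,p}$ using the Bellman-function machinery of Theorem~\ref{Cthm}, and to produce the factor $(q-1)$ from a direct algebraic comparison at the extremizers. By the homogeneity arguments and reductions in the paper, we may normalize $\|e\|_{L^p}=1$ on $X=[0,1]$ with Lebesgue measure, so that both sharp constants are infima of quotients:
\[
c^\star_{p,p}=\inf\frac{\|f\|_{L^p}^p-\bigl|\langle f,\Nop_p(e)\rangle/\|e\|_{L^p}^{p-1}\bigr|^p}{\inf_\alpha\|f-\alpha e\|_{L^p}^p},\quad d^\star_{p,p}=\inf\frac{\|f\|_{L^p}^p-\bigl|\langle\Nop_p(f),e\rangle/\|e\|_{L^p}\bigr|^q}{\inf_\alpha\|f-\alpha e\|_{L^p}^p},
\]
where the infima are taken over non-collinear pairs $(f,e)$.

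As a motivating reality-check I would expand both numerators to second order around $f=e\equiv 1$: setting $f=1+\eps g$ and computing gives
\[
\|f\|_{L^p}^p-|\langle f,1\rangle|^p=\tfrac{p(p-1)}{2}\eps^2\var(g)+O(\eps^3),\quad \|f\|_{L^p}^p-|\langle\Nop_p(f),1\rangle|^q=\tfrac{p}{2}\eps^2\var(g)+O(\eps^3),
\]
whose ratio is $1/(p-1)=q-1$. This identifies the source of the factor $(q-1)$ claimed in the theorem. However, it does not prove the theorem directly, because the corresponding \emph{pointwise} inequality $\|f\|_{L^p}^p-|\langle\Nop_p(f),1\rangle|^q\ge(q-1)(\|f\|_{L^p}^p-|\langle f,1\rangle|^p)$ fails globally (for instance, $f=2\mathbf{1}_{[0,1/2]}$ and $p=3$ give $4-2\sqrt{2}<\tfrac{3}{2}$). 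The Euler--Lagrange equations of the two infimum problems are also structurally different, so the minimizing configurations are \emph{a priori} distinct and cannot be directly composed.

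The actual proof must therefore proceed by rerunning the Bellman-function construction of Theorem~\ref{Cthm} for the $d$-problem. Structurally the four-variable analysis is the same — averages of $f$, $\Nop_p(f)$, $e$, $|f-\alpha e|^p$ serve as coordinates — and I expect the stationarity equation along the extremal chord to again reduce to the same algebraic equation~\eqref{eqs0} for $s_0$, but with a different boundary normalization yielding the value $(s_0/(1+s_0))^{p-2}=(q-1)c^\star_{p,p}$. The upper bound $d^\star_{p,p}\le(q-1)c^\star_{p,p}$ would come from testing~\eqref{eq-Hold4} on the Bellman extremizer obtained in this way. The main obstacle will be verifying the supersolution (concavity) inequality for the modified Bellman function, since the pairing $\langle\Nop_p(f),e\rangle$ now enters through the exponent $q\in(1,2]$ rather than $p\ge2$; this forces a recomputation of the relevant Hessian that parallels but does not reduce to the one done for Theorem~\ref{Cthm}, and it is the point at which the precise value of $s_0$ coming from~\eqref{eqs0} must be used.
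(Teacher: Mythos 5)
Your heuristic identification of the factor $(q-1)$ via the second-order expansion is correct, and so is your observation that the pointwise comparison of the two numerators fails, so that $d^\star_{p,p}=(q-1)c^\star_{p,p}$ cannot come from a direct domination argument. But what you have written is a plan whose substantive steps are all deferred, and the plan as stated misses the structural fact that makes the proof feasible. You propose to ``rerun the Bellman-function construction'' for the $d$-problem and anticipate having to verify a new supersolution/concavity inequality with the exponent $q$ in place of $p$. The paper never does this. The point of Corollary~\ref{GeneralConvex2} is that $\BellcPlus,\BellcMinus$ and $\BelldPlus,\BelldMinus$ describe the boundary of one and the same convex body $\mathbb{K}\subset\mathbb{R}^5$ (the convex hull of the surface $(t_1,t_2,|t_1|^p,|t_2|^q,t_1t_2)$), read as graphs over $(x_1,x_2,x_3,x_4)$ in the first case and over $(x_1,x_3,x_4,x_5)$ in the second. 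Hence the foliation of $\BelldPlus$ is inherited verbatim from the segments already constructed for $\BellcPlus$; the only new qualitative work (Section~\ref{s6}, proving Theorem~\ref{BellmanTheorem2}) is a short sign argument deciding which segments lie on the graph of $\BelldPlus$ (those with $a_2>0$) rather than of $\BelldMinus$. No new Hessian or majorant verification is needed, and equation \eqref{eqs0} enters only at the very end. If you really intend to redo the majorization analysis from scratch for the functional $\av{g}{I}$ under constraints involving $\av{fg}{I}$, you must actually carry it out; as it stands, the assertion that ``the stationarity equation reduces to the same algebraic equation'' is an expectation, not an argument.

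Two further gaps. First, even granting the foliation, extracting the constant requires locating the point with $x_5=0$ on each extremal segment and maximizing the resulting one-parameter family $\tilde S(R)$ over $R\in[-1,0]$; this is Lemma~\ref{YetAnotherMonotonicity}, a genuinely delicate calculus argument (convexity of an auxiliary function in one variable, several monotonicity checks), and nothing in your proposal addresses it. Only there does one see that the maximum sits at $R=0$ and evaluates to $(s_0/(1+s_0))^{p-2}$; the relation $d^\star_{p,p}=(q-1)c^\star_{p,p}$ is then an a posteriori comparison of the two closed forms, not something derived structurally from the $c$-problem. Second, Theorem~\ref{Dthm} concerns complex-valued $f$ and $e$, while the Bellman functions are defined for real-valued functions; the passage from $d^{\star,\mathbb{R}}_{p,p}$ to $d^{\star}_{p,p}$ (Proposition~\ref{Complexd}, proved in Section~\ref{s8} via the complexified function $\BelldC$ and the inequality \eqref{eqn7}) is an essential component of the proof that your proposal omits entirely.
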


The case of exponent $\theta<2$ appears to be somewhat more elementary. 
At least the theorem stated below is considerably easier to obtain than the two 
theorems above.

\begin{thm}\label{Elementary}
The optimal constant~$d_{q,r}^{\star}$ is given by the formula
\begin{equation*}
d_{q,r}^{\star} = 1
\end{equation*}
when~$1<q\le2$ and $2\le r<+\infty$. 
Moreover, $d_{p,r}^{\star}=1$ holds in the range $2<p<+\infty$ if and only if
$r \geq 2(p-1)$.
\end{thm}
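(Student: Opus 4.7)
The plan is to separate Theorem~\ref{Elementary} into the ``if'' direction (inequality~\eqref{eq-Hold4} holds with $d_{\theta,r} = 1$) and the sharpness statement for $p > 2$. Thanks to the monotonicity principle noted just before the theorem, it suffices for the ``if'' direction to verify~\eqref{eq-Hold4} at the boundary exponent $r_0 := \max(2,\, 2(\theta-1))$; this is $r_0 = 2$ when $1 < \theta \le 2$ and $r_0 = 2(p-1)$ when $\theta = p \ge 2$. At both endpoints the exponent $r_0/(\theta-1)$ of the dual-pairing term is at least~$2$, which suggests an argument of Hilbertian flavor. Normalize $\|e\|_{L^\theta(\mu)} = \|f\|_{L^\theta(\mu)} = 1$ by homogeneity and let $\alpha_\ast e$ be the best $L^\theta$-approximation of $f$, characterized by the orthogonality $\langle \Nop_\theta(g), e\rangle_\mu = 0$ for $g := f - \alpha_\ast e$. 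Taking $\alpha = \alpha_\ast$ in the infimum reduces the ``if'' direction to
\[
|\langle \Nop_\theta(f), e\rangle_\mu|^{r_0/(\theta-1)} + \|g\|_{L^\theta(\mu)}^{r_0} \le 1,
\]
which for $\theta = 2$ is the classical Pythagorean identity for $f = \alpha_\ast e + g$.

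For $\theta \ne 2$ I would exploit the orthogonality to rewrite $\alpha_0 := \langle \Nop_\theta(f), e\rangle_\mu = \langle \Nop_\theta(f) - \Nop_\theta(g), e\rangle_\mu$, bound $|\alpha_0|$ by H\"older, and then invoke the sharp form of a $2$-uniform convexity (for $L^q$ with $q \in (1,2]$, constant $q-1$ in the Ball--Carlen--Lieb normalisation) or of the dual $2$-uniform smoothness (for $L^p$ with $p \ge 2$, constant $p-1$). The role of the factor $(q-1)$ or $(p-1)$ is to supply precisely the constant needed to accommodate the exponent $r_0/(\theta-1)$ on the dual-pairing term while leaving the coefficient of $\|g\|_{L^\theta}^{r_0}$ equal to~$1$.

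For the sharpness of $r \ge 2(p-1)$ when $p > 2$, I would use the symmetric Bernoulli-type test $e \equiv 1$, $f = (1+c)\,\mathbf{1}_{[0,1/2]} + (1-c)\,\mathbf{1}_{[1/2,1]}$ on $[0,1]$ with Lebesgue measure, and let $c \to +\infty$. By symmetry $\alpha_\ast = 1$ exactly, so $g = f - e$ is the $\pm c$-Bernoulli function with $\|g\|_{L^p} = c$. Direct asymptotic expansion yields
\[
\langle \Nop_p(f), e\rangle_\mu = (p-1)\,c^{p-2} + O(c^{p-4}), \qquad \|f\|_{L^p(\mu)}^{r} = c^{r} + \tfrac{r(p-1)}{2}\,c^{r-2} + O(c^{r-4}).
\]
Inserting these into~\eqref{eq-Hold4}, the leading terms $c^r$ cancel between the two sides; the next-order balance pits $(p-1)^{r/(p-1)}\,c^{(p-2)r/(p-1)}$ (from the dual-pairing term) against $\tfrac{r(p-1)}{2}\,c^{r-2}$ (from $\|f\|^r$). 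The exponents $(p-2)r/(p-1)$ and $r-2$ agree precisely when $r = 2(p-1)$, at which value the two coefficients also coincide (both equal $(p-1)^2$), so the inequality holds with asymptotic equality along this family. For $r < 2(p-1)$ the former exponent strictly exceeds the latter, the left-hand side dominates for large $c$, and~\eqref{eq-Hold4} with $d = 1$ fails, giving $d_{p,r}^{\star} < 1$.

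The hardest step is the sharp boundary inequality in the second paragraph, obtaining the constant $1$ rather than some smaller positive number. The sharp $2$-convexity/smoothness constants $q-1$ or $p-1$ for $L^\theta$ must interlock precisely with the orthogonality $\langle \Nop_\theta(g), e\rangle_\mu = 0$; verifying this interlocking is the technical core of the argument. The extremal Bernoulli family identified in the sharpness direction serves as a useful guide for which inequality to chase and where its equality is asymptotically attained.
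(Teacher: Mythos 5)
Your overall skeleton (reduce to the boundary exponents $r_0=2$ for $\theta\le 2$ and $r_0=2(p-1)$ for $p>2$ by the monotonicity of $r\mapsto d^\star_{\theta,r}$, then prove the boundary inequality and disprove it below the boundary) matches the paper, and your sharpness argument is a correct alternative to the paper's: where you expand along the Bernoulli family $f=1+ch$, $c\to\infty$, and compare the exponents $(p-2)r/(p-1)$ and $r-2$, the paper instead proves a duality lemma (via Hahn--Banach and the substitution $F=\Nop_p(f)$) showing that $d^\star_{p,r(p-1)}=1$ is equivalent to $d^\star_{q,r}=1$, and then imports Lemma~\ref{rlessthan2} ($d^\star_{q,r}=0$ for $r<2$). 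Your computation is right (I checked the asymptotics), and it is arguably more self-contained; the duality lemma, however, also lets the paper deduce the entire $q\le 2$ case for free from the $p>2$ case, which you do not get.

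The genuine gap is in the positive direction. For the boundary inequality
\[
\big|\langle \Nop_p(f),e\rangle\big|^{2}+\inf_\alpha\|f-\alpha e\|_{L^p}^{2p-2}\le\|f\|_{L^p}^{2p-2}
\]
you only offer a plan --- ``bound $|\alpha_0|$ by H\"older and invoke sharp $2$-uniform convexity/smoothness with constant $q-1$ or $p-1$'' --- and you yourself flag that verifying how these constants ``interlock'' with the orthogonality $\langle\Nop_\theta(g),e\rangle=0$ is the technical core. That core is exactly what is missing, and it is not clear the Ball--Carlen--Lieb inequalities produce the required structure at all: they compare $\|f+h\|^2+\|f-h\|^2$ with $2\|f\|^2+2(p-1)\|h\|^2$, whereas here one must relate the \emph{difference} $\|f\|^{2p-2}-\|g\|^{2p-2}$ to the square of a dual pairing. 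The paper's actual argument is different and elementary: set $F(t)=\|f+te\|_{L^p}^{2p-2}-|\langle\Nop_p(f+te),e\rangle|^2$ and compute $F'(t)=2(p-1)\langle\Nop_p(f+te),e\rangle\big(\|f+te\|_{L^p}^{p-2}-\int|f+te|^{p-2}e^2\big)$; the second factor is nonnegative by H\"older and the first is increasing in $t$ with its unique zero at the best-approximation parameter, so $F$ is minimized there, where it equals $\|f+\alpha^\star e\|_{L^p}^{2p-2}$, and $F(0)\ge F(\alpha^\star)$ is the claim. You would need to either carry out this (or an equivalent) computation or genuinely derive the inequality from uniform convexity; as written the central inequality is asserted, not proved. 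A secondary omission: you work only with real-valued functions and real $\alpha$, while the statement concerns complex-valued $f,e$; the paper handles this by rotating $e$ so that $\langle\Nop_p(f),e\rangle$ is real and rerunning the monotonicity argument with $\re\langle\Nop_p(f+te),e\rangle$.
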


Our method allows us to compute the constants~$c_{\theta,r}^{\star}$ 
and~$d_{\theta,r}^{\star}$ for the case of arbitrary~$r$ and~$p$. However, 
the answer does not seem to have a short formulation. At least, we 
provide sharp constants for all endpoint cases, and also indicate the 
domain where~$d_{p,r}^{\star} = 1$. Figure~\ref{fig:cprdpr} shows two diagrams
which illustrate what we know about the optimal constants $c_{\theta,r}^{\star}$ 
and~$d_{\theta,r}^{\star}$.

\begin{center}
\begin{figure}
\includegraphics[width=7cm]{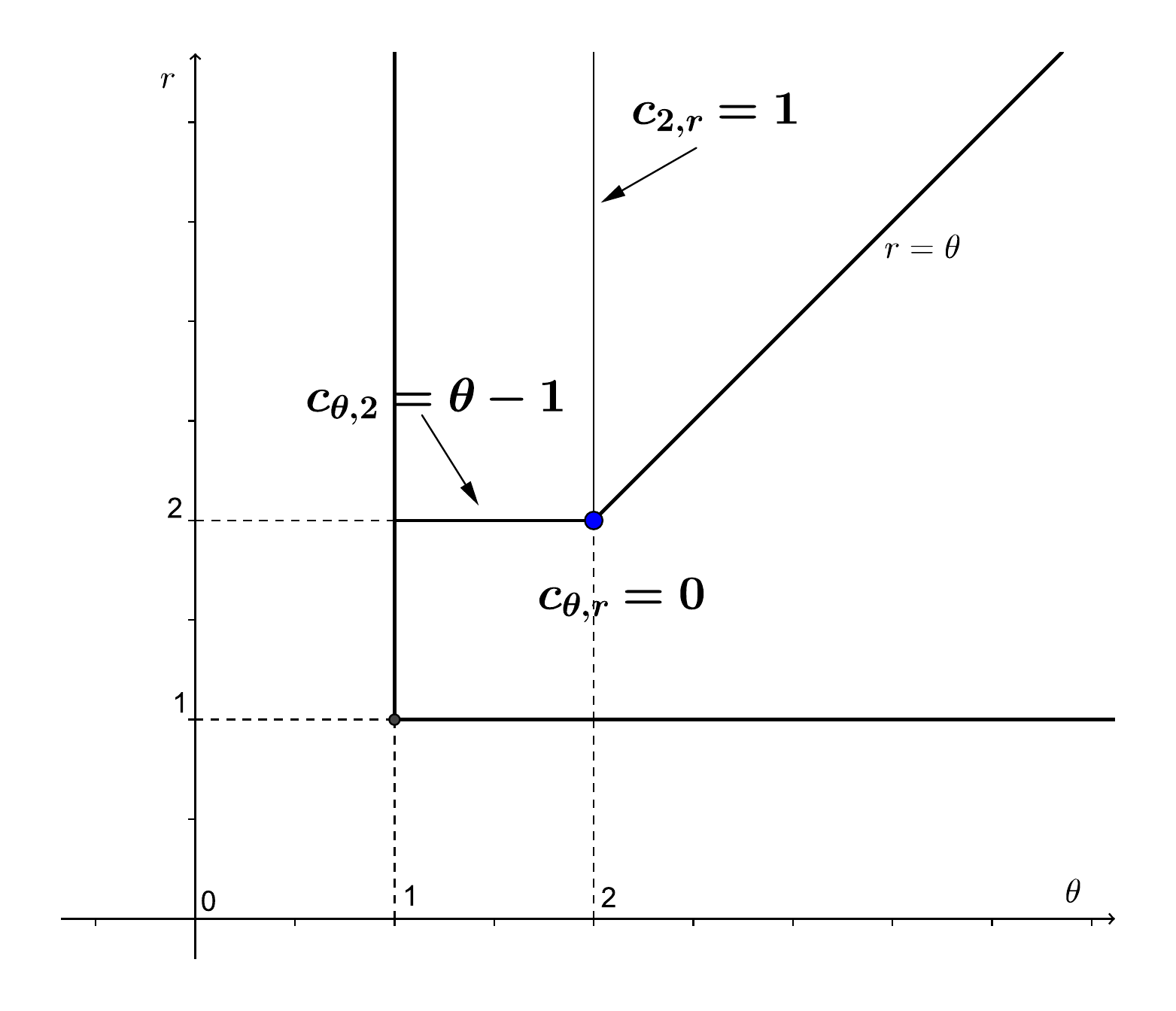}
\includegraphics[width=7cm]{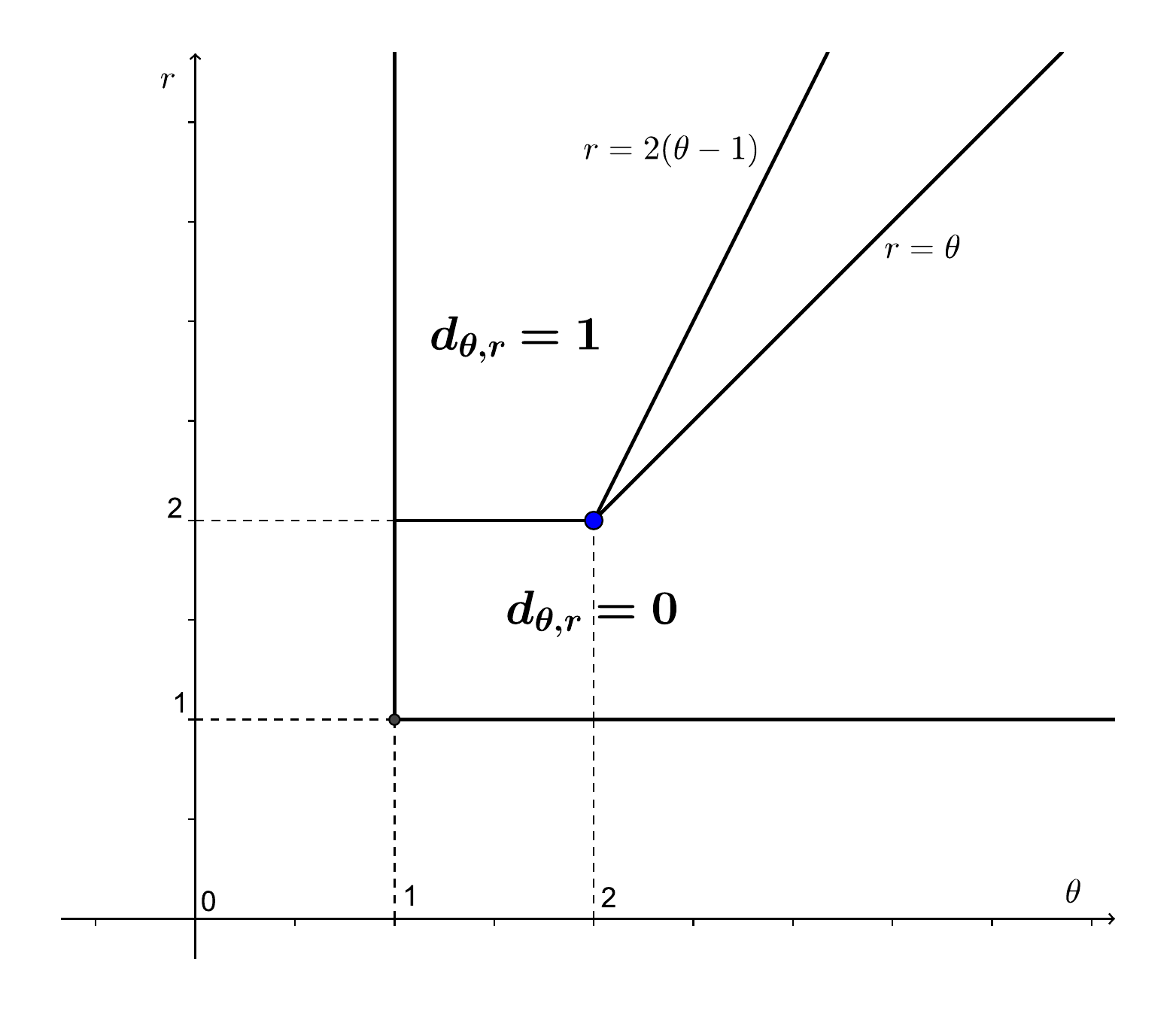}
\caption{Values of $c_{\theta,r}^{\star}$ and $d_{\theta,r}^{\star}$.}
\label{fig:cprdpr}
\end{figure}
\end{center}
 
\begin{rem}
(a) Our results sharpen H\"older's inequality. 
There is a constant interest in sharper forms of the classical inequalities, 
for which the optimizers have already been described. Such sharpenings 
may be viewed as \emph{stability results}: the new inequality says that if the 
equality almost holds, then the functions are close to the optimizers. 
See~\cite{Christ1} for the Hausdorff-Young and Young's convolutional 
inequalities,~\cite{Christ2} for the Riesz-Sobolev inequality,~\cite{BO} 
for various martingale inequalities, and~\cite{Carlen} for Sobolev-type 
embedding theorems. The latter paper also suggests a theoretical approach 
to the stability  phenomenon. The list of references is far from being 
complete.

\noindent (b) The stability of Minkowski's inequality (which is closely 
associated with H\"older's inequality)  is related to the 
notion of uniform convexity of Lebesgue spaces introduced by 
Clarkson~\cite{Clarkson}. 
He also found the sharp estimates for modulus of convexity of~$L^{\theta}$ 
in the case~$\theta \geq 2$. The sharp form of the uniform convexity 
inequalities for~$\theta < 2$ was given by Hanner's inequalities which were
first obtained by Beurling, see the classical paper of Hanner~\cite{Hanner} 
as well as a more modern exposition~\cite{ISZ}. The technique of 
the latter paper is very close to what we are using in our work. We should 
also mention that it is well-known that the notion of uniform convexity 
may be extended to Schatten classes in place of the $L^p$ spaces, and 
moreover, sharp results such as Hanner's inequality might be available in 
this more general setting (see~\cite{BCL}). It is interesting to ask whether 
something of this sort is available also for the sharper forms of H\"older's 
inequalities.

\noindent (c) In some applications of H\"older's inequality, the instance
of exponent $p=2$ might not be applicable but for instance $p>2$ close to $2$
is. A case in point is the paper by Baranov and Hedenmalm \cite{BarHed}.
It would be of interest to see what the sharpened forms derived here will 
be able to lead to in terms of strengthened results in that context. 
\end{rem}

\subsection{Acknowledgement}
Theorem 1.6 and the material of Sections 2-3 is supported by Russian
Science Foundation grant 14-21-00035, while Theorems 1.7, 1.8 and the 
material of Sections 4-9 is supported by Russian Science Foundation 
grant 14-41-00010. 

\subsection{Bellman function}
\label{s14}
For a measurable subset~$E$ of an interval~$I$ and a summable f
unction~$f\colon I \to \mathbb{C}$, we denote the average of~$f$ 
over~$E$ by~$\av{f}{E} = |E|^{-1}\int_E f(s)\,\diff m(s)$. The symbol~$m$ 
denotes the Lebesgue measure and~$|E| = m(E)$ by definition.

\begin{defn}
The constant~$c_{\theta,r}^{\star,\mathbb{R}}$ is the largest possible 
constant~$c_{\theta,r}$ such that the inequality~\eqref{eq-Hold3} holds 
true for all real-valued functions~$f$ and~$e$. The 
constant~$d_{\theta,r}^{\star,\mathbb{R}}$ is the largest possible 
constant~$d_{\theta,r}$ such that the inequality~\eqref{eq-Hold4} holds true for all real-valued functions~$f$ and~$e$.
\end{defn}
We will express the constants~$c_{\theta,r}^{\star,\mathbb{R}}$ 
and~$d_{\theta,r}^{\star,\mathbb{R}}$ in terms of two Bellman functions, 
which are very similar. We introduce the main one.

\begin{defn}\label{Bellman}
Let~$I\subset \mathbb{R}$ be a finite interval. 
Consider the function~$\BellcPlus\colon \mathbb{R}^4\to \mathbb{R}$,
\begin{equation*}
\BellcPlus(x_1,x_2,x_3,x_4) = \sup\Big\{\av{fg}{I}\,\Big|\;f,g 
\hbox{ real-valued, } \av{f}{I}=x_1, 
\av{g}{I}=x_2, \av{|f|^{\theta}}{I}=x_3, \av{|g|^{\theta'}}{I}=x_4\Big\}.
\end{equation*}
\end{defn}

\begin{prop}\label{CFromBell}
For any~$\theta$ and any~$r$, we have that
\begin{equation*}
c_{\theta,r}^{\star,\mathbb{R}} = \bigg(\sup_{x_1 \in (-1,1)} 
\frac{\BellcPlus^r(x_1,0,1,1)}{1 - |x_1|^{r}}\bigg)^{-1}.
\end{equation*}
\end{prop}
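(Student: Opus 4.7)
The plan is to translate the extremal problem defining $c_{\theta,r}^{\star,\mathbb{R}}$ into the language of $\BellcPlus$ via duality and a change of variables. First I invoke the isomorphism remark to reduce to $(X,\mu)=([0,1],m)$ with $m$ the Lebesgue measure, so that the averages in Definition~\ref{Bellman} become integrals. By the $r$-homogeneity of~\eqref{eq-Hold3} separately in $f$ and in $e$, we may normalize $\|e\|_{L^\theta}=\|f\|_{L^\theta}=1$; then $g_0:=\Nop_\theta(e)$ has $\|g_0\|_{L^{\theta'}}=1$, and $x_1:=\langle f,g_0\rangle$ lies in $[-1,1]$ by H\"older. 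The case $|x_1|=1$ forces $f\propto e$ and makes~\eqref{eq-Hold3} trivial, so from now on $x_1\in(-1,1)$.

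Next I dualize the projection term: the standard formula for the quotient norm on $L^\theta/\langle e\rangle$ (equivalently, a minimax argument noting that any $g$ with $\langle e,g\rangle\ne0$ drives $\inf_\alpha\langle f-\alpha e,g\rangle$ to $-\infty$) gives
\begin{equation*}
\inf_\alpha\|f-\alpha e\|_{L^\theta}=\sup\bigl\{\langle f,g\rangle\,:\,\|g\|_{L^{\theta'}}\le1,\ \langle e,g\rangle=0\bigr\}.
\end{equation*}
Consequently $c_{\theta,r}^{\star,\mathbb{R}}=\inf_{x_1\in(-1,1)}(1-|x_1|^r)/M(x_1)^r$, where $M(x_1)$ denotes the supremum of $\langle f,g\rangle$ taken over triples $(e,f,g)$ subject to $\|e\|_{L^\theta}=\|f\|_{L^\theta}=1$, $\|g\|_{L^{\theta'}}\le1$, $\langle e,g\rangle=0$, and $\langle f,\Nop_\theta(e)\rangle=x_1$. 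Thus the proposition reduces to the identification $M(x_1)=\BellcPlus(x_1,0,1,1)$.

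The inequality $M(x_1)\ge\BellcPlus(x_1,0,1,1)$ is immediate: plug in $e\equiv1$, so that $\Nop_\theta(e)\equiv1$ and the constraints collapse to those defining $\BellcPlus$. The reverse inequality is the heart of the argument, and I obtain it by eliminating $e$. Set $\nu:=|e|^\theta\,dm$, a probability measure on $[0,1]$ with no atoms, and put $U:=f/e$, $V:=g/\Nop_\theta(e)$ (defined $\nu$-a.e.\ on $\{e\ne0\}$). Using $\Nop_\theta(e)=|e|^{\theta-1}\sign(e)$ together with $(\theta-1)\theta'=\theta$, one verifies
\begin{equation*}
\|U\|_{L^\theta(\nu)}=1,\ \|V\|_{L^{\theta'}(\nu)}\le1,\ \int U\,d\nu=x_1,\ \int V\,d\nu=0,\ \int UV\,d\nu=\langle f,g\rangle.
\end{equation*}
Transporting via a measure-isomorphism $([0,1],\nu)\cong([0,1],m)$ deposits $(U,V)$ in the admissible set for $\BellcPlus(x_1,0,1,1)$, whence $\langle f,g\rangle\le\BellcPlus(x_1,0,1,1)$; the mismatch between $\av{|g|^{\theta'}}{I}=1$ in Definition~\ref{Bellman} and $\le1$ here is harmless because the Bellman sup is plainly attained on the boundary.

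The principal technical obstacle is the degenerate case where $e$ vanishes on a set $E\subset[0,1]$ of positive $m$-measure: the substitutions above break down, and one finds $\|U\|_{L^\theta(\nu)}^\theta=1-\int_E|f|^\theta\,dm$ rather than $=1$. I plan to handle this either by approximating $e$ with $e+\delta\phi$ for a fixed positive $\phi$ and passing $\delta\to0^+$ using continuity of $\BellcPlus$ in its arguments, or directly, by observing that $\Nop_\theta(e)=0$ on $E$ decouples $f|_E$ from both the bilinear form and the distance term on $E^c$, so one may run the above reduction on $E^c$ and absorb the contribution of $f|_E$ to $\|f\|_{L^\theta}^r$. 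The $\sigma$-finite case reduces to the finite-mass case by truncation as already indicated in the paper's remark preceding Lemma~\ref{rlessthanp}.
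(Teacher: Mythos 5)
Your argument is correct and follows essentially the same route as the paper: dualize the distance term via Hahn--Banach into a supremum over $g$ with $\|g\|_{L^{\theta'}}\le 1$ and $\langle e,g\rangle=0$, then transfer everything to the probability measure $|e|^{\theta}\,dm$ via $U=f/e$ and $V=ge/|e|^{\theta}$, which are exactly the paper's $\tilde f$ and $\tilde g$. The only difference is that you spell out the degenerate case $m(\{e=0\})>0$ (and the $\le 1$ versus $=1$ normalization of $g$), which the paper dispatches with a ``without loss of generality''.
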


\begin{defn}
Let~$I\subset \mathbb{R}$ be a finite interval. 
Consider the function~$\BelldPlus\colon \mathbb{R}^4\to \mathbb{R}$,
\begin{equation*}
\BelldPlus(x_1,x_3,x_4,x_5) = \sup\Big\{\av{g}{I}\,\Big|\;f,g 
\hbox{ real-valued, }  \av{f}{I}=x_1, \av{|f|^{\theta}}{I}=x_3, 
\av{|g|^{\theta'}}{I}=x_4, 
\av{fg}{I}=x_5\Big\}.
\end{equation*}
\end{defn}

\begin{rem}
The function~$\BelldPlus$ depends on four variables~$x_1,x_3,x_4,x_5$. 
Though such a choice of variables might seem strange, it will appear to 
be very natural later. In particular, it makes the link 
between~$\BelldPlus$ and~$\BellcPlus$ more transparent, see 
Corollary~\ref{GeneralConvex2} below.
\end{rem}

\begin{prop}\label{DFromBell}
For any~$\theta$ and~$r$,
\begin{equation*}
d_{\theta,r}^{\star,\mathbb{R}} = \bigg(\sup_{x_1 \in (-1,1)} 
\frac{\BelldPlus^r(x_1,1,1,0)}{1 - |x_1|^{\frac{\theta'r}{\theta}}}\bigg)^{-1}.
\end{equation*}
\end{prop}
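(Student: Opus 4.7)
Proof plan. I follow the template of what must be the proof of Proposition~\ref{CFromBell}, but with an essential twist: rather than reducing~\eqref{eq-Hold4} to the case of constant $e$, I reduce to the case of constant $f=1$ via a change of measure driven by $|f|^\theta$. The resulting one-function inequality is then the Lagrange--dual form of the definition of $\BelldPlus$.

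\textit{Step 1 (change of measure).} For real-valued $f,e\in L^\theta(X,\mu)$ on an atomless probability space, assume $f\ne 0$ a.e.\ (the vanishing set of $f$ can be handled by the perturbation $f\rightsquigarrow f+\varepsilon h$ with a bounded $h$ and $\varepsilon\to 0$, under which all quantities in~\eqref{eq-Hold4} are continuous). Set
\[
d\nu:=|f|^\theta\,d\mu/\|f\|_\theta^\theta,\qquad \varphi:=e/f\in L^\theta(X,\nu).
\]
Using $f=\sign(f)|f|$ and $\Nop_\theta(f)=\sign(f)|f|^{\theta-1}$, a direct pointwise computation yields
\[
\langle \Nop_\theta(f),e\rangle_\mu=\|f\|_\theta^\theta\!\int\!\varphi\,d\nu,\quad \|e\|_\theta=\|f\|_\theta\|\varphi\|_{L^\theta(\nu)},\quad \|f-\alpha e\|_\theta^\theta=\|f\|_\theta^\theta\|1-\alpha\varphi\|_{L^\theta(\nu)}^\theta.
\]
Substituting into~\eqref{eq-Hold4}, dividing by $\|f\|_\theta^r$, and rescaling so that $\|\varphi\|_{L^\theta(\nu)}=1$, the inequality becomes
\[
\Big|\!\int\!\varphi\,d\nu\Big|^{r/(\theta-1)}+d_{\theta,r}\inf_\alpha\|1-\alpha\varphi\|_{L^\theta(\nu)}^r\le 1.
\]
As $(f,e)$ ranges over admissible pairs, $\varphi$ ranges over all real unit-norm functions in $L^\theta$ of any atomless probability space (taking $f\equiv 1$ and $e=\varphi$ on $([0,1],dm)$ recovers any prescribed $\varphi$); by the standard measure-space isomorphism we may take $(X,\nu)=([0,1],dm)$. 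Hence
\[
(d_{\theta,r}^{\star,\mathbb{R}})^{-1}=\sup_{\varphi:\|\varphi\|_\theta=1}\frac{\inf_\alpha\|1-\alpha\varphi\|_\theta^r}{1-|\av{\varphi}{I}|^{r/(\theta-1)}}.
\]

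\textit{Step 2 (recognizing $\BelldPlus$).} For fixed $x_1\in(-1,1)$, let $M(x_1):=\sup\{\inf_\alpha\|1-\alpha\varphi\|_\theta:\av{\varphi}{I}=x_1,\|\varphi\|_\theta=1\}$. By $L^\theta$--$L^{\theta'}$ duality, $\|1-\alpha\varphi\|_\theta=\sup_{\|g\|_{\theta'}\le 1}\av{(1-\alpha\varphi)g}{I}$; the minimax swap of $\inf_\alpha$ with $\sup_g$ admits the explicit witness $g^*=\Nop_\theta(1-\alpha^*\varphi)/\|1-\alpha^*\varphi\|_\theta^{\theta-1}$ (the first-order condition $\int\Nop_\theta(1-\alpha^*\varphi)\varphi=0$ gives $\av{\varphi g^*}{I}=0$ and $\av{g^*}{I}=\|1-\alpha^*\varphi\|_\theta$), so
\[
\inf_\alpha\|1-\alpha\varphi\|_\theta=\sup\{\av{g}{I}:g\in L^{\theta'},\|g\|_{\theta'}\le 1,\av{\varphi g}{I}=0\}.
\]
Merging the two suprema, pushing $\|g\|_{\theta'}$ to the boundary (linearity in $g$), and renaming $\varphi\to f$ gives $M(x_1)=\BelldPlus(x_1,1,1,0)$. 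Taking the sup in $x_1$ and using $r/(\theta-1)=\theta'r/\theta$ concludes the proof.

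\textit{Main obstacle.} The heart of the argument is Step~1; the three identities are routine but must be assembled carefully, and the reduction to $f\ne 0$ a.e.\ followed by the observation that the forward map $(f,e)\mapsto\varphi$ is onto the unit sphere of any atomless $L^\theta$ is what lets the Bellman function, defined on~$[0,1]$, capture the full strength of the inequality on arbitrary measure spaces. A lesser technical point is the minimax swap in Step~2, which here has an explicit Lagrange-multiplier witness and so avoids invoking a general minimax theorem.
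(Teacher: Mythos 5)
Your proof is correct and is essentially the paper's own argument (the paper merely remarks that the proof is ``completely similar'' to that of Proposition~\ref{CFromBell}): a change of measure with density $|f|^\theta$ reducing to an atomless probability space, Hahn--Banach duality converting $\inf_\alpha$ into a supremum over $g$ with $\av{\varphi g}{I}=0$, and identification of the result with $\BelldPlus(x_1,1,1,0)$, with $r/(\theta-1)=\theta'r/\theta$. The one loose end --- replacing the constraint $\|g\|_{\theta'}\le 1$ by $\av{|g|^{\theta'}}{I}=1$ --- is immediate from the homogeneity $\BelldPlus(x_1,1,x_4,0)=x_4^{1/\theta'}\BelldPlus(x_1,1,1,0)$ together with the non-negativity of $\BelldPlus(x_1,1,1,0)$.
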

We will compute the functions~$\BellcPlus$ and~$\BelldPlus$ 
for~$\theta=p \geq 2$. The answer is rather complicated, so we will 
state it slightly later (see Theorems~\ref{BellmanTheorem} 
and~\ref{BellmanTheorem2} below). These 
functions are solutions of specific minimization problems on 
subdomains of~$\mathbb{R}^4$, they allow geometric interpretation.

The computation of the constants~$c_{p,p}^{\star,\mathbb{R}}$ 
and~$d_{p,p}^{\star,\mathbb{R}}$ leads to the proof of 
Theorems~\ref{Cthm} and~\ref{Dthm} via the propositions below.
\begin{prop}\label{Complexc}
For any~$\p \in (1,\infty)$ and any~$r$ we 
have~$c_{\p,r}^{\star,\mathbb{R}} = c_{\p,r}^{\star}$.
\end{prop}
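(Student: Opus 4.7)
The inequality $c_{\theta,r}^{\star} \leq c_{\theta,r}^{\star,\mathbb R}$ is trivial because any $c$ valid in~\eqref{eq-Hold3} for all complex pairs is valid for the real subclass. The content of the proposition is the reverse direction, namely that $c = c_{\theta,r}^{\star,\mathbb R}$ suffices also for every complex pair.

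Given a complex pair $(e,f)\in L^\theta(X,\mu)^2$ with $e\ne 0$, I would first perform a \emph{pointwise phase reduction} for $e$. Writing $e(x) = |e(x)|e^{i\psi(x)}$ and setting $\tilde e := |e|$, $\tilde f(x) := e^{-i\psi(x)} f(x)$, one readily verifies
\begin{equation*}
\|\tilde e\|_{L^\theta} = \|e\|_{L^\theta},\quad
\|\tilde f\|_{L^\theta} = \|f\|_{L^\theta},\quad
\langle \tilde f, \Nop_\theta(\tilde e)\rangle = \langle f, \Nop_\theta(e)\rangle,
\end{equation*}
and $\tilde f - \alpha\tilde e = e^{-i\psi}(f - \alpha e)$ pointwise for every scalar $\alpha$, so that $\|\tilde f - \alpha\tilde e\|_{L^\theta} = \|f - \alpha e\|_{L^\theta}$ as well. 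Hence we may assume $e \geq 0$ is real-valued. A further global phase rotation of $f$ makes $\langle f, \Nop_\theta(e)\rangle$ real and non-negative; writing $f = u + iv$ then yields $\int v\, e^{\theta-1}\, d\mu = 0$.

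The remaining task is to pass from the complex $f$ to a real-valued function. A naive replacement $f \leadsto u$ or a decomposition into $u$ and $v$ is insufficient, because for $\theta\ne 2$ the complex $L^\theta$-norm $\|f\|^\theta = \int(u^2+v^2)^{\theta/2}\, d\mu$ and the infimum $\inf_{\alpha\in\mathbb C}\|f-\alpha e\|$ compare with their $u, v$-components in opposite directions, so summing the real inequalities for $u$ and $v$ does not close the required bound. I plan to work through the Bellman-function framework of Section~\ref{s14}. A parallel construction for complex-valued $f, g$ on $I$ with objective $\re\av{f\bar g}{I}$ in place of $\av{fg}{I}$ produces a Bellman function $\BellcC$ and a version of Proposition~\ref{CFromBell} giving
\begin{equation*}
c_{\theta,r}^{\star} = \bigg(\sup_{x_1\in(-1,1)}\frac{\BellcC^{\,r}(x_1, 0, 1, 1)}{1 - |x_1|^r}\bigg)^{-1}.
\end{equation*}
The proposition then reduces to the identity $\BellcC(x_1, 0, 1, 1) = \BellcPlus(x_1, 0, 1, 1)$ for $x_1\in(-1, 1)$.

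To prove this identity, I would symmetrize the complex extremizers $(f,g)$ on $I$ by a combined rotate-shift-and-rescale. For $f = u + iv$, the substitute $f''(x) := c\bigl(u(x)\cos\omega + v(x)\sin\omega - b\bigr)$ is real-valued; the three real parameters $(b, c, \omega)$ may be chosen by an intermediate-value argument so that $\av{f''}{I} = x_1$ and $\av{|f''|^\theta}{I} = 1$, leaving one degree of genuine freedom. An analogous reduction applies to $g$: the constraint $\av{g}{I} = 0$ eliminates the shift contribution from the objective $\re\av{f''\bar g''}{I}$, so that a careful use of the remaining $(\omega, b)$ freedom makes this objective no smaller than the original $\re\av{f\bar g}{I}$. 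The hard part will be the joint selection of these parameters for $f$ and $g$ simultaneously, together with a rigorous setup of the complex Bellman-function framework and its analogue of Proposition~\ref{CFromBell}; in particular, one must handle the degenerate case in which $u$ is nearly constant, where the parameter $\omega$ (rather than a pure shift-and-scale) is essential.
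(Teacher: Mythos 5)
Your setup is sound as far as it goes: the easy inequality $c_{\theta,r}^{\star}\leq c_{\theta,r}^{\star,\mathbb{R}}$, the pointwise phase reduction of $e$, and the passage to the complex Bellman function $\BellcC$ with the complex analogue of Proposition~\ref{CFromBell} all match the paper's strategy, and the reduction to the identity $\BellcC(x_1,0,1,1)=\BellcPlus(x_1,0,1,1)$ is exactly the right target. But the proof stops precisely where the real work begins. Your plan for establishing that identity --- a rotate-shift-rescale of near-extremizers $f=u+iv$ and $g$, with parameters chosen by an intermediate-value argument --- is not yet an argument, and it is not clear it can be made into one: you must satisfy four coupled moment constraints ($\av{f''}{I}$, $\av{|f''|^\theta}{I}$, $\av{g''}{I}$, $\av{|g''|^{\theta'}}{I}$) while keeping $\re\av{f''\bar g''}{I}$ from decreasing; the supremum defining $\BellcC$ need not be attained; and the shift $b$ perturbs $\av{|f''|^\theta}{I}$ and the objective simultaneously in a way a one-parameter intermediate-value argument cannot disentangle. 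You explicitly defer this ``hard part,'' so the proposal has a genuine gap at its central step.

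The paper closes this gap without touching extremizers at all. Since $(z_1,z_2,x_3,x_4)\mapsto\BellcPlus(\re z_1,\re z_2,x_3,x_4)$ is concave on $\OmegacC$ and $\BellcC$ is the minimal concave function with boundary data $\re(z_1\bar z_2)$ on the skeleton, the identity $\BellcC=\BellcPlus$ (in particular at $(x_1,0,1,1)$) follows once one verifies the skeleton inequality
\begin{equation*}
\BellcPlus(x_1,x_2,x_3,x_4)\ \geq\ x_1x_2+\sqrt{\big(x_3^{2/p}-x_1^2\big)\big(x_4^{2/q}-x_2^2\big)},
\end{equation*}
the right-hand side being the largest value of $\re(z_1\bar z_2)$ with $\re z_1$, $\re z_2$, $|z_1|$, $|z_2|$ fixed. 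Verifying this requires the explicit formula for $\BellcPlus$ from Theorem~\ref{BellmanTheorem}: it is checked chord by chord along the foliation $\ell_c(a_1,a_2,R)$, which occupies the bulk of Section~\ref{s5} (the functions $S_1$, $S_2$, the convexity of $S(R,\cdot)$ in $\tau$, and the monotonicity facts such as Lemma~\ref{Lem52}). Some substitute for this computation must appear in any complete proof; your proposal currently contains none.
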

\begin{prop}\label{Complexd}
For any~$\p \in (1,\infty)$ and any~$r$ we 
have~$d_{\p,r}^{\star,\mathbb{R}} = d_{\p,r}^{\star}$.
\end{prop}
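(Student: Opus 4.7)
The inequality $d_{\theta,r}^{\star} \le d_{\theta,r}^{\star,\mathbb{R}}$ is immediate: real-valued functions form a subclass of complex ones, so any $d$ for which \eqref{eq-Hold4} holds universally is automatically admissible in the real-valued case. The content of the proposition is the reverse bound $d_{\theta,r}^{\star,\mathbb{R}} \le d_{\theta,r}^{\star}$. The plan is to lift any complex-valued pair $(f,e)$ on $(X,\mu)$ to a real-valued pair $(F,E)$ on an enlarged measure space so that applying \eqref{eq-Hold4} with the best real constant to $(F,E)$ transfers back to \eqref{eq-Hold4} for $(f,e)$.

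Fix complex-valued $f, e \in L^{\theta}(X,\mu)$ with $e \ne 0$. The quantities $|\langle\Nop_{\theta}(f),e\rangle_{\mu}|$, $\|e\|_{L^{\theta}(\mu)}$ and $\inf_{\alpha \in \mathbb{C}}\|f-\alpha e\|_{L^{\theta}(\mu)}$ are all invariant under the substitution $e \mapsto \lambda e$ for any unimodular $\lambda$, so we normalize so that
\begin{equation*}
\langle \Nop_{\theta}(f), e\rangle_{\mu} \in [0,+\infty).
\end{equation*}
Let $\mathbb{T}$ denote the unit circle equipped with normalized Haar measure $\diff m$, and introduce on the (non-atomic, $\sigma$-finite, hence admissible) product space $(X\times\mathbb{T}, \mu\times m)$ the real-valued functions
\begin{equation*}
F(x,\zeta) := \re(f(x)\bar\zeta), \qquad E(x,\zeta) := \re(e(x)\bar\zeta).
\end{equation*}
Setting $\kappa_{\theta} := \frac{1}{2\pi}\int_0^{2\pi}|\cos t|^{\theta}\diff t$, the one-variable identities $\int_0^{2\pi}|\cos s|^{\theta-2}\cos^{2} s\,\frac{\diff s}{2\pi} = \kappa_{\theta}$ and $\int_0^{2\pi}|\cos s|^{\theta-2}\cos s\sin s\,\frac{\diff s}{2\pi} = 0$ (the latter by odd symmetry) give, upon integration in $\zeta$ fiberwise,
\begin{align*}
\|F-\alpha E\|_{L^{\theta}(\mu\times m)}^{\theta} &= \kappa_{\theta}\|f-\alpha e\|_{L^{\theta}(\mu)}^{\theta} \quad \text{for every real } \alpha,\\
\langle \Nop_{\theta}(F),E\rangle_{\mu\times m} &= \kappa_{\theta}\re\langle\Nop_{\theta}(f),e\rangle_{\mu} = \kappa_{\theta}\,|\langle\Nop_{\theta}(f),e\rangle_{\mu}|.
\end{align*}

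Now apply \eqref{eq-Hold4} with constant $d := d_{\theta,r}^{\star,\mathbb{R}}$ to the real-valued pair $(F,E)$. Using the duality relation $(\theta-1)\theta'=\theta$ in the exponent on the first term, every side of the resulting inequality carries a common factor $\kappa_{\theta}^{r/\theta}$; after dividing through,
\begin{equation*}
\bigg|\bigg\langle \Nop_{\theta}(f),\frac{e}{\|e\|_{L^{\theta}(\mu)}}\bigg\rangle_{\mu}\bigg|^{\frac{r}{\theta-1}} + d_{\theta,r}^{\star,\mathbb{R}}\inf_{\alpha \in \mathbb{R}}\|f-\alpha e\|_{L^{\theta}(\mu)}^{r} \le \|f\|_{L^{\theta}(\mu)}^{r}.
\end{equation*}
Since $\inf_{\alpha\in\mathbb{C}}\|f-\alpha e\|_{L^{\theta}(\mu)} \le \inf_{\alpha\in\mathbb{R}}\|f-\alpha e\|_{L^{\theta}(\mu)}$, the same bound holds with the complex infimum in place of the real one, which is precisely \eqref{eq-Hold4} for complex $(f,e)$. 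Hence $d_{\theta,r}^{\star,\mathbb{R}} \le d_{\theta,r}^{\star}$.

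The only non-routine step is verifying the fiberwise Fourier identity for $\langle\Nop_{\theta}(F),E\rangle_{\mu\times m}$, and this reduces to the two trigonometric integrals above after a direct trigonometric expansion writing $f(x) = |f(x)|e^{i\phi_1(x)}$, $e(x) = |e(x)|e^{i\phi_2(x)}$, $\zeta = e^{it}$. The companion Proposition~\ref{Complexc} is established by the same scheme, with the nonlinearity applied to $e$ rather than $f$ and the normalization taken so that $\langle f,\Nop_{\theta}(e)\rangle_{\mu}\ge 0$.
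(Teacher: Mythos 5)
Your proof is correct, but it takes a genuinely different and considerably softer route than the paper. The paper's Section~\ref{s8} proves the pointwise identity $\BelldC=\BelldPlus$ between the complex and real Bellman functions: this requires the explicit foliation from Theorem~\ref{BellmanTheorem2}, the two-square-root inequality~\eqref{eqn7} via Lemma~\ref{lem67} (which in turn rests on~\eqref{eq2} from Section~\ref{s5}), and a connectedness argument to choose the right branch $G^+$; the constant identity is then read off at the points $(z_1,1,1,0)$. You instead tensor with the circle: lifting $(f,e)$ to $F(x,\zeta)=\re(f(x)\bar\zeta)$, $E(x,\zeta)=\re(e(x)\bar\zeta)$ makes every term in~\eqref{eq-Hold4} scale by the same factor $\kappa_\theta^{r/\theta}$ (your fiberwise identities check out, including the exponent bookkeeping $(1-\tfrac1\theta)\tfrac{r}{\theta-1}=\tfrac r\theta$ and the integrability of $|\cos s|^{\theta-1}$ for all $\theta>1$), and the final passage from $\inf_{\alpha\in\mathbb R}$ to $\inf_{\alpha\in\mathbb C}$ goes the right way. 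The one ingredient you use implicitly is that $d_{\theta,r}^{\star,\mathbb{R}}$ is the same on the enlarged non-atomic space $X\times\mathbb{T}$ as on $[0,1]$; this is covered by the paper's remark on measure-space independence and the proof of Proposition~\ref{CFromBell}. What each approach buys: yours is elementary, works uniformly in $\theta$ and $r$, and disposes of Propositions~\ref{Complexc} and~\ref{Complexd} simultaneously with no case analysis; the paper's argument is much heavier but yields the stronger structural fact that the complex Bellman functions coincide with the real ones, i.e., information about complex near-extremizers, not merely equality of the optimal constants.
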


Though the present 
paper is self-contained, it employs the heuristic experience 
acquired by the authors as a result of study
of other problems. The closest one is the Bellman function 
in~\cite{ISZ}. See~\cite{ShortReport2} for a
more geometric point of view and~\cite{ShortReport},~\cite{TAMS}, 
and~\cite{Mem} for a study of a related problem.
We also refer the reader 
to~\cite{NTV},~\cite{Osekowski},~\cite{SV},~and~\cite{Volberg} for 
history and basics of the Bellman function theory.
It would appear that all the previously computed sharp Bellman functions were 
either two or three dimensional. Our functions~$\BellcPlus$
and~$\BelldPlus$ depend on four variables.

\paragraph{Organization of the paper.} In Section~\ref{s2}, we 
study simple properties of the functions~$\BellcPlus$ and~$\BelldPlus$ 
and prove our Propositions~\ref{CFromBell} and~\ref{DFromBell}. We 
also introduce more Bellman functions here. In Section~\ref{s3}, 
we compute~$\BellcPlus$ and describe its foliation.
Section~\ref{s4} contains the computation of the 
constants~$c_{p,r}^{\star,\mathbb{R}}$. In Section~\ref{s5},
we prove Proposition~\ref{Complexc}, in particular, we  prove 
Theorem~\ref{Cthm}. Then, in Section~\ref{s6},
we find~$\BelldPlus$. In fact, it can be expressed in terms 
of~$\BellcPlus$ and its companion minimal function by a change of variables. 
Next, we compute~$d_{p,r}^{\star,\mathbb{R}}$ in Section~\ref{s7} and 
prove Proposition~\ref{Complexd} in Section~\ref{s8}. We finish the 
paper with the elementary proof of Theorem~\ref{Elementary} given in 
Section~\ref{s9}.

\section{Basic properties of Bellman functions}\label{s2}
We assume~$p \geq 2$. By H\"older's inequality, there does not 
exist~$f$ or~$g$ such 
that~$\av{|f|^p}{I} < |\av{f}{I}|^p$ or~$\av{|g|^q}{I} < |\av{g}{I}|^q$. 
Therefore, the function~$\BellcPlus$ is equal to~$-\infty$ outside the set
\begin{equation*}
\Omegac = \Big\{x\in \mathbb{R}^4\,\Big|\; x_3 \geq |x_1|^p, 
x_4 \geq |x_2|^q\Big\}.
\end{equation*}
On the other hand, since for any~$x \in \Omegac$ there exist 
functions~$f$ and~$g$ such that~$x_1 = \av{f}{I}$,~$x_2 = 
\av{g}{I}$,~$x_3 = \av{|f|^p}{I}$, and~$x_4 = \av{|g|^q}{I}$, 
we have~$\BellcPlus(x) > -\infty$ for~$x \in \Omegac$. We call~$\Omegac$ 
the natural domain (or simply the domain) of~$\BellcPlus$. The set
\begin{equation*}
\FixedBoundary\Omegac = \Big\{x\in \mathbb{R}^4\,\Big|\; 
x_3 = |x_1|^p, x_4 = |x_2|^q\Big\}
\end{equation*}
is called the skeleton of~$\Omegac$ (this is the set of the extreme 
points of~$\Omegac$; note that it is only a small part of the topological 
boundary). Similarly,
\begin{equation*}
\Omegad=\Big\{x\in \mathbb{R}^4\,\Big|\; x_4\geq 0, \, |x_1| \leq x_3^{1/p}, 
\, |x_5|\leq x_3^{1/p}x_4^{1/q}\Big\}
\end{equation*}
is the domain of~$\BelldPlus$ and
\begin{equation*}
\FixedBoundary\Omegad = \Big\{x\in \mathbb{R}^4\,\Big|\;x_4\geq 0, \, 
|x_1| =  x_3^{1/p}, \, |x_5| = x_3^{1/p}x_4^{1/q}\Big\}
\end{equation*}
is the skeleton of~$\Omegad$.
\begin{lem}\label{SimpleProperties}
The functions~$\BellcPlus$ and~$\BelldPlus$ satisfy the following 
properties\textup:
\begin{itemize}
\item They do not depend on the interval~$I$\textup;
\item They satisfy the boundary conditions
\begin{equation*}
\begin{aligned}
\BellcPlus(x_1,x_2,|x_1|^p,|x_2|^q) &= x_1x_2;\\
\BelldPlus(x_1,|x_1|^{p},|x_2|^{q},x_1x_2) &= x_2;
\end{aligned}
\end{equation*}
\item They are positively homogeneous\textup, for 
positive~$\lambda_1$ and~$\lambda_2$, we have\textup:
\begin{equation*}
\begin{aligned}
\BellcPlus(\lambda_1 x_1,\lambda_2 x_2,\lambda_1^p x_3,\lambda_2^qx_4) 
&= \lambda_1\lambda_2\BellcPlus(x);\\
\BelldPlus(\lambda_1 x_1,\lambda_1^p x_3,\lambda_2^q x_4,
\lambda_1 \lambda_2 x_5) &= \lambda_2 \BelldPlus(x);
\end{aligned}
\end{equation*}
\item They are pointwise minimal among all concave functions 
on their domains that satisfy the same boundary conditions. 
\end{itemize}
\end{lem}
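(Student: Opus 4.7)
The plan is to verify the four properties more or less in the order stated, since each later property leans lightly on the earlier ones. I will treat $\BellcPlus$ in detail; the argument for $\BelldPlus$ is parallel, the only bookkeeping difference being which of the four scalar averages serves as the objective and which as the constraints.

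First, for interval independence, I would exhibit an explicit affine bijection $\phi\colon J\to I$ between any two finite intervals and note that the pullback map $f\mapsto f\circ\phi$ preserves every average appearing in the definition, so the supremum over admissible pairs on $I$ equals the supremum over admissible pairs on $J$. For the boundary conditions, I would observe that the constraints $x_3=|x_1|^p$ and $x_4=|x_2|^q$ together with $\av{f}{I}=x_1$, $\av{g}{I}=x_2$ produce equality cases in Jensen's inequality applied to the strictly convex functions $t\mapsto|t|^p$ and $t\mapsto|t|^q$, forcing $f\equiv x_1$ and $g\equiv x_2$ almost everywhere; hence $\av{fg}{I}=x_1x_2$, which is both an upper and a lower bound for the supremum. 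For $\BelldPlus$ the same Jensen argument pins down $f\equiv x_1$ from $x_3=|x_1|^p$, and then $x_5=x_1x_2$ forces $\av{g}{I}=x_2$ when $x_1\ne0$, with the edge case $x_1=0$ absorbing any consistent choice. Homogeneity is immediate: if $(f,g)$ is admissible at $x$, then $(\lambda_1 f,\lambda_2 g)$ is admissible at the rescaled point, and the objective $\av{fg}{I}$ (respectively $\av{g}{I}$) scales by $\lambda_1\lambda_2$ (respectively $\lambda_2$).

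The main step is the minimality characterization. I would first establish concavity of $\BellcPlus$ on $\Omegac$ by the standard splitting argument: given $x^{(1)},x^{(2)}\in\Omegac$, a scalar $t\in(0,1)$, and near-optimal admissible pairs $(f_i,g_i)$ on $I$ for $x^{(i)}$, partition $I$ into two subintervals $I_1,I_2$ of lengths $t|I|$ and $(1-t)|I|$, and glue rescaled copies of $(f_i,g_i)$ on $I_i$. The resulting pair is admissible at the convex combination $tx^{(1)}+(1-t)x^{(2)}$, and its objective average is $t\av{f_1g_1}{I}+(1-t)\av{f_2g_2}{I}$, yielding concavity after taking suprema. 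Combined with the boundary condition already proved, this gives that $\BellcPlus$ is a concave function on $\Omegac$ whose skeleton values are $x_1x_2$.

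Finally, to show $\BellcPlus$ is the pointwise minimum of all such concave functions, fix any concave $G\colon\Omegac\to\mathbb{R}$ satisfying the same boundary conditions on $\FixedBoundary\Omegac$, and fix an admissible pair $(f,g)$ on $I$ with associated point $x\in\Omegac$. I would run the dyadic martingale argument: iterating the splitting of $I$ into halves produces, on the dyadic filtration $\{\mathcal{F}_n\}$, the four-vector martingale $X_n=(\av{f}{\cdot},\av{g}{\cdot},\av{|f|^p}{\cdot},\av{|g|^q}{\cdot})$ with values in $\Omegac$ and $X_0=x$. Concavity of $G$ makes $G(X_n)$ an $\mathcal{F}_n$-supermartingale in expectation, so $G(x)\geq\mathbb{E}[G(X_n)]$ for every $n$. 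By the martingale convergence theorem $X_n\to(f,g,|f|^p,|g|^q)$ almost everywhere, which lies in $\FixedBoundary\Omegac$, where $G$ equals $fg$; passing to the limit (using local boundedness of $G$, which follows from concavity on $\Omegac$, together with dominated convergence, obtained by a standard truncation of $f,g\in L^\infty$ and an approximation argument for general admissible pairs) gives $G(x)\geq\av{fg}{I}$, and supping over admissible $(f,g)$ yields $G\geq\BellcPlus$ pointwise. The $\BelldPlus$ case is identical, with the objective $\av{g}{\cdot}$ and the enlarged martingale including the fifth coordinate $\av{fg}{\cdot}$. The delicate point will be justifying the passage to the limit, i.e.\ controlling $G$ near the boundary of $\Omegac$ where concavity alone does not automatically yield continuity; I would handle this by restricting first to bounded $f,g$ so that $X_n$ stays in a compact subset of $\Omegac$ where $G$ is continuous, and then approximating general admissible pairs by truncations.
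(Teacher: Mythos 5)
The paper does not actually prove this lemma; it cites Propositions 1 and 2 of \cite{ISZ}, and your proposal reconstructs essentially that standard argument. The first three bullets (interval independence via an affine change of variable, the boundary values via the equality case in Jensen's inequality, homogeneity via rescaling admissible pairs) and the concavity-by-splitting step are all correct. One minor caveat: for $\BelldPlus$ your Jensen argument pins down $\av{g}{I}=x_2$ only when $x_1\ne 0$; at $x_1=0$ the skeleton point is $(0,0,x_4,0)$ and the supremum of $\av{g}{I}$ there is $x_4^{1/q}=|x_2|$, not $x_2$ (compare the boundary condition for $\BelldC$ in Section~\ref{s8}), so the degenerate case is not merely ``absorbed'' --- but this is an imprecision in the statement itself rather than in your argument.

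The genuine gap is in the last step, and it sits exactly where you flagged it. In the dyadic martingale argument you need $\liminf_n G(X_n)\ge G(X_\infty)$ almost everywhere, where $X_\infty=(f,g,|f|^p,|g|^q)$ lies on $\FixedBoundary\Omegac$. Your fix --- truncate $f,g$ so that $X_n$ stays in a compact set where $G$ is continuous --- does not work: the limit points are on the skeleton, which is part of the topological boundary of $\Omegac$, and a concave function on a convex set need not be lower semicontinuous at boundary (even extreme) points. The classical example $G(x,y)=-y^2/x$ on $\{x>0\}\cup\{(0,0)\}$, with $G(0,0)=0$, is concave yet has $\liminf_{(x,y)\to(0,0)}G=-\infty$; nothing in the hypotheses of the lemma rules out analogous behaviour of a competitor $G$ near $\FixedBoundary\Omegac$. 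The standard repair avoids the limit entirely: for \emph{step} functions $f,g$ the point $x$ is a finite convex combination of skeleton points with weights $|I_i|/|I|$, so $G(x)\ge\av{fg}{I}$ is a finite application of concavity requiring no regularity of $G$; one then shows separately that the supremum defining $\BellcPlus(x)$ is unchanged when restricted to step functions (approximate a general admissible pair by step functions and correct them to restore the four constraints exactly --- this perturbation argument is where the real work is). Alternatively, note that in this paper the minimality is only ever invoked for affine majorants $\Psi$, for which your limit passage is unproblematic; but as a proof of the lemma as stated, the step-function route is the one to take.
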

This lemma is very standard, see Propositions $1$ and~$2$ in~\cite{ISZ}, 
where a completely similar statement is proved for another Bellman function.

Mininal concave functions can be described in terms of convex hulls. 
It is convenient to consider companion functions 
for~$\BellcPlus$ and~$\BelldPlus$.
\begin{defn}
Let~$I\subset \mathbb{R}$ be a finite interval,~$L^p=L^p(I,m)$. 
Consider the functions~$\BellcMinus$ and~$\BelldMinus$ given by the formulas
\begin{equation*}
\BellcMinus(x_1,x_2,x_3,x_4) = \inf\Big\{\av{fg}{I}\,\Big|\;f,g 
\hbox{ real-valued, }  \av{f}{I}=x_1, 
\av{g}{I}=x_2, \av{|f|^p}{I}=x_3, \av{|g|^q}{I}=x_4\Big\};
\end{equation*}
\begin{equation*}
\BelldMinus(x_1,x_3,x_4,x_5) = \inf\Big\{\av{g}{I}\,\Big|\;f,g 
\hbox{ real-valued, }  \av{f}{I}=x_1, \av{|f|^p}{I}=x_3, \av{|g|^q}{I}=x_4, 
\av{fg}{I} = x_5\Big\}.
\end{equation*}
\end{defn}
\begin{rem}
The functions~$\BellcMinus$ and~$\BelldMinus$ satisfy the properties 
similar to those listed in Lemma~\ref{SimpleProperties}: the first 
three properties remain the same, and in the last property, one should 
replace ``minimal concave'' with ``maximal convex''.
\end{rem}
\begin{lem}\label{GeneralConvex}
Let~$\omega$ be a closed convex subset of~$\mathbb{R}^n$, 
let~$\partial \omega$ be the set of its extreme points. Consider a 
continuous function~$f \colon \partial \omega \to \mathbb{R}$. 
Let~$\mathcal{B}^+_f$ and~$\mathcal{B}^-_f$ be the minimal convex and 
the maximal concave functions on~$\omega$ that coincide with~$f$ 
on~$\partial\omega$. The intersection of the subgraph 
of~$\mathcal{B}^+_f$ and the epigraph of~$\mathcal{B}^-_f$ coincides 
with the closure of the convex hull \textup(in~$\mathbb{R}^{n+1}$\textup) 
of the graph of~$f$\textup:
\begin{equation*}
\Big\{(x,y)\in \omega \times \mathbb{R}\,\Big|\; \mathcal{B}^-_f(x) 
\leq y \leq \mathcal{B}^+_f(x)\Big\} = \overline{\conv\Big\{(x, f(x)) 
\,\Big|\; x \in \partial \omega\Big\}}.
\end{equation*}
\end{lem}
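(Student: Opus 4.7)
The plan is to identify both sides of the claimed equality with the single closed convex set
\[
K := \overline{\conv\{(x, f(x)) : x \in \partial\omega\}} \subset \mathbb{R}^{n+1},
\]
and then match the vertical-fiber slicing of $K$ with the envelope functions $\mathcal{B}^{\pm}_f$.

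First I would introduce the slice functions $U(x) := \sup\{y : (x,y) \in K\}$ and $L(x) := \inf\{y : (x,y) \in K\}$. Convexity of $K$ makes each vertical fiber an interval, so $K = \{(x,y) : L(x) \leq y \leq U(x)\}$ over the projection of $K$ onto $\omega$. That projection is all of $\omega$: by Krein--Milman every $x \in \omega$ decomposes as $x = \sum \lambda_i x_i$ with $x_i \in \partial\omega$, which places $(x, \sum \lambda_i f(x_i))$ inside $K$. The convexity of $K$ also forces $L$ to be convex and $U$ to be concave on $\omega$.

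Second I would verify that $U$ and $L$ agree with $f$ on $\partial\omega$. If $(x_0, y) \in K$ for an extreme point $x_0$, then by definition it is a limit of convex combinations $\sum_i \lambda_i^{(k)}(x_i^{(k)}, f(x_i^{(k)}))$ with $x_i^{(k)} \in \partial\omega$; extremality of $x_0$ forces the mass to concentrate on points converging to $x_0$, and continuity of $f$ then yields $y = f(x_0)$. Consequently $U(x_0) = L(x_0) = f(x_0)$, so both slice functions realize the prescribed boundary data.

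Third I would identify $U$ with the minimal concave function and $L$ with the maximal convex function on $\omega$ matching $f$ on $\partial\omega$ (this is how I read the symbols $\mathcal{B}^{\pm}_f$, consistently with the ordering $\mathcal{B}^-_f \leq y \leq \mathcal{B}^+_f$ in the statement). Since $U$ is itself concave with the prescribed boundary values, one has $\mathcal{B}^+_f \leq U$. Conversely, any concave function $\psi$ on $\omega$ with $\psi|_{\partial\omega} = f$ satisfies $\psi(x) \geq \sum \lambda_i f(x_i)$ for every decomposition $x = \sum \lambda_i x_i$ into extreme points; passing to the supremum over such decompositions and to their limits gives $\psi \geq U$, hence $\mathcal{B}^+_f \geq U$. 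The same reasoning produces $\mathcal{B}^-_f = L$, and the claimed set equality follows at once.

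The main obstacle is the boundary verification in the second step: one must carefully handle the closure in the definition of $K$ and the fact that $\partial\omega$ need not be closed in $\mathbb{R}^n$. The tools for this are Carath\'eodory's theorem (bounding each convex combination in $\mathbb{R}^{n+1}$ by $n+2$ terms) together with a compactness/subsequence argument and the continuity of $f$ on $\partial\omega$; everything else reduces to standard convex-analysis bookkeeping.
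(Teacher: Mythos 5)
The paper never proves this lemma: the authors explicitly decline, referring to Proposition~3 of~\cite{ISZ}, which is proved there for strictly convex \emph{compact} domains. So your proposal must stand on its own. Its architecture --- slice the closed convex hull $K$ vertically, show the slice functions $U$ and $L$ are concave/convex and restrict to $f$ on $\partial\omega$, then sandwich them against the extremal envelopes --- is the standard and essentially the intended route, and your reading of $\mathcal{B}^{\pm}_f$ (the statement's ``minimal convex and maximal concave'' must be ``minimal concave and maximal convex'' for the displayed ordering to be consistent) is correct.

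Two steps are genuinely incomplete. First, ``by Krein--Milman every $x\in\omega$ decomposes as a finite convex combination of extreme points'' requires $\omega$ compact (Minkowski's theorem); for a general closed convex subset of $\mathbb{R}^n$ it fails outright (a half-line has a single extreme point), and the domains $\Omegac$, $\Omegad$ to which the lemma is actually applied are unbounded. One must either add the hypothesis $\omega=\conv(\partial\omega)$ --- which does hold for the paper's domains and is in any case necessary for the two sides to have the same projection onto $\mathbb{R}^n$ --- or verify it by hand. Your mass-concentration argument in the second step has the same hidden dependence on compactness of $\omega$ and boundedness of $f$.

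Second, and more seriously, the conclusion $\psi\geq U$ in your third step does not follow from ``$\psi(x)\geq\sum\lambda_i f(x_i)$ for every decomposition, then pass to limits.'' The decompositions only give $\psi\geq U_0$, where $U_0$ is the upper envelope of the convex hull of the graph \emph{before} taking the closure; upgrading this to $\psi\geq U$ requires $\psi$ to be upper semicontinuous, and a concave function on a closed convex set may jump down at non-extreme boundary points. This is not a removable technicality: when $\partial\omega$ is not closed the lemma as stated is false. Take $\omega\subset\mathbb{R}^3$ to be the convex hull of the unit circle $C$ in the plane $x_3=0$ and of the segment $S=\{(1,0)\}\times[-1,1]$. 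The point $v=(1,0,0)$ is a limit of extreme points but is not extreme, and its only representation by extreme points uses the two endpoints of $S$. Put $f\equiv 0$ on $C\setminus\{v\}$ and $f=1$ at the endpoints of $S$; this $f$ is continuous on $\partial\omega$. Then the fiber of $K$ over $v$ contains $0$ (as a limit of graph points over $C$), while every decomposition of $v$ into extreme points gives the value $1$; consequently the lower envelope $L_0$ of the \emph{un}-closed hull is a convex function with the prescribed boundary values and $L_0(v)=1$, so $\mathcal{B}^-_f(v)\geq 1$ and the left-hand set misses the point $(v,0)\in K$. The correct repair is to impose what actually holds in the application --- $\partial\omega$ closed and $f$ the restriction of a function continuous on $\omega$ --- under which (over compact pieces) the convex hull of the graph is already closed, $U=U_0$, and your third step needs no limit-passing at all.
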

We will not prove this lemma. It is a slight generalization of 
Proposition~$3$ of~\cite{ISZ} (in~\cite{ISZ}, we worked with strictly 
convex compact domains). It has an immediate corollary that allows to 
express~$\BelldPlus$ and~$\BelldMinus$ in terms 
of~$\BellcPlus$ and~$\BellcMinus$.

\begin{cor}\label{GeneralConvex2}
Let the five dimensional body~$\mathbb{K}$ be the convex hull of 
the two dimensional surface
\begin{equation*}
\Big\{(t_1,t_2,|t_1|^p,|t_2|^q,t_1t_2) \in \mathbb{R}^5\,\Big|\; 
t_1,t_2 \in \mathbb{R}\Big\}.
\end{equation*} 
Then, on one hand,
\begin{equation*}
\mathbb{K} = \Big\{x \in \mathbb{R}^5\,\Big|\; (x_1,x_2,x_3,x_4) 
\in \Omegac,\ \BellcMinus(x_1,x_2,x_3,x_4) \leq x_5 \leq 
\BellcPlus(x_1,x_2,x_3,x_4)\Big\}
\end{equation*}
and on the other hand
\begin{equation*}
\mathbb{K} =\Big\{x \in \mathbb{R}^5\,\Big|\; (x_1,x_3,x_4,x_5) \in 
\Omegad,\ \BelldMinus(x_1,x_3,x_4,x_5) \leq x_2 \leq 
\BelldPlus(x_1,x_3,x_4,x_5)\Big\}.
\end{equation*}
\end{cor}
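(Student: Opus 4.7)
The plan is to apply Lemma~\ref{GeneralConvex} twice, to two different orthogonal projections of the parametric surface
\begin{equation*}
S = \{\phi(t_1, t_2) : (t_1, t_2) \in \mathbb{R}^2\}, \qquad \phi(t_1, t_2) = (t_1, t_2, |t_1|^p, |t_2|^q, t_1 t_2),
\end{equation*}
whose closed convex hull is $\mathbb{K}$ by definition. For the first identity, I would project onto the first four coordinates. One checks that the image of $S$ under this projection is exactly $\FixedBoundary \Omegac$ and that this skeleton coincides with the set of extreme points of $\Omegac$; the latter is routine since $\Omegac$ is the product of the two epigraphs of the strictly convex functions $x_1\mapsto|x_1|^p$ and $x_2\mapsto|x_2|^q$ on $\mathbb{R}$. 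Over the skeleton, the dropped coordinate $x_5=t_1t_2$ is a single-valued continuous function of the first four coordinates, and by the boundary conditions in Lemma~\ref{SimpleProperties} it is the common trace of $\BellcPlus$ and $\BellcMinus$ on $\FixedBoundary \Omegac$. Lemma~\ref{GeneralConvex} applied with $\omega=\Omegac$ and $f(x_1,x_2,|x_1|^p,|x_2|^q)=x_1x_2$ then yields exactly the first description of $\mathbb{K}$.

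For the second identity I would argue analogously with the projection onto the coordinates $(x_1,x_3,x_4,x_5)$, taking $\omega=\Omegad$ and boundary function $f(x_1,|x_1|^p,|x_2|^q,x_1x_2)=x_2$. The identification of $\FixedBoundary \Omegad$ with the set of extreme points of $\Omegad$ is verified by noting that at any non-skeleton point of $\Omegad$ at least one of the defining inequalities $|x_1|\le x_3^{1/p}$ or $|x_5|\le x_3^{1/p}x_4^{1/q}$ is strict, so the point lies in the relative interior of a nontrivial line segment of $\Omegad$. Lemma~\ref{SimpleProperties} then identifies $\BelldPlus$ and $\BelldMinus$ as the minimal concave and maximal convex functions on $\Omegad$ with the required boundary trace, and a second application of Lemma~\ref{GeneralConvex} gives the second description of $\mathbb{K}$.

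The main subtlety is that the second projection $(t_1,t_2)\mapsto(t_1,|t_1|^p,|t_2|^q,t_1t_2)$ fails to be injective along the stratum $\{t_1=0\}$, since $(0,t_2)$ and $(0,-t_2)$ produce the same skeleton point $(0,0,|t_2|^q,0)$ while the dropped coordinate $x_2=t_2$ is sign-sensitive. So the candidate boundary function is genuinely two-valued on this lower-dimensional subset of $\FixedBoundary \Omegad$. I would resolve this by first applying Lemma~\ref{GeneralConvex} on the open dense piece $\{x_1\ne 0\}\cap \FixedBoundary \Omegad$, where $f$ is well-defined and continuous, and then closing up in $\mathbb{R}^5$ to absorb the degenerate stratum: over $(0,0,x_4,0)$ the closed convex hull automatically contributes the interval $[-x_4^{1/q},x_4^{1/q}]$ in the $x_2$-direction, which agrees with $[\BelldMinus,\BelldPlus]$ there by Lemma~\ref{SimpleProperties}. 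Once this technicality is settled, both identities follow mechanically.
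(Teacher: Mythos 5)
Your proposal is correct and follows exactly the route the paper intends: the paper presents this corollary as an immediate consequence of Lemma~\ref{GeneralConvex} applied to the two coordinate projections of the surface, and you simply spell out those two applications. One small caveat: your claimed identification of $\FixedBoundary\Omegad$ with the extreme points of $\Omegad$ is not quite exact, since the points $(0,0,x_4,0)$ with $x_4>0$ lie on the skeleton but are interior to the segment joining $(0,0,0,0)$ and $(0,0,2x_4,0)$ and hence are not extreme --- but this degeneracy lives on the same stratum $\{x_1=0\}$ that you already isolate and absorb by the closure argument, so the proof stands.
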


Corollary~\ref{GeneralConvex2} says that the graphs of the four 
Bellman functions we consider are parts of the boundary of a certain 
convex hull. We invoke the Carath{\'e}odory theorem (see 
Corollary~$1$ in~\cite{ISZ}) to see for each point~$x\in \Omega$ 
there exists not more than five points~$x_j$ in the set~$\FixedBoundary \Omega$
such that~$x\in\conv\{x_j\}_j$ and~$\Bell$ (which is any of our Bellman 
functions) is linear on~$\conv\{x_j\}$ (see Corollary~$1$ 
in~\cite{ISZ})\footnote{This is not quite correct: our domain is 
not compact and we are not allowed to use the Carath{\'e}odory theorem; 
we do not rely on this reasoning formally.}. 
Thus,~$\Omega$ splits into subsets~$\omega$, which might be one, two, 
three, or four dimensional, such that~$\Bell$ is linear on each~$\omega$. 
Such a splitting is called the foliation of~$\Bell$. The function~$\Bell$ 
can be easily computed once we establish its foliation. 

We also state without proof that for any~$\omega$ there exists a unique 
affine function~$\mathrm{D} \Bell (\omega)$ whose graph is the supporting 
plane of the subgraph of~$\Bell$ at each point~$x \in \omega$. In other 
words, the gradient of~$\Bell$ is one and the same for all the points 
inside each subdomain of the foliation. We state this principle without 
proof since, first, it is not needed for the formal proof (however, it 
helps us to find the answer), second, it requires additional smoothness 
assumptions (which our particular problem does satisfy).

Our strategy of the proof will be to try to find the affine 
functions~$\mathrm{D} \Bell$ and then reconstruct the 
function~$\Bell$ from them. In a sense, we find the convex 
conjugate of~$\Bell$ rather than the function itself.

\begin{proof}[Proof of Proposition~\textup{\ref{CFromBell}}.]
We rewrite inequality~\eqref{eq-Hold3} as
\begin{equation*}\label{eq01}
\inf_{\alpha\in\mathbb{R}}\|f-\alpha e\|_{L^{\theta}(\mu)}^r \leq 
c_{\theta,r}^{-1}\Big(\|f\|_{L^{\theta}(\mu)}^r - 
\big|\langle f,e|e|^{\theta-2}\rangle_{\mu}\big|^r \Big), 
\quad \|e\|_{L^{\theta}(\mu)} = 1.
\end{equation*}
By the Hahn-Banach Theorem, the latter inequality is equivalent to
\begin{equation}\label{eq04}
\big|\langle f,g\rangle_{\mu}\big|^r
\leq c_{\theta,r}^{-1}\Big(\|f\|_{L^{\theta}(\mu)}^r - 
\big|\langle f,e |e|^{\theta-2}\rangle_{\mu}\big|^r \Big),
\quad \|e\|_{L^{\theta}(\mu)} = 1, \ \|g\|_{L^{\theta'}(\mu)}=1 \ 
\hbox{and}\ \langle e,g\rangle_{\mu}=0.
\end{equation}
Without loss of generality, we may assume that~$e\ne 0$ almost everywhere. 
Consider the measure~$d\tilde\mu = |e|^{\theta}d\mu$. This is a continuous 
probability measure. We also consider modified 
functions~$\tilde f = \frac{f}{e}\in L^{\theta}(\tilde\mu)$ 
and~$\tilde g = \frac{ge}{|e|^{\theta}}\in L^{\theta'}(\tilde\mu)$. Clearly,
\begin{multline*}
\langle \tilde f, \tilde g\rangle_{\tilde{\mu}} =\langle f, g\rangle_{\mu},
\quad \|\tilde f\|_{L^{\theta}(\tilde \mu)} = \|f\|_{L^{\theta}(\mu)},\quad 
\|\tilde g\|_{L^{\theta'}(\tilde \mu)} = \|g\|_{L^{\theta'}(\mu)}, 
\\ \langle 1,\tilde g\rangle_{\tilde{\mu}}=\langle e,g\rangle_{\mu}, 
\quad \hbox{and}\quad 
\langle \tilde{f},1\rangle_{\tilde{\mu}} = \langle f,e |e|^{\theta-2}\rangle_{\mu}
\end{multline*}
and~\eqref{eq04} turns into
\begin{equation*}
\big|\av{\tilde{f},\tilde{g}}{\tilde{\mu}}\big|^r \leq 
c_{\theta,r}^{-1}\big(\|\tilde{f}\|_{L^{\theta}(\tilde{\mu})}^r - 
\big|\av{\tilde{f}}{\tilde{\mu}}\big|^r\big).
\end{equation*}
It remains to identify the standard probability space~$(X,\tilde\mu)$ 
with~$([0,1],m)$ and use Definition~\ref{Bellman}.
\end{proof}
The proof of Proposition~\ref{DFromBell} is completely similar.

\section{The computation of~$\BellcPlus$}\label{s3}

\subsection{Statement of results}\label{s31}
We will use three auxiliary functions~$\varphi, \lambda$, and~$\rho$. The 
function~$\varphi\colon \mathbb{R} \to \mathbb{R}$ is simple:
\begin{equation*}
\varphi(R) = R|R|^{p-2}.
\end{equation*}
The function~$\lambda\colon\mathbb{R} \to \mathbb{R}$ is given by the formula
\begin{equation}\label{lambdadef}
\lambda(R) = \begin{cases}
\frac{1}{1+R} - \frac{p-1}{1+\varphi(R)}, \quad & R\ne -1;\\
-\frac{p-2}{2},\quad & R=-1.
\end{cases}
\end{equation}
Note that~$\lambda$ is a continuous function. The function~$\rho$ will 
be defined after the following lemma.
\begin{lem}\label{RealR0}
There exists~$R_0\in (0,1)$ such that
\begin{equation}\label{eqR0}
R_0^{\frac{p-1}{2}}+R_0^{-\frac{p-1}{2}}=(p-1)\big(R_0^{\frac{1}{2}}+R_0^{-\frac{1}{2}}\big).
\end{equation}
The function~$\lambda$ is decreasing on~$(-1,R_0)$ and increasing on~$(R_0,1)$.
\end{lem}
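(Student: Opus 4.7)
The plan is to analyze the sign of $\lambda'$ directly on $(-1,1)$. For $R \in (-1,1)\setminus\{-1,0\}$ a direct computation gives
$$\lambda'(R) = -\frac{1}{(1+R)^2} + \frac{(p-1)^2|R|^{p-2}}{(1+\varphi(R))^2},$$
and $\lambda'(0)=-1$ since $p>2$ makes the second term vanish. After clearing positive denominators, $\lambda'(R)$ has the opposite sign to
$$g(R) := (1+\varphi(R))^2 - (p-1)^2|R|^{p-2}(1+R)^2,$$
so it suffices to determine the sign of $g$ on $(-1,1)$.

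For $R\in(-1,0)$ I would put $s=-R\in(0,1)$ and substitute $s=e^{-2u}$ with $u>0$. A short manipulation (extracting positive square roots and then multiplying through by $e^{(p-1)u}$) reduces the inequality $g(R)>0$ to
$$\sinh((p-1)u) > (p-1)\sinh(u),$$
which is immediate from the power series $\sinh(au) = u\sum_{k\geq 0}(au)^{2k}/(2k+1)!$ and the fact that $p-1>1$ makes every coefficient strictly larger than in $(p-1)\sinh(u)$. Hence $\lambda'<0$ on $(-1,0)$.

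For $R\in(0,1)$ the equation $g(R)=0$ reduces (after taking positive square roots) to $1+R^{p-1}=(p-1)R^{(p-2)/2}(1+R)$, and multiplying by $R^{-(p-1)/2}$ gives exactly \eqref{eqR0}. To pin the root down I would exploit the hidden $R\mapsto 1/R$ symmetry of this equation: writing $R=t^2$, it becomes $G(t)=0$ with $G(t)=t^{p-1}+t^{-(p-1)}-(p-1)(t+t^{-1})$, and the further substitution $t=e^v$ turns it into $F(v):=\cosh((p-1)v)-(p-1)\cosh(v)=0$, an even function of $v$. On $(0,\infty)$ one has $F(0)=2-p<0$, $F(v)\to+\infty$, and
$$F'(v) = (p-1)\bigl[\sinh((p-1)v)-\sinh(v)\bigr] > 0$$
because $p-1>1$, so $F$ has a unique positive root $v_0$. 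Returning to $R$-coordinates, the two positive roots of $g$ are $R_0:=e^{-2v_0}\in(0,1)$ and $1/R_0>1$, with $g<0$ precisely on $(R_0,1/R_0)$. Consequently $\lambda'<0$ on $(0,R_0)$ and $\lambda'>0$ on $(R_0,1)$, which together with the analysis on $(-1,0)$ yields the monotonicity claim.

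The main obstacle is the uniqueness of $R_0\in(0,1)$: the transcendental equation \eqref{eqR0} is not visibly monotone or convex as a function of $R$. The decisive observation is the hidden reciprocal symmetry $R\mapsto 1/R$, which after the change of variable $R=e^{2v}$ turns the question into a single sign change of an even function $F$ on the real line, reducing everything to the trivial monotonicity $\sinh((p-1)v)>\sinh(v)$.
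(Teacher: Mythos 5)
Your proof is correct. The opening reduction is the same as the paper's: both of you write $\lambda'$ as a difference of squares over a positive denominator, so that $\operatorname{sign}\lambda'(R)$ equals the sign of $A-B$ with $A=(p-1)|R|^{(p-2)/2}(1+R)$ and $B=1+\varphi(R)$ (the paper calls this factor $\kappa(R)$; your ``taking positive square roots'' of $g$ is the same factorization). Where you diverge is in determining the sign of this factor. The paper differentiates $\kappa$ directly, checks $\kappa'>0$ on $(0,1)$ and $\kappa'<0$ on $(-1,0)$ by an elementary but slightly fiddly computation, and then uses $\kappa(0)=-1<0$, $\kappa(1)=2(p-2)>0$, $\kappa(-1)=0$ together with the intermediate value theorem. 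You instead substitute $R=e^{2v}$ (resp.\ $-R=e^{-2u}$), which converts the factor into $\cosh((p-1)v)-(p-1)\cosh(v)$ on $(0,1)$ and into the inequality $\sinh((p-1)u)>(p-1)\sinh(u)$ on $(-1,0)$; the hidden $R\mapsto 1/R$ symmetry then makes the uniqueness of $R_0$ and the sign pattern transparent from the monotonicity of a single even function. Your route buys a cleaner conceptual explanation of why there is exactly one sign change in $(0,1)$, at the cost of two changes of variable; the paper's is more pedestrian but stays entirely within power functions. One cosmetic slip: your series expansion should read $\sinh(au)=\sum_{k\ge 0}(au)^{2k+1}/(2k+1)!$ (your displayed version is off by a factor of $a$ termwise), but the comparison of coefficients you draw from it, $(p-1)^{2k+1}\ge(p-1)$ with strict inequality for $k\ge 1$, is exactly right and the conclusion stands.
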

We will prove this technical lemma at the end of the subsection.
\begin{defn}\label{Rho}
Define the function~$\rho\colon [-1,1]\to [R_0,1]$ by the 
formula~$\lambda(\rho(R)) = \lambda(R)$ 
when~$R \in [-1,R_0]$ and~$\rho(R) = R$ when~$R \in [R_0,1]$. 
\end{defn}
Note that~$\lambda(-1) = \lambda(1)$. The function~$\rho$ first decreases 
from~$1$ downto~$R_0$ on~$[-1,R_0]$ and then increases back to~$1$ on~$[R_0,1]$.
\begin{thm}\label{BellmanTheorem}
For any~$R \in [-1,1]$,~$a_1,a_2 \in \mathbb{R}$ such that~$a_1a_2 > 0$, 
the function~$\BellcPlus$ is linear on the segment~$\ell_c(a_1,a_2,R)$ 
with the endpoints
\begin{equation}\label{corabpoints}
a=(a_1,a_2,|a_1|^p,|a_2|^q), 
\qquad b=(-\rho(R)a_1,-\phi(R)a_2,|\rho(R)a_1|^p,|R|^p|a_2|^q).
\end{equation} 
In other words,
\begin{equation}\label{Linearity}
\BellcPlus(\tau a +(1-\tau)b) = a_1a_2(\tau+(1-\tau)\rho(R)\phi(R)), 
\quad \tau \in [0,1].
\end{equation}
If~$R\in [R_0,1]$, then~$\BellcPlus(x)=x_3^{\frac{1}{p}}x_4^{\frac{1}{q}}$ 
on~$\ell_c(a_1,a_2,R)$. The segments~$\ell_c(a_1,a_2,R)$ with~$a_1 > 0$, 
$a_2 > 0$, and~$R\in [-1,1]$ cover the domain
\begin{equation*}
\Big\{x\in \Omega \,\Big|\; -1 < \frac{x_2}{x_4^{\frac{1}{q}}} 
\leq \frac{x_1}{x_3^{\frac{1}{p}}} < 1\Big\}
\end{equation*}
entirely. The segments~$\ell_c(a_1,a_2,R)$ with~$a_1 < 0$,~$a_2 < 0$, 
and~$R \in [-1,1]$ cover the domain
\begin{equation*}
\Big\{x\in \Omega \,\Big|\; -1 <  \frac{x_1}{x_3^{\frac{1}{p}}} 
\leq \frac{x_2}{x_4^{\frac{1}{q}}} < 1\Big\}
\end{equation*}
entirely.
\end{thm}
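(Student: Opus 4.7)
My plan is the standard two-sided Bellman-function argument based on the minimality clause in Lemma~\ref{SimpleProperties}: bound $\BellcPlus$ from below by an explicit test pair $(f,g)$ and from above by an affine majorant that agrees with~\eqref{Linearity} on the segment; coincidence of the two bounds forces $\BellcPlus$ to be affine on $\ell_c(a_1,a_2,R)$. Both endpoints lie on the skeleton (for $b$ one uses $(p-1)q=p$ to check $|\phi(R)a_2|^q=|R|^p|a_2|^q$), so $\BellcPlus(a)=a_1a_2$ and $\BellcPlus(b)=\rho(R)\phi(R)a_1a_2$ follow from the boundary conditions and are consistent with~\eqref{Linearity} at $\tau=1,0$. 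For the lower bound I would take on $I=[0,1]$ the two-level test functions
\begin{equation*}
f=a_1\chi_{[0,\tau)}-\rho(R)a_1\chi_{[\tau,1]},\qquad g=a_2\chi_{[0,\tau)}-\phi(R)a_2\chi_{[\tau,1]},
\end{equation*}
whose averages directly reproduce the four coordinates of $\tau a+(1-\tau)b$ and yield $\av{fg}{I}=a_1a_2(\tau+(1-\tau)\rho(R)\phi(R))$.

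\textbf{Upper bound via an affine majorant.} For the reverse inequality I would construct an affine function $L(x)=\alpha_1x_1+\alpha_2x_2+\alpha_3x_3+\alpha_4x_4+\beta$ satisfying
\begin{equation*}
F(t_1,t_2):=L(t_1,t_2,|t_1|^p,|t_2|^q)-t_1t_2\geq 0\quad\text{for all }(t_1,t_2)\in\mathbb{R}^2,
\end{equation*}
with equality at $(a_1,a_2)$ and at $(-\rho(R)a_1,-\phi(R)a_2)$. Integrating the pointwise inequality $\alpha_1f+\alpha_2g+\alpha_3|f|^p+\alpha_4|g|^q+\beta\geq fg$ over $I$ then yields $L(x)\geq\BellcPlus(x)$ for every admissible pair, and the equality conditions force $L$ restricted to $\ell_c(a_1,a_2,R)$ to coincide with~\eqref{Linearity}. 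The four first-order conditions at the two prescribed zeros determine $\alpha_1,\dots,\alpha_4$ uniquely in terms of $a_1,a_2,R$, and matching the two prescribed values then produces a single scalar consistency relation which, after simplification via positive homogeneity, reduces exactly to $\lambda(-\rho(R))=\lambda(R)$. This is precisely the equation defining $\rho$ in Definition~\ref{Rho}; it is the ansatz itself that forces the auxiliary function $\lambda$ and the number $R_0$ into the statement.

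\textbf{Main obstacle and remaining clauses.} The hardest step is the global inequality $F\geq 0$: the first-order conditions only pin down two critical points, not their minimality. I would normalize $|a_1|=|a_2|=1$ by homogeneity and split the $(t_1,t_2)$-plane by the signs of $t_1,t_2$; in each quadrant $F$ is smooth, and careful analysis of its critical locus, using the monotonicity of $\lambda$ on $(-1,R_0)$ and on $(R_0,1)$ from Lemma~\ref{RealR0}, should rule out any further minimum. The equation~\eqref{eqR0} for $R_0$ plays the role of a threshold here: for $R\in[R_0,1]$ the two prescribed zeros of $F$ merge into a one-parameter family and $F$ becomes essentially a perfect square, while for $R\in[-1,R_0)$ the two zeros are genuinely distinct and the monotonicity of $\lambda$ prevents a third. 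For the covering claim, a parameter count ($a_1,a_2,R,\tau$ give four real parameters matching $\dim\Omegac=4$), combined with continuity and monotonicity of the coordinate map, identifies the image with each of the two half-domains in the statement (the sign of $a_1a_2$ dictating which one). Finally, for $R\in[R_0,1]$ one has $\rho(R)=R$ and $\rho(R)\phi(R)=|R|^p$, so along the segment $x_3=(\tau+(1-\tau)|R|^p)|a_1|^p$ and $x_4=(\tau+(1-\tau)|R|^p)|a_2|^q$; substituting into~\eqref{Linearity} gives $a_1a_2(\tau+(1-\tau)|R|^p)=x_3^{1/p}x_4^{1/q}$ since $a_1a_2>0$.
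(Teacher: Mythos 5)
Your overall architecture is the same as the paper's: an affine majorant $L$ touching the skeleton at the two endpoints, first-order conditions determining the coefficients, and a scalar compatibility relation that turns out to be the equation $\lambda(\rho(R))=\lambda(R)$ defining $\rho$ (you wrote $\lambda(-\rho(R))=\lambda(R)$, which is a sign slip: with $b_1=-\rho(R)a_1$ the relevant ratio is $R_1=-b_1/a_1=\rho(R)$, and one checks $\phi(R)|\phi(R)|^{q-2}=R$ so the second ratio is $R$ itself). Your lower bound via two-step test functions is fine and equivalent to invoking concavity between two skeleton points. So the skeleton of the argument is right. However, the two places you flag as "the hardest step" and then dispose of with "careful analysis should rule out any further minimum" and "a parameter count combined with continuity and monotonicity" are exactly where essentially all of the paper's work lies, and your sketches do not close them.

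For the positivity $F\geq 0$: splitting by quadrants and studying the critical locus of a two-variable function is not the workable route, and the monotonicity of $\lambda$ is not the tool that does the job (it only makes $\rho$ well defined). The paper first minimizes $F$ in $t_2$ for fixed $t_1$, reducing to a one-variable function $H(t_1)=t_0-\tfrac1p|t_1-\alpha_2|^p(q\alpha_4)^{1-p}+\alpha_3|t_1|^p+\alpha_1 t_1$; then it observes that $H''$ is a difference of two power functions and hence has at most two roots, so $H$ (which has double zeros at $a_1$ and $b_1$) cannot change sign; finally the sign at infinity is governed by whether $p\alpha_3(q\alpha_4)^{p-1}\geq 1$, which translates into the monotonicity of the \emph{different} auxiliary function $\psi(R)=|1+\phi(R)|/|1+R|^{p-1}$, and it is here that $\rho(R)\geq R$ enters. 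None of this is present in your sketch. For the covering claim: a parameter count does not give surjectivity. The paper reduces by homogeneity to the triangle $\{-1<y_2<y_1<1\}$ and proves that $\eta(\tau,R)$ hits every point by an intermediate value argument, which itself requires the explicit computation $\partial\eta_1/\partial\tau<0$ (to define $\tau_1(R,y_1)$) and the boundary values $\eta(\cdot,\pm1)$; the accompanying monotonicity of $Y_2(R)=\eta_2(\tau_1(R,y_1),R)$ is a long computation (the $\Delta$ and $\Upsilon$ analysis). As written, your proposal correctly identifies the structure of the proof but leaves its two substantive analytic components unproved.
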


We see that Theorem~\ref{BellmanTheorem} describes the foliation 
of~$\BellcPlus$: the segments~$\ell_c(a_1,a_2,R)$ are the one dimensional 
subsets of~$\Omegac$ such that~$\BellcPlus$ is linear on each of them. 
Later we will see that this is not all the truth: some of these segments 
form linearity domains of dimension three (see Section~\ref{s34} below). 

The formula
\begin{equation}\label{SymmetryFormula}
\BellcPlus(x_1,x_2,x_3,x_4) = -\BellcMinus(-x_1,x_2,x_3,x_4)
\end{equation} 
leads to the corollary.
\begin{cor}\label{DescriptionOfConvexHull}
The segments~$\ell_c(a_1,a_2,R)$ with~$a_1a_2 < 0$ cover the interior 
of~$\Omegac$ entirely. The function~$\BellcMinus$ is linear on each 
of these segments. Moreover,
\begin{multline*}
\partial \mathbb{K} = \Big(\cup_{R \in [-1,1], a_1,a_2 \in \mathbb{R}} 
\mathfrak{L}(a_1,a_2,R)\Big)\cup
\\ \big\{(x_1,x_2,|x_1|^p,x_4,x_1x_2)\,\big|\;  
x_4\geq |x_2|^q\big\} \cup \big\{(x_1,x_2,x_3,|x_2|^q,x_1x_2)\,\big|\; 
x_3\geq |x_1|^p\big\},
\end{multline*}
where the segments~$\mathfrak{L}(a_1,a_2,R)$ are given by the formula
\begin{multline*}
\mathfrak{L}(a_1,a_2,R)= \Big((\tau -(1-\tau)\rho(R)) a_1, 
(\tau -(1-\tau)\phi(R)) a_2, (\tau +(1-\tau)\rho(R)^p) |a_1|^p,
(\tau +(1-\tau)|R|^p)|a_2|^q, 
\\
 (\tau+(1-\tau)\rho(R)\phi(R))a_1a_2\Big), \quad \tau \in [0,1].
\end{multline*}
\end{cor}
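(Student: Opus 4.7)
The plan is to deduce the corollary from Theorem~\ref{BellmanTheorem} by exploiting the symmetry \eqref{SymmetryFormula} and then assembling $\partial\mathbb{K}$ through the sandwich description supplied by Corollary~\ref{GeneralConvex2}. First I would handle the linearity claim for $\BellcMinus$. The involution $\sigma\colon(x_1,x_2,x_3,x_4)\mapsto(-x_1,x_2,x_3,x_4)$ preserves $\Omegac$, and \eqref{SymmetryFormula} rewrites as $\BellcMinus=-\BellcPlus\circ\sigma$. Direct inspection of \eqref{corabpoints} shows that $\sigma$ carries $\ell_c(a_1,a_2,R)$ onto $\ell_c(-a_1,a_2,R)$, so applying $\sigma$ to the foliation produced by Theorem~\ref{BellmanTheorem} immediately yields that the segments $\ell_c(a_1,a_2,R)$ with $a_1a_2<0$ cover $\interior\Omegac$ and that $\BellcMinus$ is linear on each with value $a_1a_2\bigl(\tau+(1-\tau)\rho(R)\phi(R)\bigr)$.

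Next, for the description of $\partial\mathbb{K}$, I would invoke Corollary~\ref{GeneralConvex2} to identify $\mathbb{K}=\{(x,y)\in\Omegac\times\mathbb{R}\mid \BellcMinus(x)\le y\le\BellcPlus(x)\}$. Its boundary splits naturally into three pieces: the upper cap (graph of $\BellcPlus$ over $\interior\Omegac$), the lower cap (graph of $\BellcMinus$ over $\interior\Omegac$), and the lateral surface sitting over the topological boundary of $\Omegac$, i.e.\ the points where $x_3=|x_1|^p$ or $x_4=|x_2|^q$. On that lateral surface one of $f,g$ is forced to be constant, so Lemma~\ref{SimpleProperties} (together with its analogue for $\BellcMinus$ noted in the remark after it) gives $\BellcPlus=\BellcMinus=x_1x_2$; lifting produces exactly the two explicit sets listed at the end of the corollary.

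Finally I would identify the upper and lower caps with $\bigcup_{R,a_1,a_2}\mathfrak{L}(a_1,a_2,R)$. By \eqref{Linearity} of Theorem~\ref{BellmanTheorem} (and its reflected counterpart established above), lifting $\ell_c(a_1,a_2,R)$ by the corresponding affine function of $\tau$ yields precisely the parametrization of $\mathfrak{L}(a_1,a_2,R)$ written in the corollary; both endpoints lie on the skeleton surface $\{(t_1,t_2,|t_1|^p,|t_2|^q,t_1t_2)\}$ from Corollary~\ref{GeneralConvex2}, using $|\phi(R)|^q=|R|^{(p-1)q}=|R|^p$ at the $\tau=0$ end. Letting $a_1a_2>0$ vary sweeps out the upper cap and $a_1a_2<0$ sweeps out the lower cap. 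The only real bookkeeping obstacle is to confirm that the two sign-constrained families of segments exhaust $\interior\Omegac$ without overlap gaps; this is immediate from the two coverage statements in Theorem~\ref{BellmanTheorem} together with the identity $\sigma(\Omegac)=\Omegac$.
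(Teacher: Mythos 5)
Your argument is correct and is exactly the route the paper intends: it derives the corollary from Theorem~\ref{BellmanTheorem} via the symmetry~\eqref{SymmetryFormula} (which carries $\ell_c(a_1,a_2,R)$ to $\ell_c(-a_1,a_2,R)$) and then assembles $\partial\mathbb{K}$ from the two graphs over $\interior\Omegac$ together with the degenerate fibers over the lateral boundary, using Corollary~\ref{GeneralConvex2}. The details you fill in (the value of $\BellcMinus$ on the reflected segments, the check that both endpoints of $\mathfrak{L}(a_1,a_2,R)$ lie on the generating surface via $|\phi(R)|^q=|R|^p$, and the collapse $\BellcPlus=\BellcMinus=x_1x_2$ when $x_3=|x_1|^p$ or $x_4=|x_2|^q$) are all sound.
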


\begin{thm}\label{BellmanTheorem2}
For any~$R \in [-1,1]$,~$a_1,a_2 \in \mathbb{R}$ such that~$a_2 > 0$, 
the function~$\BelldPlus$ is linear on the segment~$\ell_d(a_1,a_2,R)$ 
with the endpoints
\begin{equation*}
a=(a_1,|a_1|^p,|a_2|^q,a_1a_2), 
\qquad b=(-\rho(R)a_1,|\rho(R)a_1|^p,|R|^p|a_2|^q,\rho(R)\varphi(R)a_1a_2).
\end{equation*} 
The function~$\BelldMinus$ is linear on the 
segments~$\ell_d(a_1,a_2,R)$ with~$a_2 < 0$.
\end{thm}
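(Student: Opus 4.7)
The plan is to deduce Theorem~\ref{BellmanTheorem2} from the foliation of~$\BellcPlus$ and~$\BellcMinus$ already described by Theorem~\ref{BellmanTheorem} and Corollary~\ref{DescriptionOfConvexHull}, using Corollary~\ref{GeneralConvex2} to transfer linearity across the two different coordinate projections of the common five-dimensional hull~$\mathbb{K}$. Write~$\pi_c,\pi_d\colon\mathbb{R}^5\to\mathbb{R}^4$ for the projections that drop the coordinates~$x_5$ and~$x_2$, respectively, and let~$\mathcal{S}=\{(t_1,t_2,|t_1|^p,|t_2|^q,t_1t_2):t_1,t_2\in\mathbb{R}\}$ be the generating skeleton surface of~$\mathbb{K}$. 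Any line segment lying in~$\partial\mathbb{K}$ with both endpoints on~$\mathcal{S}$ is simultaneously a graph piece of~$\BellcPlus$ or~$\BellcMinus$ over its~$\pi_c$-image and a graph piece of~$\BelldPlus$ or~$\BelldMinus$ over its~$\pi_d$-image.

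Concretely, given~$(a_1,a_2,R)$ with~$a_2>0$, I lift~$\ell_c(a_1,a_2,R)\subset\Omegac$ into~$\mathbb{R}^5$ by appending~$\BellcPlus$ when~$a_1a_2>0$ and~$\BellcMinus$ when~$a_1a_2<0$. By \eqref{Linearity} and its~$\BellcMinus$-analogue from Corollary~\ref{DescriptionOfConvexHull}, the result is a genuine line segment~$\tilde\ell\subset\partial\mathbb{K}$ with endpoints
\[
\tilde A=(a_1,a_2,|a_1|^p,|a_2|^q,a_1a_2),\quad
\tilde B=(-\rho(R)a_1,-\phi(R)a_2,|\rho(R)a_1|^p,|R|^p|a_2|^q,\rho(R)\phi(R)a_1a_2),
\]
both of which lie on~$\mathcal{S}$ (using~$|\phi(R)|^q=|R|^{(p-1)q}=|R|^p$). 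Applying~$\pi_d$ to~$\tilde\ell$ reproduces exactly the segment~$\ell_d(a_1,a_2,R)$ from the statement, and the~$x_2$-coordinate along~$\tilde\ell$ is the affine function~$\tau\mapsto a_2\bigl(\tau-(1-\tau)\phi(R)\bigr)$.

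Since~$\tilde\ell\subset\partial\mathbb{K}$, Corollary~\ref{GeneralConvex2} forces, at each~$\tau\in[0,1]$, either~$x_2(\tau)=\BelldPlus(\pi_d\tilde\ell(\tau))$ or~$x_2(\tau)=\BelldMinus(\pi_d\tilde\ell(\tau))$. A short Jensen computation using strict convexity of~$t\mapsto|t|^p$ together with~$\rho(R)>0$ gives~$|x_1(\tau)|^p<x_3(\tau)$ for~$\tau\in(0,1)$, so the interior of~$\ell_d$ lies strictly off~$\FixedBoundary\Omegad$, where~$\BelldPlus>\BelldMinus$. Continuity in~$\tau$ then selects a single one of the two identifications uniformly along the entire segment~$\ell_d$.

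The main obstacle is pinning down which sheet is selected: the claim is~$\BelldPlus$ when~$a_2>0$ and~$\BelldMinus$ when~$a_2<0$. The involution~$g\mapsto -g$, which acts on~$\mathbb{R}^5$ as~$(x_2,x_5)\mapsto(-x_2,-x_5)$, preserves~$\mathbb{K}$, swaps~$\BelldPlus$ with~$\BelldMinus$, and sends~$\tilde\ell(a_1,a_2,R)$ to~$\tilde\ell(a_1,-a_2,R)$, so it suffices to treat~$a_2>0$. For that case I would read off the (constant) subgradient of~$\BellcPlus$ or~$\BellcMinus$ on~$\ell_c$ from~\eqref{Linearity}: the supporting hyperplane of~$\mathbb{K}$ that contains~$\tilde\ell$ has outward normal whose~$x_2$-component equals~$-\partial_{x_2}\BellcPlus$ in the first case and~$+\partial_{x_2}\BellcMinus$ in the second, and one verifies that this component is strictly positive whenever~$a_2>0$. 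This single sign check is the only genuinely non-formal step; everything else follows directly from Theorem~\ref{BellmanTheorem} and Corollary~\ref{GeneralConvex2}.
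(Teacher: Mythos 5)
Your proposal is correct in substance and shares its skeleton with the paper's argument: both pass through Corollary~\ref{GeneralConvex2} and Corollary~\ref{DescriptionOfConvexHull} to see that the graphs of~$\Bell_{d,\pm}$ over the interior of~$\Omegad$ are unions of the $\pi_d$-projections of the segments~$\mathfrak{L}(a_1,a_2,R)$, and both decide which sheet a given segment belongs to by a sign condition on a supporting hyperplane of~$\mathbb{K}$. Where you genuinely diverge is in that last step. The paper argues by contradiction entirely in the $d$-coordinates: assuming a segment with~$a_2<0$ lies on the graph of~$\BelldPlus$, it writes down a fresh supporting functional $x_2\le t_0+t_1x_1+t_3x_3+t_4x_4+t_5x_5$, solves the first-order conditions at the two endpoints for~$t_4$ and~$t_5$, and contradicts the majorization for large~$|z|$ because~$\sign(qt_4)=\sign(a_2)$. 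You instead recycle the supporting functional already built in Section~\ref{FirstAssertion} and read off the $x_2$-component of its normal. Your sign claim does check out and deserves to be written down, since you flag it as the one non-formal step but do not perform it: by~\eqref{t12}, on~$\ell_c(a_1,a_2,R)$ with~$a_1a_2>0$ one has $t_2=a_1\frac{R-\rho(R)}{1+R}$, which is strictly negative for~$a_1>0$ and~$R\in(-1,R_0)$, so the normal $(-t_1,-t_2,-t_3,-t_4,1)$ of the hyperplane $\{x_5=\Psi\}$ has $n_2=-t_2>0$ and every point of~$\tilde\ell$ maximizes~$x_2$ in its fiber; the case~$a_1<0<a_2$ follows through~\eqref{SymmetryFormula} exactly as you indicate. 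This pointwise conclusion actually makes your continuity/connectedness step redundant. Your approach buys economy (no new first-order conditions to solve and a transparent geometric criterion, $n_2>0$ versus $n_2<0$); the paper's buys robustness at the parameter~$R=-1$, where~\eqref{t12} degenerates ($1+R=0$) while the paper's formulas $t_5=-\frac{1+R}{(\rho-R)a_1}$, $qt_4=\frac{1+\rho}{(\rho-R)a_2|a_2|^{q-2}}$ remain finite.

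Two soft spots, one of which is shared with the paper. First, for~$R\in[R_0,1]$ one has~$\rho(R)=R$, hence~$t_2=0$ and your sign test is inconclusive; correspondingly the segment~$\ell_d(a_1,a_2,R)$ then lies entirely in the piece $|x_5|=x_3^{1/p}x_4^{1/q}$ of the topological boundary of~$\Omegad$, where~$\BelldPlus=\BelldMinus$, so your assertion that the interior of~$\ell_d$ avoids only~$\FixedBoundary\Omegad$ is not enough to guarantee strict separation of the two sheets there. The paper's proof quietly sidesteps this regime as well (its linear system has no solution for~$R\in[R_0,1]$ precisely because no such supporting functional exists), so this is not a defect specific to your argument, but you should restrict the continuity argument to~$R\in[-1,R_0)$ and treat~$R\in[R_0,1]$ as a boundary/limit case. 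Second, the degenerate parameter~$a_1=0$ is allowed by the statement but is not covered by Theorem~\ref{BellmanTheorem}; it produces a segment with~$x_1=x_3=x_5=0$ varying only in~$x_4$ and should be dispatched separately.
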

Theorem~\ref{BellmanTheorem} has two assertions. The proof of each of 
them is presented in its own subsection (Subsections~\ref{FirstAssertion} 
and~\ref{SecondAssertion}). Theorem~\ref{BellmanTheorem2} is proved in 
Section~\ref{s8}.

\begin{proof}[Proof of Lemma~\textup{\ref{RealR0}}] 
We rewrite~\eqref{eqR0} as~$\kappa(R_0)=0$, where
\begin{equation*}
\kappa(R) = (p-1)|R|^{\frac{p}{2}-1}(1+R)-1-R|R|^{p-2}, \quad R \in (-1,1).
\end{equation*}
We differentiate~$\kappa$ to find 
\begin{equation*}
\kappa'(R)
=(p-1)|R|^{\frac p2-2}
\Big(\Big(\frac p2-1\Big)\sign R+\frac p2|R|-|R|^{\frac p2}\Big).
\end{equation*}
This is greater than~$0$ when~$R\in(0,1)$.
We also have~$\kappa(0)=-1<0$ and~$\kappa(1)=2(p-2)>0$. Therefore,~$\kappa$ 
has a unique root~$R_0$ in~$(0,1)$. The function~$\kappa$ changes 
sign from negative to positive at~$R_0$.

We also compute
\begin{equation}\label{eqlambdaprime}
\lambda'(R)=\frac{(p-1)^2|R|^{p-2}}{\big(1+R|R|^{p-2}\big)^2}-\frac{1}{(1+R)^2}
=\frac{\kappa(R)\Big((p-1)|R|^{\frac p2-1}(1+R)+
\big(1+R|R|^{p-2}\big)\Big)}{\big(1+R|R|^{p-2}\big)^2(1+R)^2}.
\end{equation}
Thus,~$\sign(\lambda'(R)) = \sign(\kappa(R))$.

Note that~$\kappa'(R) < 0$ when~$R\in (-1,0)$. Moreover,~$\kappa(-1)=0$, 
so~$\kappa(R)<0$ when~$R\in(-1,0)$. We conclude that~$\lambda'(R)<0$ 
 for~$R \in(-1,0)$. 

Thus,~$\lambda$
decreases on~$(-1,R_0)$ and increases on~$(R_0,1)$.
\end{proof}

\subsection{First assertion of Theorem~\ref{BellmanTheorem}}
\label{FirstAssertion}
To prove that the function~$\BellcPlus$ is linear on a certain 
segment~$\ell(a,b)$ that connects two points~$a$ and~$b$ 
on~$\FixedBoundary\Omegac$, we will construct an affine function~$\Psi$ 
(depending on~$a$ and~$b$) such that
\begin{equation*}
\Psi(a) = \BellcPlus(a),\ \Psi(b) = \BellcPlus(b),\hbox{ and } \Psi(x) 
\geq \BellcPlus(x)\quad \hbox{for any } x \in \Omegac.
\end{equation*}
By the third and fourth statements of Lemma~\ref{SimpleProperties}, 
it suffices to verify the inequality~$\Psi(x) \geq \BellcPlus(x)$ 
for~$x\in \FixedBoundary \Omegac$ only. Let
\begin{equation}\label{Psifunction}
\Psi(x) = t_0+t_1 x_1+t_2x_2+t_3x_3+t_4x_4.
\end{equation}
By the preceeding,~$\Psi$ majorizes~$\BellcPlus$ if and only if the 
function~$\Phi\colon \mathbb{R}^2 \to \mathbb{R}$, given by the formula
\begin{equation}\label{eq02}
\Phi(x_1,x_2) = \Psi(x_1,x_2,|x_1|^p,|x_2|^q) -x_1x_2 = 
t_0+t_1 x_1+t_2x_2+t_3|x_1|^p+t_4|x_2|^q-x_1x_2,
\end{equation} 
is non-negative. Moreover, if~$\Phi(a_1,a_2) = \Phi(b_1,b_2) = 0$, 
then~$\BellcPlus$ is linear on the segment~$\ell(a,b)$. 

\begin{lem}\label{lem21}
The function~$\Psi$ defined by~\eqref{Psifunction} 
majorizes~$\BellcPlus$ on~$\Omegac$ if and only if the following two 
conditions hold\textup:
\begin{enumerate}
\item[\rm 1)] $t_3, t_4 >0$\textup;
\item[\rm 2)] $H(x_1) \geq 0$ for any~$x_1 \in \mathbb{R}$, where
\begin{equation}\label{Hfunction}
H(x_1)
=t_0-\frac{1}{p}\frac{|x_1-t_2|^p}{(qt_4)^{p-1}}+t_3|x_1|^p+t_1x_1.
\end{equation} 
\end{enumerate} 
\end{lem}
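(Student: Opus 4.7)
The plan is to reduce the majorization condition on $\Omegac$ to a pointwise inequality on $\mathbb{R}^2$, and then minimize in one variable to obtain the one-dimensional condition in $H$.

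First, I invoke the fourth assertion of Lemma~\ref{SimpleProperties}: $\BellcPlus$ is the pointwise minimal concave function on $\Omegac$ with the boundary data $\BellcPlus(x_1,x_2,|x_1|^p,|x_2|^q)=x_1x_2$. Since $\Psi$ is itself concave (being affine), we have $\Psi\ge \BellcPlus$ on all of $\Omegac$ if and only if $\Psi(x)\ge x_1x_2$ on the skeleton $\FixedBoundary\Omegac$. Parametrizing the skeleton by $(x_1,x_2)\in\mathbb{R}^2$ via $x_3=|x_1|^p$, $x_4=|x_2|^q$, this is exactly the condition $\Phi(x_1,x_2)\ge 0$ on $\mathbb{R}^2$, with $\Phi$ as in~\eqref{eq02}.

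Next I show that $t_3>0$ and $t_4>0$ are both necessary for $\Phi\ge 0$. For $t_4$: pick any $x_1\ne t_2$. As a function of $x_2$, $\Phi(x_1,\cdot)$ equals $t_4|x_2|^q+(t_2-x_1)x_2+\text{const}$, so if $t_4<0$ it tends to $-\infty$ as $|x_2|\to\infty$, while if $t_4=0$ the linear coefficient $t_2-x_1\ne 0$ makes it unbounded below. By symmetry, or by a parallel argument fixing an appropriate $x_2$, one obtains $t_3>0$.

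Now assume $t_3,t_4>0$ and minimize $\Phi(x_1,\cdot)$ for fixed $x_1$. The critical equation $\partial_{x_2}\Phi=0$ gives $qt_4\,\varphi_q(x_2)=x_1-t_2$, where $\varphi_q(s)=s|s|^{q-2}$. Since $\varphi_p$ and $\varphi_q$ are mutual inverses (as noted in Section~\ref{subsec-1.2}), the minimizer is
\begin{equation*}
x_2^{\star}=\varphi_p\!\left(\frac{x_1-t_2}{qt_4}\right),
\end{equation*}
and convexity of $t_4|x_2|^q$ in $x_2$ (with $t_4>0$) guarantees this is indeed a global minimum. Substituting and using the identity $1-\tfrac{1}{q}=\tfrac{1}{p}$, a short computation gives
\begin{equation*}
\min_{x_2}\Phi(x_1,x_2)=t_0+t_1x_1+t_3|x_1|^p-\frac{|x_1-t_2|^p}{p(qt_4)^{p-1}}=H(x_1).
\end{equation*}
Therefore $\Phi\ge 0$ on $\mathbb{R}^2$ is equivalent to $H\ge 0$ on $\mathbb{R}$. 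Combining with the necessity of $t_3,t_4>0$ established in the previous paragraph completes both directions.

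The only mildly delicate step is verifying the identity reducing the minimum to $H(x_1)$; it relies solely on the conjugacy relation $q(p-1)=p$ and the definition of $\varphi_p$, so no further obstacle is anticipated.
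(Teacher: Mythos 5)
Your proof is correct and follows essentially the same route as the paper: reduce the majorization to $\Phi\ge 0$ on the skeleton via minimality of $\BellcPlus$ among concave functions with the given boundary data, deduce $t_3,t_4>0$ from the behavior of $\Phi$ at infinity in each variable, and then minimize over $x_2$ to obtain $H$. Your computation of the minimizer $x_2^{\star}=\varphi_p\bigl((x_1-t_2)/(qt_4)\bigr)$ and the resulting formula for $H$ match the paper's.
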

\begin{proof}
Fix~$x_2 \ne t_1$ and consider the asymptotic behavior of~$\Phi$ as~$x_1$ 
tends to infinity.
The senior term of~$\Phi$ should be non-negative, which leads 
to~$t_3 \geq 0$. The equality~$t_3=0$ 
 is impossible since in this case~$\Phi$ is a linear with respect 
to~$x_1$ function with non-zero senior
 coefficient. Such a function cannot be non-negative on the entire 
axis. Thus,~$t_3>0$. Similarly,~$t_4>0$. 

We fix~$x_1$ and see that the function~$x_2 \mapsto \Phi(x_1,x_2)$ 
attains its minimum at the point 
\begin{equation*}
x_2=\sign(x_1-t_2)\left|\frac{x_1-t_2}{qt_4}\right|^{p-1}.
\end{equation*}
We plug this expression back into~$\Phi$ and find
\begin{equation*}
\min_{x_2}\Phi(x_1,x_2)
=\Phi\left(x_1,\sign(x_1-t_2)\frac{|x_1-t_2|^{p-1}}{(qt_4)^{p-1}}\right)
=H(x_1).
\end{equation*} 
\end{proof}

We study the equations 
\begin{equation}\label{bc}
\Phi(a_1,a_2)= \Phi(b_1,b_2)=0
\end{equation} 
under the condition~$\Phi \geq 0$. In particular,~$(a_1,a_2)$  
and~$(b_1,b_2)$ are minima of~$\Phi$. Thus,
\begin{align}
\frac{\partial \Phi}{\partial x_1}(a_1,a_2)=& \; 
\frac{\partial \Phi}{\partial x_1}(b_1,b_2)=0 \label{bc1};
\\
\frac{\partial \Phi}{\partial x_2}(a_1,a_2)=& \; 
\frac{\partial \Phi}{\partial x_2}(b_1,b_2)=0 \label{bc2}.
\end{align}

The derivative of~$\Phi$ 
satisfies~$\frac{\partial \Phi}{\partial x_1}(x_1,x_2) = 
t_1 +p |x_1|^{p-2}x_1 t_3- x_2$. Note that~\eqref{bc1} does not have 
solutions for which only one of the identities~$a_1=b_1$ or~$a_2=b_2$ 
hold (i.e. either~$a_1=b_1$ and~$a_2 = b_2$ or~$a_1\ne b_1$ and~$a_2 \ne b_2$). 
Consequently,~$a_1\ne b_1$ and~$a_2 \ne b_2$. We solve~\eqref{bc1} 
for~$t_1$ and~$t_3$:
\begin{equation}\label{t1}
t_1 = \frac{a_2b_1|b_1|^{p-2}-a_1b_2|a_1|^{p-2}}{b_1|b_1|^{p-2}-a_1|a_1|^{p-2}};
\end{equation}
\begin{equation}\label{t3}
t_3 = \frac{b_2-a_2}{p(b_1|b_1|^{p-2}-a_1|a_1|^{p-2})}.
\end{equation}
Similarly, we solve~\eqref{bc2} for~$t_2$ and $t_4$:
\begin{equation}\label{t2}
t_2 = \frac{a_1b_2|b_2|^{q-2}-a_2b_1|a_2|^{q-2}}{b_2|b_2|^{q-2}-a_2|a_2|^{q-2}};
\end{equation}
\begin{equation}\label{t4}
t_4 = \frac{b_1-a_1}{q(b_2|b_2|^{q-2}-a_2|a_2|^{q-2})}.
\end{equation}

We also have the system~\eqref{bc} that defines~$t_0$. 
We do not need the expression for~$t_0$, however, the compatibility 
condition is of crucial importance:
\begin{equation}\label{Preeqcup1}
(b_1-a_1)t_1+(b_2-a_2)t_2+(|b_1|^p-|a_1|^p)t_3+
(|b_2|^q-|a_2|^q)t_4+(a_1a_2-b_1b_2)=0.
\end{equation}
Using formulas~\eqref{t1} and~\eqref{t3}, we get
\begin{equation*}
(|b_1|^p-|a_1|^p)pt_3+(b_1-a_1)t_1=b_1b_2-a_1a_2.
\end{equation*}
Similarly, formulas~\eqref{t2} and~\eqref{t4} lead to
\begin{equation*}
(|b_2|^p-|a_2|^p)qt_4+(b_2-a_2)t_2=b_1b_2-a_1a_2.
\end{equation*}
With these identities in hand, we rewrite~\eqref{Preeqcup1} as
\begin{equation}\label{eqcup1}
\frac{1}{q}(b_1-a_1)t_1+\frac{1}{p}(b_2-a_2)t_2=0.
\end{equation}

We treat the~$t_j$ as functions of~$a$ and~$b$ in what follows. We 
summarize our computations in the following lemma.

\begin{lem}\label{lem22}
The function~$\BellcPlus$  is linear on~$\ell(a,b)$ if and only if 
the parameters~$t_1,\,t_2,\,t_3,\,t_4$ given by~\eqref{t1}, \eqref{t2}, 
\eqref{t3}, and~\eqref{t4} satisfy the conditions below.
\begin{enumerate}
\item[\rm1)] $t_3>0$, $t_4 >0$.
\item[\rm2)] $H(x_1)\geq 0$ for any~$x_1\in\mathbb{R}$.
\item[\rm3)] Equality \eqref{eqcup1} holds.
\end{enumerate} 
\end{lem}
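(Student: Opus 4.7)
The plan is to consolidate the analysis already carried out in the preceding paragraphs into a single equivalence. By concavity of $\BellcPlus$ together with the minimality property in Lemma~\ref{SimpleProperties}, linearity of $\BellcPlus$ on $\ell(a,b)$ is equivalent to the existence of an affine function $\Psi$ of the form \eqref{Psifunction} that majorizes $\BellcPlus$ on all of $\Omegac$ and agrees with it at the two skeleton points $a$ and $b$. The forward direction of this reformulation follows by taking a supporting hyperplane of $\BellcPlus$ at an interior point of $\ell(a,b)$ and noting that an affine function which equals a concave function at an interior point of a segment and dominates it must in fact coincide with it on the whole segment; the converse is immediate, since such a $\Psi$ squeezes $\BellcPlus$ between itself (above) and the chord joining $\BellcPlus(a)$ and $\BellcPlus(b)$ (below, by concavity).

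For the global majorization $\Psi \geq \BellcPlus$ on $\Omegac$, I would simply invoke Lemma~\ref{lem21}, which rewrites it as the two conditions $t_3, t_4 > 0$ and $H \geq 0$ on $\mathbb{R}$; these are exactly items 1) and 2) of the statement.

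For the two boundary equalities, I would use the skeleton values of $\BellcPlus$ together with \eqref{eq02} to rewrite $\Psi(a)=a_1a_2$ and $\Psi(b)=b_1b_2$ as $\Phi(a_1,a_2)=\Phi(b_1,b_2)=0$. Under conditions 1) and 2), $\Phi\ge 0$ everywhere on $\mathbb{R}^2$, so these two zeros are global minima; the first-order conditions $\nabla\Phi(a_1,a_2)=\nabla\Phi(b_1,b_2)=0$ are precisely the system \eqref{bc1}-\eqref{bc2}. As already shown in the discussion immediately preceding the lemma, this system first forces $a_1\ne b_1$ and $a_2\ne b_2$ (otherwise we would need both coordinates to coincide simultaneously), and then has the unique solution $t_1,t_2,t_3,t_4$ given by \eqref{t1}-\eqref{t4}.

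The last step is to pin down $t_0$. One uses $\Phi(a_1,a_2)=0$ as its defining equation, and then the remaining relation $\Phi(b_1,b_2)=0$ collapses to a single scalar compatibility condition on $a$ and $b$, namely \eqref{Preeqcup1}; substituting the explicit formulas \eqref{t1}-\eqref{t4} and using the two intermediate identities derived above simplifies it to \eqref{eqcup1}. This is precisely condition 3). The main thing requiring care is not a deep obstacle but rather the bookkeeping for the nondegeneracy $a_1\ne b_1,\,a_2\ne b_2$ and for the reduction from \eqref{Preeqcup1} to \eqref{eqcup1}; both have already been worked out in the text preceding the lemma, so the proof amounts to assembling these reductions in the correct order.
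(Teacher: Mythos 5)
Your proposal is correct and follows essentially the same route as the paper, which presents this lemma as a summary of the computations immediately preceding it: the reduction to an affine majorant $\Psi$ touching at $a$ and $b$, the invocation of Lemma~\ref{lem21} for conditions 1) and 2), the first-order conditions \eqref{bc1}--\eqref{bc2} determining $t_1,\dots,t_4$, and the reduction of the compatibility relation \eqref{Preeqcup1} to \eqref{eqcup1}. The only element you make explicit that the paper leaves implicit is the supporting-hyperplane argument for the ``only if'' direction, which is a welcome (and correct) addition.
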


To prove the first assertion of Theorem~\ref{BellmanTheorem}, we need to 
restate the conditions of Lemma~\ref{lem22}. As for the first 
condition~$t_3>0$ and~$t_4>0$,
it is equivalent to~$(b_1-a_1)(b_2-a_2)>0$, see 
formulas~\eqref{t3} and~\eqref{t4}. We introduce new variables 
\begin{equation}\label{Rdef}
R_1=-\frac{b_1}{a_1}, \qquad R_2 = -\frac{b_2|b_2|^{q-2}}{a_2|a_2|^{q-2}}.
\end{equation} 
 We express~$t_1,t_2,t_3,t_4$ in the new variables 
(recall~$\varphi(R) = R|R|^{p-2}$):
\begin{align}
t_1 =& \;a_2 \frac{\phi(R_1)-\phi(R_2)}{1+\phi(R_1)}\,;& \qquad
t_2 =& \;a_1 \frac{R_2-R_1}{1+R_2}\,;\label{t12}
\\
pt_3 =& \;\frac{a_2}{a_1|a_1|^{p-2}} \cdot \frac{1+\phi(R_2)}{1+\phi(R_1)}\,;& 
\qquad
qt_4 =& \;\frac{a_1}{a_2|a_2|^{q-2}} \cdot \frac{1+R_1}{1+R_2}\,.\label{t34}
\end{align}
Note that division by~$a_1$ and~$a_2$ is eligible since~$a_1a_2 > 0$ in 
Theorem~\ref{BellmanTheorem}.

\begin{lem}\label{Eqcup3Lem}
The third condition in Lemma~\textup{\ref{lem22}} is equivalent to 
\begin{equation}\label{eqcup3}
\lambda(R_1)=\lambda(R_2).
\end{equation}
\end{lem}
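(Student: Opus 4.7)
The plan is a direct algebraic calculation: I rewrite the compatibility equation \eqref{eqcup1} in the $R_1,R_2$ variables and recognize the result as $\lambda(R_1)-\lambda(R_2)=0$, up to a nonzero rescaling. The three ingredients are the formulas \eqref{t12} for $t_1,t_2$, the differences $b_1-a_1$ and $b_2-a_2$ re-expressed via $R_1,R_2$, and the conjugacy identity $p/q=p-1$.

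First, from $R_1=-b_1/a_1$ I immediately get $b_1-a_1=-a_1(1+R_1)$. The second difference needs one observation: since $(p-1)(q-1)=1$, the map $\varphi(R)=R|R|^{p-2}$ is the compositional inverse of $x\mapsto x|x|^{q-2}$. Applying this to the defining identity $R_2=-b_2|b_2|^{q-2}/(a_2|a_2|^{q-2})$ yields $b_2=-\varphi(R_2)a_2$, hence $b_2-a_2=-a_2(1+\varphi(R_2))$. Substituting these together with \eqref{t12} into \eqref{eqcup1} and cancelling the common factor $-a_1a_2$ (which is nonzero by the standing assumption $a_1a_2>0$ of Theorem \ref{BellmanTheorem}) transforms \eqref{eqcup1} into
\begin{equation*}
\frac{1}{q}\cdot\frac{(1+R_1)(\varphi(R_1)-\varphi(R_2))}{1+\varphi(R_1)}+\frac{1}{p}\cdot\frac{(1+\varphi(R_2))(R_2-R_1)}{1+R_2}=0.
\end{equation*}

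Next, I multiply through by $\frac{p}{(1+R_1)(1+R_2)(1+\varphi(R_1))(1+\varphi(R_2))}$; this is a genuine rescaling because the derivation preceding Lemma \ref{lem22} already established $a_1\ne b_1$ and $a_2\ne b_2$, which translate to $R_1\ne-1$ and $\varphi(R_2)\ne-1$ (equivalently $R_2\ne-1$). Using $p/q=p-1$, the resulting identity reads
\begin{equation*}
(p-1)\cdot\frac{\varphi(R_1)-\varphi(R_2)}{(1+\varphi(R_1))(1+\varphi(R_2))}+\frac{R_2-R_1}{(1+R_1)(1+R_2)}=0,
\end{equation*}
and I recognize, by combining the two fractions in each group separately, that the left-hand side equals $\lambda(R_1)-\lambda(R_2)$ for $\lambda$ given by \eqref{lambdadef}. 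This yields the claimed equivalence.

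The main obstacle is conceptual rather than computational: one must correctly invert the nonlinearity $x\mapsto x|x|^{q-2}$ to pass between $R_2$ and the ratio $b_2/a_2$, and one must verify that every denominator that shows up is genuinely nonzero. Both are handled by the conjugacy of $\varphi$ with $x\mapsto x|x|^{q-2}$ and by the distinctness $a_1\ne b_1$, $a_2\ne b_2$ obtained just before Lemma \ref{lem22}. Once those two points are in place, the equivalence reduces to a short bookkeeping of fractions.
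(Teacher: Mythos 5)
Your argument is the same as the paper's: divide \eqref{eqcup1} by $a_1a_2$, re-express everything through $R_1,R_2$ via \eqref{t12} and the relations $b_1=-R_1a_1$, $b_2=-\varphi(R_2)a_2$, and recognize the result as $\lambda(R_1)-\lambda(R_2)=0$. The only flaw is a bookkeeping slip in the normalizing factor: multiplying your intermediate identity by $\frac{p}{(1+R_1)(1+R_2)(1+\varphi(R_1))(1+\varphi(R_2))}$ does \emph{not} yield the display you then write down; the correct factor is $\frac{p}{(1+R_1)(1+\varphi(R_2))}$ (equivalently, divide by $(1+R_1)(1+\varphi(R_2))/p$, as in the paper). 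Since the nonvanishing of that factor follows from exactly the conditions $R_1\ne-1$, $R_2\ne-1$ you already verified, and your final identity and its identification with $\lambda(R_1)=\lambda(R_2)$ are correct, the proof stands once the factor is corrected.
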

\begin{proof}
We divide~\eqref{eqcup1} by~$a_1a_2$, express everything in terms 
of~$R_1$ and~$R_2$, and obtain
\begin{equation*}
\frac{1}{q}(1+R_1)
\frac{\phi(R_1)-\phi(R_2)}{1+\phi(R_1)}+
\frac{1}{p}(1+\phi(R_2))\frac{R_2-R_1}{1+R_2}=0,
\end{equation*}
which, after division by~$(1+R_1)(1+\phi(R_2))/p$ appears to be 
$\lambda(R_1)=\lambda(R_2)$.
\end{proof}

\begin{lem}\label{Hsign}
If~$a$ and~$b$ are such that~$t_3,\,t_4>0$, then~$H$ does not change 
sign on the real line.
\end{lem}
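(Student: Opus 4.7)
The plan is to exploit two features: (i) $a_1$ and $b_1$ are double zeros of $H$, and (ii) $H'$ has at most three zeros on $\mathbb{R}$; combined with Rolle's theorem these force $H$ to be one-signed. For (i), the boundary conditions \eqref{bc} and the optimality conditions \eqref{bc1}, \eqref{bc2} assert that $(a_1,a_2)$ and $(b_1,b_2)$ are simultaneously zeros and critical points of $\Phi$. Since, as computed in the proof of Lemma~\ref{lem21}, $H(x_1) = \min_{x_2}\Phi(x_1,x_2)$ with unique minimizer $x_2^\star(x_1)$ satisfying $x_2^\star(a_1)=a_2$ and $x_2^\star(b_1)=b_2$ by \eqref{bc2}, it follows that $H(a_1)=H(b_1)=0$, and by \eqref{bc1} also $H'(a_1)=H'(b_1)=0$.

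For (ii), differentiating twice I get
\begin{equation*}
H''(x_1) = (p-1)\Big(p\, t_3\, |x_1|^{p-2} - \frac{|x_1 - t_2|^{p-2}}{(qt_4)^{p-1}}\Big),
\end{equation*}
which is continuous since $p \geq 2$. Because $t_3, t_4 > 0$, the equation $H''(x_1)=0$ reduces (for $p>2$) to $|x_1| = C\,|x_1 - t_2|$ with some fixed $C>0$, and so has at most two solutions (namely the two roots of $x_1 = \pm C(x_1 - t_2)$). Hence $H'$ is monotone on at most three intervals and has at most three zeros. The case $p=2$ is elementary, since then $H$ is piecewise quadratic and its sign can be controlled by inspection.

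I would then conclude by contradiction. Suppose $H$ changes sign at some point $d$. If $d\notin\{a_1,b_1\}$, then $H$ has three distinct zeros $\{a_1,b_1,d\}$, and applying Rolle's theorem to the two subintervals determined by their ordering produces two zeros of $H'$ strictly interior to those subintervals, hence distinct from $a_1$ and $b_1$; combined with $H'(a_1)=H'(b_1)=0$ this gives four zeros of $H'$, contradicting (ii). If instead $d=a_1$ (and the case $d=b_1$ is symmetric), the Taylor expansion $H(x_1) = \tfrac{1}{2}H''(a_1)(x_1-a_1)^2 + o\big((x_1-a_1)^2\big)$ near $a_1$, which is valid since $H\in C^2$ when $p>2$, precludes a sign change unless $H''(a_1)=0$.

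The main obstacle I anticipate is the treatment of the degenerate configuration $H''(a_1)=0$ (or symmetrically at $b_1$), where the quadratic Taylor expansion becomes inconclusive. I would handle this either by a higher-order expansion using the explicit form of $H$ near $a_1$, or by a limiting/perturbation argument from the generic case where the parameters $t_2,t_3,t_4$ are perturbed slightly so that the two roots of $H''=0$ are separated from $a_1$ and $b_1$ and the conclusion of the previous paragraph applies; continuity in the parameters then extends the non-vanishing-sign property to the limit.
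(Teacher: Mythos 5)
Your overall strategy — double zeros of $H$ at $a_1$ and $b_1$, the bound on the number of zeros of $H''$, and Rolle-type counting — is the same as the paper's, but the proof as written has a genuine gap exactly where you flag it: a sign change across $a_1$ (or $b_1$) when $H''(a_1)=0$. Neither of your proposed fixes works as stated. A higher-order Taylor expansion is not generally available ($|x|^p$ is only $C^2$ at the origin when $2<p<3$), and even where $H$ is smooth near $a_1$ such an expansion would not by itself preclude a sign change: if the first nonvanishing derivative at $a_1$ had odd order, $H$ \emph{would} change sign there, so you would still owe a separate argument excluding that. The perturbation argument is also not set up: perturbing $t_2,t_3,t_4$ destroys the relations \eqref{t1}--\eqref{t4} and hence the double-zero structure at $a_1,b_1$ on which your entire zero count rests, and it is not clear which property of the perturbed $H$ you intend to pass to the limit.

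The degenerate case is in fact vacuous, and one more application of Rolle inside the framework you already have shows this — this is precisely the paper's argument. Since $H(a_1)=H(b_1)$, Rolle gives $c\in(a_1,b_1)$ with $H'(c)=0$; applying Rolle to $H'$ on $[a_1,c]$ and $[c,b_1]$ (using $H'(a_1)=H'(c)=H'(b_1)=0$) produces two distinct zeros of $H''$ in the open interval $(a_1,b_1)$. As you computed, $H''$ has at most two zeros unless it vanishes identically; you should also dispose of the identically-zero case ($t_2=0$ and $pt_3(qt_4)^{p-1}=1$), where $H$ is affine with $H(a_1)=H(b_1)$ and $H'(a_1)=0$, hence constant. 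In the nondegenerate case both zeros of $H''$ therefore lie strictly inside $(a_1,b_1)$, so $H''(a_1)\neq 0$ and $H''(b_1)\neq 0$, and your second-order Taylor argument applies at both points. With this observation your proof closes and becomes essentially the paper's argument recast as a proof by contradiction.
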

\begin{proof}
We have chosen the~$t_i$ in such a way that~$H(a_1)=H(b_1)$. 
Moreover,~$H'(a_1)=H'(b_1)=0$ 
since~$\nabla \Phi (a_1,a_2) = \nabla \Phi(b_1,b_2) = 0$. Therefore,~$H''$ 
has at least two distinct roots on~$(a_1,b_1)$. Note that the function
\begin{equation*}
H''(x_1)=(p-1)\left(pt_3|x_1|^{p-2}-\frac{|x_1-t_2|^{p-2}}{(qt_4)^{p-1}}\right)
\end{equation*}
is either equal to zero, or has no more than two roots. If~$H''$ is 
identically zero, then~$H$ vanishes on~$\mathbb{R}$ as well. In the 
other case,~$H$ has unique local extremum on~$(a_1,b_1)$. In this 
case,~$H''$ has exactly two roots. Therefore, ~$H''(a_1)$ and~$H''(b_1)$ 
have the same sign and~$H$ does not change sign on~$\mathbb{R}$.
\end{proof}

\begin{lem}\label{PsiLemma}
Let~$a$ and~$b$ be such that~$t_3,\,t_4>0$. If~$R_1=R_2$, then~$H$ is 
identically zero. If~$R_1\ne R_2$, then $H\geq 0$ is equivalent to
$\psi(R_2)>\psi(R_1)$, where~$\psi(R)=\frac{|1+\phi(R)|}{|1+R|^{p-1}}$.
\end{lem}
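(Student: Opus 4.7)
The plan is to handle the two assertions separately, leaning on the analysis already established in Lemmas \ref{lem21}--\ref{Hsign} and on the key Hölder duality identity $(p-1)(q-1)=1$.

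For the case $R_1 = R_2 =: R$, I would directly substitute into formulas \eqref{t12} and \eqref{t34}. The numerators of $t_1$ and $t_2$ vanish, giving $t_1 = t_2 = 0$. For $t_3$ and $t_4$, the factor $(1+\phi(R_2))/(1+\phi(R_1))$ and the factor $(1+R_1)/(1+R_2)$ both equal $1$, so
$$pt_3(qt_4)^{p-1} = \frac{a_2}{a_1|a_1|^{p-2}}\cdot\frac{a_1^{p-1}}{(a_2|a_2|^{q-2})^{p-1}} = 1,$$
where the final simplification uses $a_1a_2>0$ and $(p-1)(q-1)=1$. Consequently the $|x_1|^p$ and $|x_1-t_2|^p=|x_1|^p$ terms in \eqref{Hfunction} cancel exactly, and $H(x_1)\equiv t_0$. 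Since $H(a_1)=0$ by construction, $t_0=0$ and $H\equiv 0$.

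For the case $R_1 \neq R_2$, the main observation is that by Lemma \ref{Hsign}, $H$ does not change sign on $\mathbb{R}$. Hence $H\ge0$ on $\mathbb{R}$ is equivalent to $H$ being nonnegative at some (any) convenient test point. Taking $|x_1|\to\infty$ and noting that $|x_1|^p$ and $|x_1-t_2|^p$ both behave like $|x_1|^p$ up to lower-order corrections, I would show
$$H(x_1) = \Bigl(t_3 - \frac{1}{p(qt_4)^{p-1}}\Bigr)|x_1|^p + \Ordo\bigl(|x_1|^{p-1}\bigr),$$
so that $H\ge 0 \iff pt_3(qt_4)^{p-1} \ge 1$. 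Plugging \eqref{t34} into this product and again using $(p-1)(q-1)=1$ to cancel the $a_1$-- and $a_2$--factors, the result is
$$pt_3(qt_4)^{p-1} = \frac{1+\phi(R_2)}{1+\phi(R_1)}\Bigl(\frac{1+R_1}{1+R_2}\Bigr)^{p-1}.$$
The positivity hypotheses $t_3,t_4>0$ combined with \eqref{t34} force both ratios $(1+\phi(R_2))/(1+\phi(R_1))$ and $(1+R_1)/(1+R_2)$ to be positive, which allows me to insert absolute values and identify the right-hand side with $\psi(R_2)/\psi(R_1)$. Therefore $H\ge 0 \iff \psi(R_2) \ge \psi(R_1)$, which is the claim (strict inequality corresponding to the non-degenerate case $H\not\equiv 0$).

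The main obstacle is the sign bookkeeping: the expressions $1+R$ and $1+\phi(R)$ can vanish or change sign at $R=-1$, so one must verify that the constraints $t_3,t_4>0$ together with $a_1a_2>0$ really do force all the relevant ratios to be positive and allow the passage to the absolute-value formulation of $\psi$. A secondary subtlety is the edge case in which the leading $|x_1|^p$ coefficient is zero while $R_1\ne R_2$; here the asymptotic comparison reduces to one order lower, but the same sign analysis of \eqref{Hfunction} combined with Lemma \ref{Hsign} yields the borderline case $\psi(R_1)=\psi(R_2)$.
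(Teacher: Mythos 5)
Your strategy is the paper's own: reduce everything to the sign of $pt_3(qt_4)^{p-1}-1$ via the asymptotics of $H$ at infinity together with Lemma~\ref{Hsign}, and identify $pt_3(qt_4)^{p-1}$ with $\psi(R_2)/\psi(R_1)$ from~\eqref{t34}. The case $R_1=R_2$ is handled exactly as in the paper and is fine.

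There is, however, a genuine gap in the case $R_1\ne R_2$: you conclude $H\ge 0\iff \psi(R_2)\ge\psi(R_1)$, whereas the lemma asserts the \emph{strict} inequality, and the discrepancy is not cosmetic. Your claimed equivalence $H\ge 0\iff pt_3(qt_4)^{p-1}\ge 1$ fails at the borderline. Indeed, suppose $pt_3(qt_4)^{p-1}=1$ while $R_1\ne R_2$. Then the coefficient of $|x_1|^p$ in $H$ vanishes, but by~\eqref{t12} one has $t_2=a_1\frac{R_2-R_1}{1+R_2}\ne 0$, and expanding one order further,
\begin{equation*}
-\frac1p\,\frac{|x_1-t_2|^p}{(qt_4)^{p-1}}+t_3|x_1|^p
=\frac{t_2}{(qt_4)^{p-1}}\,\sign(x_1)\,|x_1|^{p-1}+\Ordo\big(|x_1|^{p-2}\big).
\end{equation*}
Since $p-1>1$, this term dominates $t_1x_1$, and it has opposite signs at $+\infty$ and at $-\infty$; hence $H$ tends to $-\infty$ at one end, so $H\ge 0$ fails (equivalently, this configuration changes sign and thus contradicts Lemma~\ref{Hsign}, so it cannot occur with $t_3,t_4>0$). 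This is precisely the step that produces the strict inequality in the statement. You flag the issue as a ``secondary subtlety'' and say the lower-order analysis ``yields the borderline case $\psi(R_1)=\psi(R_2)$,'' but that sentence does not decide whether $H\ge 0$ holds there; the correct resolution, which you must supply, is that it does not. With that paragraph added (and the routine sign bookkeeping you already identified), your argument coincides with the paper's proof.
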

\begin{proof}
Identity~\eqref{t34} leads to
\begin{equation*}
pt_3(qt_4)^{p-1}=\frac{\psi(R_2)}{\psi(R_1)}.
\end{equation*}

If~$R_1=R_2$, then~$t_1=t_2=0$ and~$pt_3(qt_4)^{p-1}=1$. Thus,~$H$ is 
identically equal to zero in this case. 

If~$R_1\ne R_2$, then~$t_2 \ne 0$. In the case~$pt_3(qt_4)^{p-1}=1$, the 
signs of~$H$ at~$-\infty$ and~$+\infty$ differ, which contradicts 
Lemma~\ref{Hsign}. If~$pt_3(qt_4)^{p-1}>1$, 
then~$H$ is positive at the infinities, if~$pt_3(qt_4)^{p-1}<1$ it is negative.
\end{proof}
The first assertion of Theorem~\ref{BellmanTheorem} is almost proved. 
Indeed, consider any segment~$\ell_c(a_1,a_2,R)$ with~$a_1a_2 > 0$. 
Define~$R_1$ and~$R_2$ by formulas~\eqref{Rdef}, i.e.~$R_1 = \rho(R)$ 
and~$R_2 = R$, and note that~$R_1 \geq R_2$. 
Then,~$\lambda(R_1) = \lambda(R_2)$ (by Definition~\ref{Rho}) 
and~$\psi(R_2)>\psi(R_1)$ since~$\psi$ decreases on~$(-1,1)$. Thus, 
by Lemmas~\ref{Eqcup3Lem} and~\ref{PsiLemma}, the function~$\Psi$ 
defined by~\eqref{Psifunction},~\eqref{t12}, and~\eqref{t34} satisfies 
the requirements of Lemma~\ref{lem22}. This means~$\BellcPlus$ is 
linear on~$\ell_c(a_1,a_2,R)$. It remains to combine 
formulas~\eqref{corabpoints} and~\eqref{Linearity} to see 
that~$\BellcPlus(x) = x_3^{\frac{1}{p}}x_4^{\frac{1}{q}}$ 
if~$x \in \ell_c(a_1,a_2,R)$ and~$R \geq R_0$.

\subsection{Second assertion of Theorem~\ref{BellmanTheorem}}
\label{SecondAssertion}
\begin{defn}\label{TDef}
Consider the mapping
\begin{equation*}
T\colon\Omegac\setminus \Big(\{x\mid x_1=x_3=0\}
\cup\{x \mid x_2=x_4=0\}\Big)\to [-1,1]^2
\end{equation*}
given by the rule 
\begin{equation}\label{MappingT}
T\colon (x_1,x_2,x_3,x_4)\mapsto (x_1x_3^{-\frac1p},x_2x_4^{-\frac1q}).
\end{equation}
\end{defn}
The skeleton~$\FixedBoundary\Omegac$ is mapped onto the vertices~$(\pm1,\pm1)$.
Due to the homogeneity of~$\BellcPlus$ (Lemma~\ref{SimpleProperties}), 
it suffices to compute the 
values of~$\BellcPlus$ for~$x$ such that~$x_3=x_4=1$; the values at all 
other points may be restored from them:
\begin{equation*}
\BellcPlus(x)=x_3^{\frac1p}x_4^{\frac1q}
\BellcPlus(x_1x_3^{-\frac1p},x_2x_4^{-\frac1q},1,1)\,. 
\end{equation*}
Therefore, to prove the second assertion of Theorem~\ref{BellmanTheorem}, 
it suffices to show that the union of~$T(\ell_c(a_1,a_2,R))$ over 
all~$a_1$,~$a_2$, and~$R$ such that~$a_1>0$,~$a_2 > 0$, and~$R\in (-1,1)$, 
covers the triangle
\begin{equation}\label{Triangle}
\{(y_1,y_2)\mid -1<y_2<y_1<1\}.
\end{equation}
Note that~$T(\ell_c(a_1,a_2,1)) = \{(y_1,y_1)\mid y_1 \in [-1,1]\}$.

\begin{lem}\label{lemeta}
The function~$\eta$ defined by the formula  
\begin{equation*}
\eta\colon (\tau,R) \mapsto \Big(\eta\cii{1}(\tau,R),\eta\cii{2}(\tau,R)\Big)
\df\left(\frac{-\tau+(1-\tau)\rho(R)}{(\tau+(1-\tau)\rho(R)^p)^{\frac{1}{p}}},
\frac{-\tau+(1-\tau)\phi(R)}{(\tau+(1-\tau)|R|^p)^{\frac{1}{q}}}\right)
\end{equation*}
maps~$(0,1)\times(-1,1)$ onto the triangle~\eqref{Triangle} bijectively.
\end{lem}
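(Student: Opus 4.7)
The plan is to invert $\eta$ sequentially. First, for fixed $R\in(-1,1)$, I would show that $\tau\mapsto\eta_1(\tau,R)$ is a strictly decreasing bijection of $(0,1)$ onto $(-1,1)$, giving an implicit function $\tau^\star(R;y_1)$ solving $\eta_1(\tau,R)=y_1$. Then for fixed $y_1\in(-1,1)$, I would show that the composite $g(R):=\eta_2(\tau^\star(R;y_1),R)$ is a strictly monotone bijection of $(-1,1)$ onto $(-1,y_1)$. This yields existence and uniqueness of $(\tau,R)$ with $\eta(\tau,R)=(y_1,y_2)$ for every $(y_1,y_2)$ in the open triangle.

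Before the inversion step, I would verify that $\eta$ lands in the closed triangle. The bounds $|\eta_i|\le 1$ follow from Jensen's inequality on the two-point distribution putting mass $\tau$ at $-1$ and mass $1-\tau$ at $\rho(R)$ (for $\eta_1$, with convexity of $|\cdot|^p$) or at $\varphi(R)$ (for $\eta_2$, with convexity of $|\cdot|^q$ and the identity $|\varphi(R)|^q=|R|^p$). Strictness on the open square follows from $\rho(R)<1$ and $|\varphi(R)|<1$ for $R\in(-1,1)$. For $\eta_2<\eta_1$, direct computation gives $\eta_1=\eta_2=1-2\tau$ at $R=1$ (since $\rho(1)=\varphi(1)=1$); for $R<1$ I would differentiate $\eta_1-\eta_2$ in $R$ and use $\rho(R)>\varphi(R)$ together with monotonicity in $s$ of
\[
F^{(p)}(s,\tau):=\frac{(1-\tau)s-\tau}{(\tau+(1-\tau)|s|^p)^{1/p}},
\]
obtained by a direct calculation.

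For the inversion itself, strict monotonicity of $\tau\mapsto\eta_1(\tau,R)$ follows from a direct sign analysis of the $\tau$-derivative, using that $\rho(R)\in[R_0,1]\subset(0,1]$. The boundary values of $g$ are confirmed explicitly: at $R=-1$, the identities $\rho(-1)=1$ and $\varphi(-1)=-1$ force $\eta_2(\cdot,-1)\equiv-1$, so $g(-1)=-1$; at $R=1$, $\eta_1=\eta_2$ yields $g(1)=y_1$. Granted the strict monotonicity of $g$, continuity and the intermediate value theorem deliver the bijection onto $(-1,y_1)$.

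The main obstacle is the strict monotonicity of $g$, together with the $R$-differentiation argument securing $\eta_2<\eta_1$. Since $\rho$ is piecewise-defined — equal to $R$ on $[R_0,1]$, and determined by $\lambda(\rho(R))=\lambda(R)$ on $[-1,R_0]$ with a likely corner at $R_0$ where $\lambda'(R_0)=0$ — implicit differentiation of the equation defining $\tau^\star$ must be carried out on each branch separately, with the one-sided derivatives at $R_0$ checked for consistency. The identity $\lambda(\rho(R))=\lambda(R)$ and the relation $|\varphi(R)|^q=|R|^p$ should collapse the sign of $g'(R)$ to a manageable algebraic inequality, which I expect to verify by a power-mean or convexity estimate.
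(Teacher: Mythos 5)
Your architecture is the same as the paper's: invert $\eta_1$ in $\tau$ for fixed $R$ (using $\partial\eta_1/\partial\tau<0$ and the boundary values $\eta_1(0,R)=1$, $\eta_1(1,R)=-1$), then study $g(R)=\eta_2(\tau^\star(R;y_1),R)$ with $g(-1)=-1$ and $g(1)=y_1$, getting surjectivity from the intermediate value theorem and injectivity from strict monotonicity of $g$. The gap is that the strict monotonicity of $g$ --- which you correctly identify as the main obstacle --- is not proved, only conjectured to ``collapse to a manageable algebraic inequality'' amenable to a power-mean estimate. This step is essentially the entire content of the paper's proof. There one writes $Y_2'=(\partial_\tau\eta_1)^{-1}\Delta$ with $\Delta=\partial_R\eta_2\,\partial_\tau\eta_1-\partial_\tau\eta_2\,\partial_R\eta_1$, and the sign of $\Delta$ requires: (i) on $[R_0,1]$, where $\rho=R$, an exact factorization reducing the sign to that of $(1+R^{p-1})^2-(p-1)^2R^{p-2}(1+R)^2$, i.e.\ to $-\lambda'(R)$; (ii) on $(-1,R_0)$, the observation that the braced expression is linear in $\tau$, a direct check at $\tau=1$, and at $\tau=0$ a long chain of substitutions through $\lambda(\rho)=\lambda(R)$, the formula $\rho'=\lambda'(R)/\lambda'(\rho)$, and the re-expression of $\lambda'$ in terms of $\lambda$, ending with a perfect square plus a term whose sign is controlled by $\lambda<0$ and $R<\rho$. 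None of this computation, nor any viable substitute for it, appears in your proposal, so what you have is a correct plan with its decisive step missing.

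A secondary point: your preliminary verification that the image lies in the triangle is both unnecessary and, as sketched, unsound. It is unnecessary because once $g$ is strictly increasing with $g(1)=y_1$, the inequality $\eta_2<\eta_1$ on the open square is automatic (this is exactly how the paper obtains it). It is unsound because $\eta_1$ and $\eta_2$ are not values of the same one-parameter family: $\eta_1=F^{(p)}(\rho(R),\tau)$ has exponent $1/p$ in the denominator while $\eta_2=F^{(q)}(\varphi(R),\tau)$ has exponent $1/q$, so monotonicity in $s$ of $F^{(p)}$ alone does not compare them; and the power-mean inequality between the two denominators points the wrong way precisely when the common numerator $(1-\tau)\rho-\tau$ is positive. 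Differentiating $\eta_1-\eta_2$ in $R$ does not rescue this on $[R_0,1]$, where both $\partial_R\eta_1$ and $\partial_R\eta_2$ are positive and one is thrown back onto the same quantitative comparison that the $\Delta$ computation resolves.
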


\begin{rem}
For~$R$ fixed, the image of the mapping~$\tau \mapsto \eta(\tau,R)$ 
coincides with~$T(\ell_c(a_1,a_2,R))$, here~$a_1 > 0$ and~$a_2 > 0$.
\end{rem}
\begin{proof}
First, we show that for any~$(y_1,y_2) \in(-1,1)^2$ such that~$y_2 < y_1$, 
there exist~$R\in(-1,1)$ and~$\tau \in (0,1)$ such 
that~$y_i=\eta\cii{i}(\tau,R)$,~$i=1,2$.

Note that~$\frac{\partial \eta\cii{1}}{\partial \tau} (\tau,R)<0$:
\begin{equation}\label{eta1tau}
\begin{split}
\pd{\eta_1}{\tau}&=\Big[\frac{1}{p}\big(\tau-(1-\tau)\rho\big)(1-\rho^p)-(1+\rho)\big(\tau+(1-\tau)\rho^p\big)\Big]
\big(\tau+(1-\tau)\rho^p\big)^{-1-\frac{1}{p}}=\\
&=-\Big[\Big(\frac{1}{q}+\rho+\frac{1}{p}\rho^p\Big)\tau
+\Big(\frac{1}{p}(1-\rho^p)\rho+(1+\rho)\rho^p\Big)(1-\tau)\Big]
\big(\tau+(1-\tau)\rho^p\big)^{-1-\frac{1}{p}}<0.
\end{split}
\end{equation}

Moreover,~$\eta\cii1(0,R)=1$,~$\eta\cii1(1,R)=-1$. Therefore, 
for any~$y_1 \in(-1,1)$ 
and~$R \in [-1,1]$, there exists a unique value~$\tau \in (0,1)$ such 
that~$\eta\cii1(\tau,R)=y_1$. Let us denote this value by the 
symbol~$\tau_1(R,y_1)$. The function~$\tau_1(\cdot,y_1)$ is continuous 
for any fixed~$y_1\in(-1,1)$. What is more,~$\eta(\tau,1) = 
(1-2\tau,1-2\tau)$ and~$\eta(\tau,-1)=(1-2\tau,-1)$. 
Thus,~$\tau_1(-1,y_1)=\tau_1(1,y_1)=\frac{1-y_1}{2}$.

Fix~$y_1$ for a while. Note that the function
\begin{equation*}
Y_2\colon R \mapsto \eta\cii2(\tau_1(R,y_1),R)
\end{equation*}
is continuous and 
\begin{equation*}
Y_2(-1)=\eta\cii2\Big(\frac{1-y_1}{2},-1\Big)=-1< y_2 < y_1
=\eta\cii2\Big(\frac{1-y_1}{2},1\Big) = Y_2(1).
\end{equation*}
Thus, for some~$R\in(-1,1)$ we have~$Y_2(R)=y_2$. This means that the 
identity~$\eta(\tau,R)=(y_1,y_2)$ holds for~$\tau = \tau_1(R,y_1)$ and 
this specific choice of~$R$. Thus, we have proved that the union 
of~$T(\ell_c(a_1,a_2,R))$ covers the triangle~\eqref{Triangle}.

Let us now show that the function~$Y_2$ increases provided~$y_1$ is 
fixed. If this is true, then for any~$y_1\in(-1,1)$ and~$y_2\in(-1,y_1)$, 
there exists unique value~$R$ such that~$Y_2(R)=y_2$. Moreover, the 
same monotonicity leads to the inequality~$\eta_2 \leq \eta_1$. 
Thus,~$\eta$ is bijective.

So it remains to prove the mentioned monotonicity. We compute the derivative:
\begin{equation*}
Y_2'(R)= \Big(\pd{\eta_1}{\tau}\Big)^{-1}\!\!\!\cdot \Delta(R), 
\qquad \Delta(R)\df \pd{\eta_2}{R}\pd{\eta_1}{\tau}-
\pd{\eta_2}{\tau}\pd{\eta_1}{R}.
\end{equation*}
We have already proved~$\pd{\eta_1}{\tau}<0$ (see~\eqref{eta1tau}).
 
We investigate~$\Delta$. We compute the remaining partial derivatives:
\begin{equation}\label{eta1R}
\pd{\eta_1}{R}=\tau(1-\tau)\rho'\big(1+\rho^{p-1}\big)
\big(\tau+(1-\tau)\rho^p\big)^{-1-\frac{1}{p}},
\end{equation}
\begin{equation}\label{eta2tau}
\pd{\eta_2}{\tau}=\Big[\frac{1}{q}
\big(\tau-(1-\tau)\phi\big)(1-|R|^p)-(1+\phi)\big(\tau+(1-\tau)|R|^p\big)\Big]
\big(\tau+(1-\tau)|R|^p\big)^{-1-\frac{1}{q}},
\end{equation}
\begin{equation}\label{eta2R}
\pd{\eta_2}{R}=(p-1)\tau(1-\tau)(1+R)|R|^{p-2}
\big(\tau+(1-\tau)|R|^p\big)^{-1-\frac{1}{q}}.
\end{equation}

We plug these expressions into the formula for~$\Delta$:
\begin{equation}\label{Delta1}
\begin{split}
\Delta \cdot&\big(\tau+(1-\tau)|R|^p\big)^{1+\frac{1}{q}}
\big(\tau+(1-\tau)\rho^p\big)^{1+\frac{1}{p}}=\\
&=\tau(1-\tau)\Big\{
(p-1)(1+R)|R|^{p-2}\Big[\frac{1}{p}
\big(\tau-(1-\tau)\rho\big)(1-\rho^p)-
(1+\rho)\big(\tau+(1-\tau)\rho^p\big)\Big]-
\\
&\hskip 63pt-\rho'(1+\rho^{p-1})\Big[\frac{1}{q}
\big(\tau-(1-\tau)\phi\big)(1-|R|^p)-(1+\phi)\big(\tau+(1-\tau)|R|^p\big)\Big]	
\Big\}.
\end{split}
\end{equation}
When~$R \in[R_0,1]$, we have~$\rho=R$. Thus, the expression in the 
formula~\eqref{Delta1} is
\begin{equation}\label{Delta2}
\begin{split}
\tau(1-\tau)\Big\{
(p-1)(1+R)R^{p-2}\Big[\frac{1}{p}(\tau-(1-\tau)R)
\big(1-R^p\big)-(1+R)\big(\tau+(1-\tau)R^p\big)\Big]&-\\
	-(1+R^{p-1})\Big[\frac{1}{q}
\big(\tau-(1-\tau)R^{p-1}\big)\big(1-R^p\big)
-\big(1+R^{p-1}\big)\big(\tau+(1-\tau)R^p\big)\Big]&\Big\}=\\
=\frac{1}{p}\tau(1-\tau)\big(\tau+(1-\tau)R^p\big)
\Big[\big(1+R^{p-1}\big)^2-(p-1)^2R^{p-2}(1+R)^2\Big]&<0
\end{split}
\end{equation}
by Lemma~\ref{RealR0}, since the latter expressions in the brackets 
has the sign opposite to~$\lambda'(R)$, see~\eqref{eqlambdaprime}.
Thus, we have proved that~$Y_2$ increases on~$(R_0,1)$ for any fixed~$y_1$.

It remains to consider the case~$R \in (-1,R_0)$. The expression in 
the braces in~\eqref{Delta1} is a linear function of~$\tau$. Thus, 
it suffices to investigate its signs at the endpoints~$\tau=0$ and~$\tau=1$. 

When~$\tau=1$, the expression in the first brackets in~\eqref{Delta1} 
is negative: 
\begin{equation*}
\frac{1-\rho^p}{p}-(1+\rho)=-\frac{1}{q}-\frac{\rho^p}{p}-\rho<0.
\end{equation*} 
The expression in the second brackets in~\eqref{Delta1} is equal to
\begin{equation*}
\frac{1-|R|^p}{q}-(1+\phi)=
-\Big(\frac{1}{p}+\frac{|R|^p}{q}+R|R|^{p-2}\Big)\leq 0.
\end{equation*}
Finally,~$\rho'<0$, therefore, the expression in the braces 
in~\eqref{Delta1} is negative. 

It remains to study the case~$\tau=0$. In this case, our 
expression is equal to
\begin{equation}\label{eq13}
\Upsilon=-(p-1)(1+R)|R|^{p-2}\Big[\frac{\rho}{p}
\big(1-\rho^p\big)+(1+\rho)\rho^p\Big]
+\rho'(1+\rho^{p-1})\Big[\frac{\phi}{q}\big(1-|R|^p\big)+(1+\phi)|R|^p\Big].
\end{equation}

Now we will compute the expressions in the brackets in~\eqref{eq13} 
and the derivative~$\rho'$ separately.
Let us first rewrite the identity~\eqref{lambdadef} in a more convenient form:
\begin{equation}\label{eq14}
\frac{p-1}{1+\phi}=\frac{1-(1+R)\lambda}{1+R}, \qquad 
\phi=\frac{(1+R)(p-1+\lambda)-1}{1-(1+R)\lambda}.
\end{equation}
Similarly, we rewrite the identity~$\lambda(\rho)=\lambda$ as
\begin{equation}\label{eq15}
\frac{p-1}{1+\rho^{p-1}}=\frac{1-(1+\rho)\lambda}{1+\rho}, \qquad 
\rho^{p-1}=\frac{(1+\rho)(p-1+\lambda)-1}{1-(1+\rho)\lambda}.
\end{equation}

We also re-express~$\lambda'(R)$:
\begin{multline}\label{eqlambda'}
\lambda'(R)=\frac{(p-1)^2}{(1+\phi)^2}|R|^{p-2}-\frac{1}{(1+R)^2}=
\Big(\frac{1-(1+R)\lambda}{1+R}\Big)^2\frac{\phi}{R}-\frac{1}{(1+R)^2}=\\
=\frac{\big(1-(1+R)\lambda\big)
\big((1+R)(p-1+\lambda)-1\big)}{R(1+R)^2}-\frac{1}{(1+R)^2}=\\
=\frac{1}{R(1+R)^2}\Big((1+R)(p-1+\lambda)
\big(1-(1+R)\lambda\big)-1+(1+R)\lambda-R\Big)=\\
=\frac{(p-1+\lambda)\big(1-(1+R)\lambda\big)-1+\lambda}{R(1+R)}.
\end{multline}

The identity~$\lambda(R)=\lambda(\rho(R))$ leads to
\begin{equation}\label{rho'}
\rho' = \frac{\lambda'(R)}{\lambda'(\rho)}
=\frac{\rho(1+\rho)}{R(1+R)}\cdot \frac{(p-1+\lambda)
\big(1-(1+R)\lambda\big)-1+\lambda}{(p-1+\lambda)
\big(1-(1+\rho)\lambda\big)-1+\lambda}.
\end{equation}

We rewrite the expression in the first brackets of~\eqref{eq13}:
\begin{multline}\label{eq13sq1}
\frac{\rho}{p}\big(1-\rho^p\big)+(1+\rho)\rho^p = 
\frac{\rho+\rho^p}{p}+\frac{\rho^{p}+\rho^{p+1}}{q}
=\frac{\rho}{q}\Big(\frac{1+(1+\rho)(p-1)}{p-1}(1+\rho^{p-1})-(1+\rho)\Big)=
\\
=\frac{\rho}{q}\Big(
\frac{1+(1+\rho)(p-1)}{1-(1+\rho)\lambda}(1+\rho)-(1+\rho)\Big)=
\frac{\rho(1+\rho)}{q\big(1-(1+\rho)\lambda\big)}
\Big(1+(1+\rho)(p-1)-1+(1+\rho)\lambda\Big)=\\
=\frac{\rho(1+\rho)^2(p-1+\lambda)}{q\big(1-(1+\rho)\lambda\big)}.
\end{multline}

And we also rewrite the expression in the second brackets of~\eqref{eq13}:
\begin{multline}\label{eq13sq2}
\frac{\phi}{q}(1-|R|^p)+(1+\phi)|R|^p=
\frac{\phi+|R|^p}{q}+\frac{|R|^p(1+\phi)}{p}=
\frac{\phi(1+R)}{q}+\frac{\phi R(1+\phi)}{p}=\\
=\frac{\phi(1+R)}{q}+\frac{\phi R(p-1)(1+R)}{p\big(1-(1+R)\lambda\big)}
=\frac{\phi(1+R)}{q}\Big(1+\frac{R}{1-(1+R)\lambda}\Big)=
\frac{\phi(1+R)^2(1-\lambda)}{q\big(1-(1+R)\lambda\big)}.
\end{multline}

Combining~\eqref{rho'},~\eqref{eq13sq1}, and~\eqref{eq13sq2}, 
we re-express~\eqref{eq13} as
\begin{multline}\label{eq16}
\Upsilon=-\frac{(p-1)(1+R)|R|^{p-2}
\rho(1+\rho)^2(p-1+\lambda)}{q\big(1-(1+\rho)\lambda\big)}+
\\
+\frac{\rho(1+\rho)}{R(1+R)}\cdot \frac{(p-1+\lambda)
\big(1-(1+R)\lambda\big)-1+\lambda}{(p-1+\lambda)
\big(1-(1+\rho)\lambda\big)-1+\lambda}\cdot
\frac{(p-1)(1+\rho)}{\big(1-(1+\rho)\lambda\big)}\cdot
\frac{\phi(1+R)^2(1-\lambda)}{q\big(1-(1+R)\lambda\big)}=
\\
=\frac{(p-1)(1+R)|R|^{p-2}\rho(1+\rho)^2}{q\big(1-(1+\rho)\lambda\big)}\times
\\ \left\{-(p-1+\lambda)+
\frac{(p-1+\lambda)\big(1-(1+R)\lambda\big)-1+\lambda}{(p-1+\lambda)
\big(1-(1+\rho)\lambda\big)-1+\lambda}\cdot
\frac{1-\lambda}{1-(1+R)\lambda}\right\}.
\end{multline}
Note that~$\lambda$ attains its maximal value at the endpoints of~$[-1,1]$. 
It equals~$-\frac{p-2}{2}<0$ there, thus,~$\lambda<0$. Consequently, 
the quantities~$1-(1+R)\lambda$ and~$1-(1+\rho)\lambda$ are non-negative. 
Thus, the first multiple in~\eqref{eq16} is non-negative. 

Consider the expression inside the braces in~\eqref{eq16} now. Note 
that both denominators are positive. We have just discussed the second, 
as for the first, its sign coincides with the sign of~$\lambda'(\rho)$ 
(see~\eqref{eqlambda'}), which is positive by Lemma~\ref{RealR0}. 
Multiplying the expression in the braces by the denominators, we get
\begin{multline*}
\sign(\Upsilon)=\sign
\left\{\Big((p-1+\lambda)\big(1-(1+R)\lambda\big)-1+\lambda\Big)
(1-\lambda)\right.-
\\
\left. -(p-1+\lambda)\Big((p-1+\lambda)
\big(1-(1+\rho)\lambda\big)-1+\lambda\Big)\big(1-(1+R)\lambda\big) \right\}=
\\
-\sign\Big\{(1-\lambda)^2-2(1-\lambda)(p-1+\lambda)
\big(1-(1+R)\lambda\big)+
\\ 
(p-1+\lambda)^2\big(1-(1+R)\lambda\big)\big(1-(1+\rho)\lambda\big)\Big\}=
\\
-\sign\bigg\{\Big((1-\lambda)-(p-1+\lambda)\big(1-(1+R)\lambda\big)\Big)^2+
\\
(p-1+\lambda)^2\big(1-(1+R)\lambda\big)
\Big[1-(1+\rho)\lambda-1+(1+R)\lambda\Big]\bigg\}=
\\
-\sign\left\{\Big((1-\lambda)-(p-1+\lambda)
\big(1-(1+R)\lambda\big)\Big)^2+(p-1+\lambda)^2
\big(1-(1+R)\lambda\big)(R-\rho)\lambda\right\}=-1,
\end{multline*}
since~$\lambda<0$ and~$R<\rho$.

We proved that~$\Upsilon<0$ for~$R\in(-1,R_0)$. Therefore,~$\Delta<0$, 
and~$Y_2'>0$. 
\end{proof}

\subsection{The structure of the foliation}\label{s34}
\begin{prop}\label{propfol}
The interior of~$\Omegac$ is foliated by one and three dimensional 
extremals of~$\BellcPlus$. 
The three dimensional domains are parametrized by~$t_3>0$, each such 
domain is the convex hull of the curve~$\gamma_{t_3}$\textup: 
\begin{equation*}
\gamma_{t_3} \colon x_1\mapsto (x_1,pt_3 \phi(x_1), |x_1|^p, 
|x_1|^{p}(pt_3)^q), \quad x_1 \in \mathbb{R}.
\end{equation*}
The~$T$-image of each such linearity domain is  the subdomain 
of~$[-1,1]^2$ bounded by the curve~$\eta_-=\eta(\cdot,R_0)$ and 
its symmetric with respect to~$(0,0)$ image~$\eta_+$ 
\textup(see Figure~\textup{\ref{figtrivdom})}.

The remaining part of the domain is covered by one dimensional extremals. 
The~$T$-image of each such segment is either the 
curve~$\eta(\cdot,R)$,~$R\in(-1,R_0)$, or its symmetric with 
respect to~$(0,0)$ image. 
\end{prop}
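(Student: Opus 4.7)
The plan is to identify the three-dimensional linearity domains as the convex hulls $\conv(\gamma_{t_3})$ for $t_3 > 0$ via a Young-type affine majorization, and to argue that the rest of $\interior\Omegac$ is foliated by the segments $\ell_c(a_1,a_2,R)$ of Theorem~\ref{BellmanTheorem} with $R \in (-1,R_0)$.

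First, for each $t_3 > 0$ I would pick $t_4 = (pt_3)^{1-q}/q$ so that $pt_3(qt_4)^{p-1} = 1$, and define the affine function $\Psi_{t_3}(x) = t_3 x_3 + t_4 x_4$. By the pointwise minimality of $\BellcPlus$ among concave majorants (last item of Lemma~\ref{SimpleProperties}), the inequality $\Psi_{t_3} \ge \BellcPlus$ on $\Omegac$ reduces to the bound $t_3|x_1|^p + t_4|x_2|^q \ge x_1 x_2$ on the skeleton, which is the weighted Young inequality in the sharp form (the condition $pt_3(qt_4)^{p-1}=1$ being exactly the saturation condition). Its equality case $pt_3|x_1|^p = qt_4|x_2|^q$ together with $x_1 x_2 \ge 0$ simplifies to $x_2 = pt_3\phi(x_1)$, i.e., to the curve $\gamma_{t_3}$. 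Since $\BellcPlus$ is concave and $\Psi_{t_3}$ is affine, the contact set $\{\BellcPlus = \Psi_{t_3}\}$ is convex and therefore equals $\conv(\gamma_{t_3})$. Varying $t_3 > 0$ produces the claimed one-parameter family of three-dimensional linearity domains.

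Next, I would reconcile these with Theorem~\ref{BellmanTheorem}: a segment $\ell_c(a_1,a_2,R)$ with $a_1,a_2 > 0$ is a chord of $\gamma_{t_3}$ precisely when $a_2 = pt_3\phi(a_1)$ and $\rho(R) = R$, i.e., $R \in [R_0,1]$. These segments get absorbed into the three-dimensional domains, while those with $R \in (-1,R_0)$ persist as genuinely one-dimensional extremals, since $\rho(R) \ne R$ there prevents both endpoints from lying on any common $\gamma_{t_3}$. For the $T$-image, the scale invariance of $T$ makes $T(\conv(\gamma_{t_3}))$ independent of $t_3$; parametrizing chords of $\gamma_{t_3}$ by the ratio $R = -b_1/a_1$ and applying $T$ recovers precisely the curves $\eta(\cdot,R)$ of Lemma~\ref{lemeta}, and as $R$ sweeps $[R_0,1]$ these fill the lens bounded by $\eta_- = \eta(\cdot,R_0)$ and its centrally symmetric image $\eta_+$. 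The complementary region of $(-1,1)^2$ is then swept by $\eta(\cdot,R)$ with $R \in (-1,R_0)$, matching the one-dimensional extremals; the case $a_1 a_2 < 0$ is obtained by the symmetry recorded in Corollary~\ref{DescriptionOfConvexHull}.

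The main obstacle will be showing that the contact set of $\Psi_{t_3}$ is not strictly larger than $\conv(\gamma_{t_3})$. The Young equality case pins down the skeleton contact to $\gamma_{t_3}$, but one still has to exclude equality at interior points of $\Omegac$ lying outside this convex hull. This is accomplished by expressing each such point, via Carath\'eodory and the minimality characterization of $\BellcPlus$, as a convex combination of skeleton contact points of $\Psi_{t_3}$, which must therefore all lie on $\gamma_{t_3}$. A secondary point to verify is that the three-dimensional domains together with the one-dimensional extremals exhaust $\interior\Omegac$, which follows from combining the covering statement of Theorem~\ref{BellmanTheorem} with the dichotomy $R \in [R_0,1]$ versus $R \in (-1,R_0)$.
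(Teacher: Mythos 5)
Your proposal follows essentially the same route as the paper: the affine majorant $t_3x_3+t_4x_4$ with the saturation condition $pt_3(qt_4)^{p-1}=1$, Young's inequality on the skeleton with equality set $\gamma_{t_3}$, the dichotomy $\rho(R)=R\Leftrightarrow R\in[R_0,1]$ to sort the segments $\ell_c(a_1,a_2,R)$ into chords of some $\gamma_{t_3}$ versus genuinely one-dimensional extremals, and Lemma~\ref{lemeta} to identify the $T$-images. This is exactly the content of Lemmas~\ref{lemmatriangle} and~\ref{lemconvgamma}.

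The one point where you diverge, and where your argument as written is shaky, is the claim that the contact set $\{\BellcPlus=\Psi_{t_3}\}$ ``is convex and therefore equals $\conv(\gamma_{t_3})$'': convexity plus the skeleton equality case only gives the inclusion $\supseteq\conv(\gamma_{t_3})$, and you correctly flag the reverse inclusion as the main obstacle. Your proposed patch — Carath\'eodory plus the representation of interior points as convex combinations of skeleton contact points — is precisely the reasoning the paper disclaims in a footnote, because $\Omegac$ is unbounded and the Carath\'eodory-type representation is not formally available. The paper closes this step differently: it never characterizes the contact set of $\Psi_{t_3}$ directly, but instead uses the covering of $\interior\Omegac$ by the segments $\ell_c(a_1,a_2,R)$ already established in Theorem~\ref{BellmanTheorem}, together with the uniqueness half of Lemma~\ref{lemmatriangle} (three distinct skeleton points spanning a linearity domain force the function $H$ of~\eqref{Hfunction} to have three double roots, hence $H\equiv0$, hence $t_0=t_1=t_2=0$ and the points lie on a single $\gamma_{t_3}$) and the pairwise disjointness of the hulls (Remark~\ref{remconv}). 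If you rework your argument to route the reverse inclusion through the segment covering rather than through Carath\'eodory — or alternatively extract it from the almost-everywhere saturation of pointwise Young's inequality for near-optimizing pairs $(f,g)$ — the proof is complete and matches the paper's.
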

\begin{figure}[h!]
\begin{center}
\includegraphics[scale=0.4]{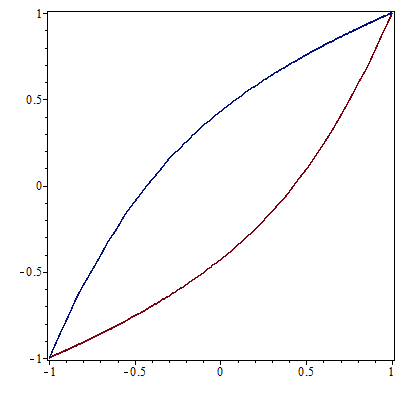}
\caption{The curves~$\eta_-$ and~$\eta_+$ bound the~$T$-image of 
the domain where~$\BellcPlus(x) = x_3^{\frac{1}{p}}x_4^{\frac{1}{q}}$.}
\label{figtrivdom}
\end{center}
\end{figure}

\begin{lem}\label{lemmatriangle}
Let~$t_3>0$. The function~$\BellcPlus$ is linear on the convex 
hull of~$\gamma_{t_3}$, moreover,~$\BellcPlus = 
x_3^{\frac{1}{p}}x_4^{\frac{1}{q}}$ there. 

Let~$a,b,c \in \FixedBoundary\Omegac$ be three distinct points such that 
their convex hull does not lie inside the topological boundary 
of~$\Omegac$. If~$\BellcPlus$ coincides with a linear 
function~$t_0+t_1x_1+t_2x_2+t_3x_3+t_4x_4$ on the said convex hull, 
then~$t_0=t_1=t_2=0$,~$pt_3(qt_4)^{p-1}=1$, and~$a,b,c \in\gamma_{t_3}$.
\end{lem}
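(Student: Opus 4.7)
My plan is to prove each assertion by producing an affine function $\Psi(x)=t_0+t_1x_1+t_2x_2+t_3x_3+t_4x_4$ matching $\BellcPlus$ on the relevant set and then applying Lemma~\ref{lem21}, which reduces the majorization $\Psi\ge\BellcPlus$ on $\Omegac$ to the one-variable condition $H\ge 0$.

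For the first assertion, I would set $t_0=t_1=t_2=0$, pick $t_4>0$ so that $pt_3(qt_4)^{p-1}=1$, and take $\Psi(x)=t_3x_3+t_4x_4$. A short computation using the constraint shows that $\Psi=pt_3|x_1|^p=x_1x_2=\BellcPlus$ at each point $(x_1,pt_3\phi(x_1),|x_1|^p,(pt_3)^q|x_1|^p)$ of $\gamma_{t_3}$, while for the same coefficients $H$ of \eqref{Hfunction} collapses to $|x_1|^p\bigl(t_3-[p(qt_4)^{p-1}]^{-1}\bigr)\equiv 0$; Lemma~\ref{lem21} then gives $\Psi\ge\BellcPlus$ globally on $\Omegac$. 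Concavity of $\BellcPlus$ upgrades the equality from $\gamma_{t_3}$ to $\conv(\gamma_{t_3})$: for $y=\sum\lambda_i x^{(i)}$ with $x^{(i)}\in\gamma_{t_3}$, concavity gives $\BellcPlus(y)\ge\sum\lambda_i\BellcPlus(x^{(i)})=\Psi(y)$, which combined with $\Psi\ge\BellcPlus$ yields equality. Finally, $\conv(\gamma_{t_3})$ lies in the hyperplane $x_4=(pt_3)^q x_3$, on which $x_3^{1/p}x_4^{1/q}=pt_3 x_3=\Psi(x)$, giving the identity $\BellcPlus=x_3^{1/p}x_4^{1/q}$ on the hull.

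For the second assertion, I would first argue that the $\Psi$ furnished by the hypothesis is a global majorant of $\BellcPlus$ on $\Omegac$: by the hypothesis that $\conv\{a,b,c\}$ is not contained in the topological boundary of $\Omegac$, the relative interior of this 2D triangle meets $\mathrm{int}\,\Omegac$, and concavity of $\BellcPlus$ coinciding with an affine $\Psi$ on a full 2D face forces $\Psi$ to be a supporting affine function from above. Lemma~\ref{lem21} then yields $t_3,t_4>0$ and $H\ge 0$. Since each of $a,b,c\in\FixedBoundary\Omegac$ satisfies $\BellcPlus=x_1x_2$, we obtain $\Phi(a_1,a_2)=\Phi(b_1,b_2)=\Phi(c_1,c_2)=0$, so these points are global minima of $\Phi$; in particular $a_1,b_1,c_1$ are zeros of $H$. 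Distinctness of $a,b,c$ forces distinctness of $a_1,b_1,c_1$, because $\partial_{x_2}\Phi=0$ at a minimum determines the second coordinate from the first.

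The decisive count is now this: each of $a_1,b_1,c_1$ is simultaneously a zero and a critical point of the nonnegative $H$ (the latter because $\partial_{x_1}\Phi=0$ at a minimum), so each is a ``double zero''; between two such double zeros of $H\ge 0$ there must lie a local maximum, yielding at least five zeros of $H'$ and hence at least four zeros of $H''$. By Lemma~\ref{Hsign}, however, $H''$ is either identically zero or has at most two zeros, so $H''\equiv 0$. Evaluating at $x_1=0$ forces $t_2=0$ (using $p>2$, so that $|x_1|^{p-2}$ vanishes at $0$), and then $H''\equiv 0$ becomes $pt_3(qt_4)^{p-1}=1$. The remaining $H(x_1)=t_0+t_1x_1$ has three zeros only if $t_0=t_1=0$. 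Finally, $\partial_{x_2}\Phi(a_1,a_2)=0$ with $t_2=0$ and $(qt_4)^{p-1}=1/(pt_3)$ gives $a_2=pt_3\phi(a_1)$, placing $a\in\gamma_{t_3}$; the same applies to $b$ and $c$. The main technical subtlety is justifying that $\Psi$ is a global majorant from the merely local hypothesis of agreement on a 2D face; once that is secured, the root-counting argument for $H''$ is essentially mechanical.
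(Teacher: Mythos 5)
Your strategy coincides with the paper's. For the first assertion you exhibit the affine majorant $t_3x_3+t_4x_4$ with $pt_3(qt_4)^{p-1}=1$ and use concavity to upgrade equality from $\gamma_{t_3}$ to $\conv(\gamma_{t_3})$; the paper does the same (via Young's inequality rather than Lemma~\ref{lem21}, but the computation is identical). For the second assertion, your root count — three double zeros of the nonnegative $H$ give five zeros of $H'$ and four of $H''$, forcing $H''\equiv 0$, hence $t_2=0$, then $pt_3(qt_4)^{p-1}=1$, then $t_0=t_1=0$ — is exactly the argument the paper compresses into ``see the proof of Lemma~\ref{Hsign}'', and deriving $a_2=pt_3\phi(a_1)$ from $\partial_{x_2}\Phi=0$ is equivalent to the paper's appeal to equality in Young's inequality. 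All of that is sound.

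The step that does not hold as you state it is the claim that concavity plus coincidence with an affine $\Psi$ on a two-dimensional set meeting $\interior\Omegac$ ``forces $\Psi$ to be a supporting affine function from above.'' Agreement on a set of dimension smaller than $\dim\Omegac$ never forces majorization: the concave function $-x_3^2$ agrees with the affine function $x\mapsto tx_3$ on the entire hyperplane $\{x_3=0\}$, yet $tx_3\ge -x_3^2$ fails between $0$ and $-t$ whenever $t\ne 0$. Indeed, since $\conv\{a,b,c\}$ is only two-dimensional in the four-dimensional $\Omegac$, there is a two-parameter family of affine functions coinciding with $\BellcPlus$ there, and the conclusion about the coefficients can only be true for the supporting one — so your ``main technical subtlety'' is real and your one-line justification of it is false. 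The repair is standard: take $x_0$ in the relative interior of the triangle with $x_0\in\interior\Omegac$; concavity furnishes a supporting affine $\Psi_0\ge\BellcPlus$ with $\Psi_0(x_0)=\BellcPlus(x_0)$; since $\BellcPlus$ is affine on the triangle, $\Psi_0-\BellcPlus$ is a nonnegative affine function on the triangle vanishing at a relative interior point, hence identically zero there, so $\Psi_0$ agrees with $\BellcPlus$ at $a,b,c$ and your argument applies to $\Psi_0$. (This also shows the lemma is properly read as a statement about the supporting plane; the paper elides the same point by silently invoking the setup of Lemma~\ref{lem22}.)
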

\begin{proof}
Note that
\begin{equation*}
\BellcPlus(x)=x_1x_2=x_3^{\frac{1}{p}}x_4^{\frac{1}{q}} = t_3 x_3+t_4x_4, 
\quad \hbox{where~$t_4$ is such that}\quad pt_3(qt_4)^{p-1}=1,
\end{equation*} 
for any~$x\in\gamma_{t_3}$. What is more, for 
any~$x \in \FixedBoundary \Omegac$, there is an inequality
\begin{equation*}
\BellcPlus(x)=x_1x_2 \leq \big|pt_3 |x_1|^p\big|^{\frac1p}
\big|qt_4|x_2|^q\big|^{\frac1q}\leq t_3 |x_1|^p+t_4|x_2|^q=t_3x_3+t_4x_4.
\end{equation*}
Thus,~$\BellcPlus(x) \leq t_3x_3+t_4x_4$ for $x\in \Omegac$. On the 
other hand, it follows from concavity of~$\BellcPlus$ 
that~$\BellcPlus(x) \geq t_3x_3+t_4x_4$ on~$\conv(\gamma_{t_3})$. 
Therefore,~$\BellcPlus(x) = t_3x_3+t_4x_4 = x_3^{\frac{1}{p}}x_4^{\frac{1}{q}}$ 
on~$\conv(\gamma_{t_3})$ since~$x_4 = (pt_3)^qx_3$ there. The first 
assertion of the lemma is proved.

By Lemma~\ref{lem22}, the parameters~$t_1,\,t_2,\,t_3,\,t_4$ should fall 
under~\eqref{t1},~\eqref{t3},~\eqref{t2}, and~\eqref{t4} for each of 
the pairs~$(a,b)$,~$(a,c)$, and~$(b,c)$. Consequently, the 
numbers~$a_1,b_1$, and~$c_1$ are distinct since~$a,b,c$ are 
distinct (if, say,~$a_1=b_1$, then the 
equations~\eqref{t1},~\eqref{t3},~\eqref{t2}, and~\eqref{t4} for the 
pairs~$(a,c)$ and~$(b,c)$ lead to the conclusion~$a_2=b_2$). What is 
more,~$t_3>0$ and~$t_4>0$. The function~$H$ given in~\eqref{Hfunction}, 
satisfies~$H(a_1)=H(b_1)=H(c_1)=H'(a_1)=H'(b_1)=H'(c_1)=0$, and thus 
equals to zero (see the proof of Lemma~\ref{Hsign}). 
Therefore,~$t_0=t_1=t_2=0$ and~$pt_3(qt_4)^{p-1}=1$. 
Further,~$t_3|a_1|^p+t_4|a_2|^q=B(a)=a_1a_2$. This is the case of equality 
in Young's inequality, which leads to~$a_2=pt_3\phi(a_1)$. This 
means~$a \in\gamma_{t_3}$. Similarly,~$b$ and~$c$ also lie on the same curve.
\end{proof}

\begin{rem}\label{remconv}
The convex hulls of the curves~$\gamma_{t_3}$,~$t_3>0$, are pairwise disjoint.
\end{rem}

\begin{lem}\label{lemconvgamma}
The image of the convex hull of~$\gamma_{t_3}$ under~$T$ is the region 
bounded by~$\eta_-$ and~$\eta_+$.
\end{lem}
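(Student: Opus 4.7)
My plan is to cover $\conv(\gamma_{t_3})$ by a family of chords of $\gamma_{t_3}$ that are simultaneously leaves $\ell_c(a_1, a_2, R)$ of the Bellman foliation with $R \in [R_0, 1]$, and then to transport everything to the $(y_1, y_2)$-plane using the parametrization $\eta$ of Lemma~\ref{lemeta}. Concretely, for any $a_1 \ne 0$ and any $R \in [R_0, 1]$, setting $a_2 = pt_3\varphi(a_1)$ puts $a = (a_1, a_2, |a_1|^p, |a_2|^q)$ on $\gamma_{t_3}$, and I verify that the second endpoint $b$ of $\ell_c(a_1, a_2, R)$ also lies on $\gamma_{t_3}$. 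The condition $b_2 = pt_3\varphi(b_1)$ reduces, after substitution, to $\varphi(R) = \rho(R)^{p-1}$, which on $[R_0, 1]$ is immediate from $\rho(R) = R > 0$ (both sides then equal $R^{p-1}$). Hence $\ell_c(a_1, pt_3\varphi(a_1), R) \subset \conv(\gamma_{t_3})$, and by the remark following Lemma~\ref{lemeta} its $T$-image is the curve $\tau \mapsto \eta(\tau, R)$ (for $a_1 > 0$) or its central reflection $\tau \mapsto -\eta(\tau, R)$ (for $a_1 < 0$).

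For the reverse inclusion $T(\conv(\gamma_{t_3})) \subseteq$ lens, I invoke Lemma~\ref{lemmatriangle}: the identity $\BellcPlus = x_3^{1/p}x_4^{1/q}$ holds on $\conv(\gamma_{t_3})$. Any interior point $x$ lies on some leaf $\ell_c(a_1', a_2', R')$ with $a_1'a_2' > 0$ by the second assertion of Theorem~\ref{BellmanTheorem}. Along this leaf $\BellcPlus$ has the explicit linear form~\eqref{Linearity}, while $x_3^{1/p}x_4^{1/q}$ computes to $|a_1'a_2'|(\tau + (1-\tau)\rho(R')^p)^{1/p}(\tau + (1-\tau)|R'|^p)^{1/q}$. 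Equating these two expressions at $\tau = \tau_x$ (the parameter of $x$) and invoking the equality condition of H\"older's inequality forces $\rho(R') = |R'|$, while matching signs additionally forces $\varphi(R') = |R'|^{p-1}$, i.e., $R' \geq 0$; together these give $R' \in [R_0, 1]$. Hence $T(x)$ lies on the curve $\eta(\cdot, R')$ or its reflection, and in particular inside the lens.

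The main technical obstacle is the surjectivity used above, namely that $\{\eta(\tau, R) : (\tau, R) \in [0,1] \times [R_0, 1]\}$ together with its central reflection actually fills the entire closed lens rather than just a proper subset. This is essentially a restatement of Lemma~\ref{lemeta} restricted to the parameter range $R \in [R_0, 1]$, and it follows from the monotonicity facts already established in that proof: for fixed $y_1 \in (-1, 1)$, $\partial\eta_1/\partial\tau < 0$ in~\eqref{eta1tau} yields a unique $\tau_1(R, y_1)$ solving $\eta_1(\tau, R) = y_1$, and $Y_2'(R) > 0$ on $(R_0, 1)$, extracted from~\eqref{Delta2}, shows the composite $R \mapsto Y_2(R) = \eta_2(\tau_1(R, y_1), R)$ increases continuously from the ordinate of $\eta_-$ corresponding to $y_1$ (at $R = R_0$) up to $y_1$ itself (at $R = 1$). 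Thus $(\tau, R)$ sweeps the half-lens $\{y_2 \leq y_1\}$ bijectively, and central symmetry through the origin provides the other half.
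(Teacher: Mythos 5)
Your proof is correct, and its overall skeleton coincides with the paper's: (i) the chords $\ell_c(a_1,pt_3\varphi(a_1),R)$ with $R\in[R_0,1]$ have both endpoints on $\gamma_{t_3}$ and hence lie in $\conv(\gamma_{t_3})$, so their $T$-images $\eta(\cdot,R)$ (and the central reflections) are contained in the image; (ii) conversely, any leaf through an interior point of $\conv(\gamma_{t_3})$ must have $R\in[R_0,1]$; (iii) the monotonicity of $Y_2$ from the proof of Lemma~\ref{lemeta} identifies $\{\eta(\tau,R)\colon\tau\in[0,1],\,R\in[R_0,1]\}$ with the part of the lens below the diagonal. The genuine difference is in step (ii). The paper first argues, via the three-point rigidity part of Lemma~\ref{lemmatriangle} together with Remark~\ref{remconv}, that the endpoints $a,b$ of the leaf themselves lie on $\gamma_{t_3}$, and then the relation $a_2/\varphi(a_1)=b_2/\varphi(b_1)$ forces $\varphi(\rho(R))=\varphi(R)$, hence $\rho(R)=R$. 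You instead use only the first part of Lemma~\ref{lemmatriangle} (the identity $\BellcPlus=x_3^{1/p}x_4^{1/q}$ on $\conv(\gamma_{t_3})$), compare it with the linear expression~\eqref{Linearity} along the leaf, and read off $R'\geq0$ and $\rho(R')=|R'|$ from the equality case of H\"older's inequality for the two-term sums $\tau+(1-\tau)\rho^p$ and $\tau+(1-\tau)|R'|^p$. This buys you a more self-contained argument: you bypass the slightly delicate point, implicit in the paper's appeal to the rigidity lemma, that linearity of the concave function on the chord and on $\conv(\gamma_{t_3})$ (which meet at an interior point) extends to a common affine majorant. The only caveat, which is harmless, is that the H\"older equality argument needs $\tau\in(0,1)$; the excluded endpoints lie on the skeleton and are sent by $T$ to the corners $(\pm1,\pm1)$ of the square.
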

\begin{proof}
Let the~$T$-image of~$x \in \conv\gamma_{t_3}$ lie below the main 
diagonal of~$[-1,1]^2$. We know that for some~$R \in [-1,1]$, the 
curve~$\eta(\cdot, R)$ passes through~$T(x)$. Therefore, there exists a 
chord with the endpoints~$a$ and~$b$, defined by~\eqref{corabpoints}, 
which contains~$x$. Lemma~\ref{lemmatriangle} and Remark~\ref{remconv} 
lead to the inclusion~$a,b\in \gamma_{t_3}$. Thus,
\begin{equation*}
\frac{a_2}{\phi(a_1)}=pt_3=\frac{b_2}{\phi(b_1)}
=\frac{\phi(R)a_2}{\phi(\rho(R))\phi(a_1)},
\end{equation*}
which leads to~$\rho(R)=R$, which is~$R \in [R_0,1]$.

If~$R\in [R_0,1]$, then with the choice~$a_2=\phi(a_1)pt_3$, the 
points~$a$ and~$b$ given by~\eqref{corabpoints}, lie on~$\gamma_{t_3}$. 
Consequently, the chord that connects them lies inside~$\conv(\gamma_{t_3})$. 
Therefore, the part of the set~$T(\conv(\gamma_{t_3}))$ that lies below 
the diagonal of~$[-1,1]^2$, coincides with the 
set~$\{\eta(\tau,R)\colon \tau \in[0,1], R \in [R_0,1]\}$. 
It remains to notice that the latter set is exactly the region 
bounded by~$\eta_-$ and the main diagonal (this follows from the 
bijectivity of~$T$ and the monotonicity of~$Y_2$, see the proof 
of Lemma~\ref{lemeta}).
\end{proof}

\begin{proof}[Proof of Proposition~\textup{\ref{propfol}}]
Proposition~\ref{propfol} follows from Lemmas~\ref{lemmatriangle} 
and~\ref{lemconvgamma}.
\end{proof}

\section{The computation of~$c_{p,r}^{\star,\mathbb{R}}$}
\label{s4}
By Proposition~\ref{CFromBell} and Lemma~\ref{SimpleProperties},
\begin{equation*}
(c_{p,r}^{\star,\mathbb{R}})^{-1} = 
\sup_{|x_1|^p< x_3}\frac{|\BellcPlus(x_1,0,x_3,x_4)|^r}{(x_3^{r/p}-|x_1|^r)x_4^{r/q}}.
\end{equation*}

Consider the segment~$\ell_c(a_1,a_2, R)$ with the 
endpoints~$a=(-1,-1,1,1)$ and~$b=(\rho(R),\phi(R),\rho(R)^p,|R|^p)$, 
defined by the parameter~$R\in(-1,1)$. We may restrict our attention to 
such segments only, due to homogeneity considerations. Note that 
if~$R<0$, then~$\ell_c(a_1,a_2,R)$ does not contain points~$x$ such 
that~$x_2=0$. If~$R\geq 0$ such a point~$\tau a +(1-\tau)b$ corresponds 
to the value~$\tau = \frac{\phi(R)}{1+\phi(R)}$ and its coordinates are 
\begin{equation}\label{eqcoord}
x_1 = \frac{\rho(R)-\phi(R)}{1+\phi(R)},
\quad x_2=0,
\quad x_3=\frac{\rho(R)^p+\phi(R)}{1+\phi(R)},
\quad x_4=\frac{R^p+\phi(R)}{1+\phi(R)}.
\end{equation}
The value of~$\BellcPlus$ at this point is
\begin{equation}\label{eqval}
\BellcPlus(\tau a + (1-\tau) b) = \frac{(\rho(R)+1)\phi(R)}{1+\phi(R)}
\end{equation}
according to Theorem~\ref{BellmanTheorem}.

Thus, we need to maximize 
\begin{multline*}
M_r(R)=\frac{|\BellcPlus(x_1,0,x_3,x_4)|^r}{x_3^{r/p}-|x_1|^r} \cdot x_4^{-r/q} 
=\\ \left(\frac{(\rho(R)+1)\phi(R)}{1+\phi(R)}\right)^r \cdot 
\frac{(1+\phi(R))^{r/q}}{(R^p+\phi(R))^{r/q}
\Big(\big(\frac{\rho(R)^p+\phi(R)}{1+\phi(R)}\big)^{r/p}-
\big|\frac{\phi(R)-\rho(R)}{1+\phi(R)}\big|^r\Big)}\,,
\end{multline*}
when~$R \in [0,1]$. We set~$r=p$ and concentrate on the proof of 
Theorem~\ref{Cthm} in the real-valued case. Let also~$M = M_p$. 
Note that~$\rho(R)\geq R\geq R^{p-1}=\phi(R)\geq 0$. 
We slightly modify the expression~$M(R)$. Consider the function
\begin{equation*}
Q\colon (t,s) \mapsto \frac{(s^p+t)(1+t)^{p-1}-(s-t)^p}{t(1+s)^p}, 
\quad s\geq t>0.
\end{equation*}
We extend it to the case~$t=0$ by continuity.

By an elementary computation, 
\begin{equation*}
\frac{1}{M(R)}= Q\big(R^{p-1},\rho(R)\big)\cdot 
\left(\frac{1+R}{1+R^{p-1}}\right)^{p-1} 
\geq Q\big(R^{p-1},\rho(R)\big).
\end{equation*}

Let us show that the latter expression attains its minimum at~$R=0$. Clearly, 
\begin{equation*}
\frac{1}{M(0)}= Q\big(0,\rho(0)\big).
\end{equation*}

We investigate~$Q$. Fix~$s>0$ and consider the 
function~$h_1(t)\df(1+s)^p Q(t,s)$. By 
continuity,~$h_1(0) = (p-1)s^p+1+ps^{p-1}$. 
We want to show~$h_1(t)\geq h_1(0)$ when~$t \geq 0$. This is 
equivalent to~$h_2(t)\df th_1(t)-th_1(0)\geq 0$. We differentiate~$h_2$ 
and get~$h_2'(0)=0$ and
\begin{multline}\label{eqh2}
h_2''(t) = \frac{\partial^2 }{\partial t^2}\Big(t(1+s)^pQ(t,s)\Big) 
= \frac{\partial^2 }{\partial t^2}\Big((s^p+t)(1+t)^{p-1}-(s-t)^p\Big)=
\\
2(p-1)(1+t)^{p-2}+(p-1)(p-2)(s^p+t)(1+t)^{p-3}-p(p-1)(s-t)^{p-2}. 
\end{multline}
By Young's inequality,
\begin{equation*}
\frac{2}{p}(1+t)^{p-2}+\frac{p-2}{p}s^p(1+t)^{p-3}
\geq s^{\frac{p-2}{p}p}(1+t)^{\frac{2}{p}(p-2)+\frac{p-2}{p}(p-3)}
=s^{p-2}(1+t)^{\frac{(p-1)(p-2)}{p}}\geq s^{p-2}\geq (s-t)^{p-2}.
\end{equation*}
Taking into account~\eqref{eqh2}, this leads to~$h_2''(t) \geq 0$. 
Thus, the function~$h_2$ is convex when~$t\geq 0$ and satisfies the 
identities~$h_2(0)=h_2'(0)=0$. 
Consequently,~$h_2(t)\geq 0$,~$h_1(t)\geq h_1(0)$, and 
\begin{equation*}
Q(t,s)\geq Q(0,s)=  \frac{(p-1)s^p+1+ps^{p-1}}{(1+s)^p}.
\end{equation*}
Let us find the minimum (with respect to~$s$) of $Q(0,s)$. We differentiate:
\begin{equation*}
\frac{\partial}{\partial s}Q(0,s) 
=\frac{p(p-1)s^{p-2}}{(1+s)^{p-1}}-p\frac{(p-1)s^p+1+ps^{p-1}}{(1+s)^{p+1}}
=p\frac{(p-1)s^{p-2}+(p-2)s^{p-1}-1}{(1+s)^{p+1}}.
\end{equation*}
There exists a unique positive point where the latter expression changes 
sign from negative to positive, call it~$s_0$. Then~$s_0$ 
satisfies~\eqref{eqs0}, which might be further rewritten in terms of~$\lambda$ 
\begin{align*}
\label{eqs0}
(p-1)s_0^{p-2}(1+s_0)&=1+s_0^{p-1};
\\
(p-1)-\frac{p-1}{1+s_0^{p-1}}=\frac{(p-1)s_0^{p-1}}{1+s_0^{p-1}} 
&= \frac{s_0}{1+s_0}=1-\frac{1}{1+s_0}; \notag
\\
\lambda(s_0)=\frac{1}{1+s_0}-\frac{p-1}{1+s_0^{p-1}}&=2-p=\lambda(0).\notag
\end{align*}

In other words,~$s_0=\rho(0)$. Thus, for any~$s\geq t \geq 0$ we have 
proved the chain of inequalities
\begin{equation*}
Q(t,s)\geq Q(0,s)\geq Q(0,\rho(0)),
\end{equation*}
which leads to
\begin{equation*}
\frac{1}{M(R)}\geq Q\big(R^{p-1},\rho(R)\big)\geq Q(0,\rho(0)) = \frac{1}{M(0)}.
\end{equation*}

Therefore,
\begin{multline*}
(c_{p,p}^{\star,\mathbb{R}})^{-1}=M(0) = \frac{1}{Q(0,s_0)}= 
\frac{(1+s_0)^p}{ (p-1)s_0^p+ps_0^{p-1}+1}
=\frac{(1+s_0)^p}{s_0+s_0^p+s_0^{p-1}+1}
=\\
\frac{(1+s_0)^{p-1}}{1+s_0^{p-1}}
=\frac{1}{(p-1)}\cdot\Big(\frac{1+s_0}{s_0}\Big)^{p-2},
\end{multline*}
which proves Theorem~\ref{Cthm} for the case of real-valued functions.

\section{Proof of Proposition~\ref{Complexc}}\label{s5}
Let~$\p \in [2,\infty)$, let~$p=\p$, $q=\frac{p}{p-1}$; the case 
$\p \in (1,2)$ is completely similar. 
\begin{defn}
Consider the function~$\BellcC\colon 
\mathbb{C}^2\times\mathbb{R}^2\mapsto \mathbb{R}$ given by the formula 
\begin{equation*}
\BellcC(z_1,z_2,x_3,x_4) = \sup\Big\{\re(\av{f\bar g}{I})\,\Big|\; 
\av{f}{I}=z_1, 
\av{g}{I}=z_2, \av{|f|^p}{I}=x_3, \av{|g|^q}{I}=x_4\Big\}.
\end{equation*}
The natural domain for~$\BellcC$ is 
\begin{equation*}
\OmegacC = \Big\{x=(z_1,z_2,x_3,x_4)\in 
\mathbb{C}^2\times\mathbb{R}^2\,\Big|\; x_3 \geq |z_1|^p, x_4 \geq |z_2|^q\Big\}.
\end{equation*}
\end{defn}

Similarly to Lemma~\ref{SimpleProperties}, the function~$\BellcC$ is 
the minimal among concave functions~$G\colon \OmegacC\to \mathbb{R}$ that 
satisfy the boundary conditions
\begin{equation*}
G(z_1,z_2,|z_1|^p,|z_2|^q) = \re(z_1\bar z_2).
\end{equation*}
We also have complex homogeneity,
\begin{equation*}
\BellcC(\zeta z_1,\zeta z_2, x_3,x_4) = \BellcC(z_1,z_2,x_3,x_4),
\quad |\zeta| = 1.
\end{equation*}
Similarly to Proposition~\ref{CFromBell},
\begin{equation*}
c_{p,r}^{\star} = \bigg(\sup_{|z_1| \in (0,1)} 
\frac{\big(\BellcC(z_1,0,1,1)\big)^r}{1 - |z_1|^{r}}\bigg)^{-1} = 
\bigg(\sup_{z_1 \in (0,1)} \frac{\big(\BellcC(z_1,0,1,1)\big)^r}{1 - z_1^{r}}
\bigg)^{-1},
\end{equation*}
the latter identity follows from the complex homogeneity above. We 
will prove the identity
\begin{equation*}
\BellcC(x) = \BellcPlus(x), \quad x\in \Omegac.
\end{equation*}
The inequality~$\BellcPlus(x) \leq \BellcC(x)$ is evident. For the 
reverse inequality, it suffices to show that the function 
\begin{equation*}
(z_1,z_2,x_3,x_4) \mapsto \BellcPlus(\re(z_1),\re(z_2),x_3,x_4)
\end{equation*}
majorizes~$\BellcC$ on~$\OmegacC$. The said function is concave, so, 
it suffices to verify the inequality on~$\FixedBoundary\OmegacC$:
\begin{equation*}
\BellcPlus(\re(z_1),\re(z_2),|z_1|^p,|z_2|^q) \geq \re(z_1\bar z_2),
\quad z_1,z_2 \in \mathbb{C}.
\end{equation*}
 
The maximal value of~$\re(z_1 \bar{z}_2)$ 
provided~$x_1=\re(z_1)$,~$x_2=\re(z_2)$,~$x_3=|z_1|^p$, 
and~$x_4=|z_2|^q$ are fixed, 
is~$x_1x_2+\sqrt{\big(x_3^{\frac{2}{p}}-x_1^2\big)
\big(x_4^{\frac{2}{q}}-x_2^2\big)}$. Thus, it suffices to prove 
\begin{equation}\label{eq2}
\BellcPlus(x_1,x_2,x_3,x_4) \geq x_1x_2+
\sqrt{\big(x_3^{\frac{2}{p}}-x_1^2\big)\big(x_4^{\frac{2}{q}}-x_2^2\big)}.
\end{equation}
We invoke Theorem~\ref{BellmanTheorem} and verify this inequality 
on each chord~$\ell_c(a_1,a_2,R)$ individually.

Pick some~$R\in[-1,1]$ and consider the chord~$\ell_c(a_1,a_2,R)$ with 
the endpoints~$a=(-1,-1,1,1)$ and~$b=(\rho(R),\phi(R),\rho(R)^p,|R|^p)$ 
(by homogeneity, we may consider such chords only). We also pick a 
point~$x=\tau a+(1-\tau)b$ on~$\ell_c(a_1,a_2,R)$, here~$\tau\in (0,1)$. 
If~$R \in [R_0,1]$, then~$\rho=R$. Therefore, by 
Theorem~\ref{BellmanTheorem},~$\BellcPlus(x)=x_3^{\frac{1}{p}}x_4^{\frac{1}{q}}$, 
and the inequality~\eqref{eq2} is simple (this is nothing more 
than~$|z_1z_2|\geq \re(z_1\bar z_2)$). So, we 
assume~$R \in [-1,R_0]$ in what follows.

By Theorem~\ref{BellmanTheorem},~$\BellcPlus(x) = 
\tau+(1-\tau)\rho(R)\phi(R)$. The coordinates of~$x$ are
\begin{align*}
x_1&=-\tau+(1-\tau)\rho,\\
x_2&=-\tau+(1-\tau)\phi,\\
x_3&=\tau+(1-\tau)\rho^p,\\
x_4&=\tau+(1-\tau)|R|^p.
\end{align*}
Thus, we may represent~\eqref{eq2} as
\begin{equation*}
\begin{split}
\tau+(1-\tau)\rho \phi \geq& 
\big((1-\tau)\rho-\tau\big)\big((1-\tau)\phi-\tau\big)+\\
&\sqrt{(\tau+(1-\tau)\rho^p)^{\frac{2}{p}}-((1-\tau)\rho-\tau)^2}
\,\,\sqrt{(\tau+(1-\tau)|R|^p)^{\frac{2}{q}}-((1-\tau)\phi-\tau)^2},
\end{split}
\end{equation*}
which might be further rewritten as
\begin{equation*}
\tau^2(1-\tau)^2(1+\rho)^2(1+\phi)^2\geq 
\bigg[(\tau+(1-\tau)\rho^p)^{\frac{2}{p}}-((1-\tau)\rho-\tau)^2\bigg]\,
\bigg[(\tau+(1-\tau)|R|^p)^{\frac{2}{q}}-((1-\tau)\phi-\tau)^2\bigg].
\end{equation*}

This is equivalent to 
\begin{equation}\label{eq3}
S_1(R,\tau) S_2(R,\tau) \leq \tau^2(1-\tau)^2, 
\end{equation}
where
\begin{equation*}
S_1(R,\tau) = \frac{(\tau+(1-\tau)\rho(R)^p)^{\frac{2}{p}}
-((1-\tau)\rho(R)-\tau)^2}{(1+\rho(R))^2},
\end{equation*}
\begin{equation*}
S_2(R,\tau) = \frac{(\tau+(1-\tau)|R|^p)^{\frac{2}{q}}
-((1-\tau)\phi(R)-\tau)^2}{(1+\phi(R))^2}.
\end{equation*}

Since~$\rho(R) \in [R_0,1]$, we have 
\begin{equation*}
S_1(\rho(R),\tau) S_2(\rho(R),\tau) \leq \tau^2(1-\tau)^2
\end{equation*}
because this inequality is equivalent to~\eqref{eq2} 
with~$\BellcPlus(x_1,x_2,x_3,x_4)$ replaced with~$x_3^{\frac1p}x_4^{\frac1q}$.
Moreover,~$S_1(R,\tau) = S_1(\rho(R),\tau)$ since~$\rho(\rho(R))=\rho(R)$. 
Therefore, it suffices to show that~$S_2(R,\tau) \leq S_2(\rho(R),\tau)$ 
when $R \in (-1,R_0]$ (note that both~$S_1$ and~$S_2$ are non-negative).

Consider the function~$S(R,\tau) =S_2(R,\tau)-S_2(\rho(R),\tau)$. 
Note that~$S_2(R,0)=S_2(\rho(R),0)=S_2(R,1)=S_2(\rho(R),1)=0$, which 
leads to~$S(R,0)=S(R,1)=0$. Let us show that the function~$S(R,\cdot)$ is 
convex on~$[0,1]$. We compute its second derivative:
\begin{equation*}
\frac{q^2}{2(2-q)} \, \frac{\partial^2}{\partial \tau^2}S(R,\tau)=
\Big(\frac{1-|R|^p}{1+\phi(R)}\Big)^2 
(\tau+(1-\tau)|R|^p)^{\frac{2}{q}-2}-
\Big(\frac{1-\rho(R)^p}{1+\rho(R)^{p-1}}\Big)^2 
(\tau+(1-\tau)\rho(R)^p)^{\frac{2}{q}-2}.
\end{equation*}
Thus, convexity of~$S(R,\cdot)$ on~$[0,1]$ is equivalent to
\begin{equation*}
\Big(\frac{1+\phi(R)}{1-|R|^p}\Big)^{p} (\tau+(1-\tau)|R|^p) 
\leq \Big(\frac{1+\rho(R)^{p-1}}{1-\rho(R)^p}\Big)^{p} (\tau+(1-\tau)\rho(R)^p).
\end{equation*}
This inequality is linear with respect to~$\tau$. We prove it at 
the endpoints~$\tau=0$ and~$\tau=1$. At these points, it turns into:
\begin{align}
\label{eq4} \Big(\frac{1+\phi(R)}{1-|R|^p}\Big)|R| &
\leq \Big(\frac{1+\rho(R)^{p-1}}{1-\rho(R)^p}\Big)\rho(R),
\\
\label{eq5} 
\frac{1+\phi(R)}{1-|R|^p} &\leq \frac{1+\rho(R)^{p-1}}{1-\rho(R)^p}.
\end{align}

First, we will show~\eqref{eq5}. Second, we will justify~$\rho(R)\geq |R|$. 
Then, inequality~\eqref{eq4} will follow from~\eqref{eq5}.

The function~$u \colon t \mapsto \frac{1+t|t|^{p-2}}{1-|t|^p}$ is 
increasing on~$(-1,1)$:
\begin{equation*}
u'(t) = |t|^{p-2}\frac{(p-1)+pt+|t|^{p}}{(1-|t|^p)^2} \geq 0.
\end{equation*}
The inequality~$u(R) \leq  u(\rho(R))$ is exactly~\eqref{eq5}. It remains to verify that~$\rho(R)\geq |R|$. 

If~$R\geq 0$, then~$R \leq R_0\leq \rho(R)$. So, we consider the case~$R \in [-1,0]$ only. 
\begin{lem}\label{Lem52}
For any~$t \geq 0$, we have~$\lambda(-t)\geq \lambda(t)$.
\end{lem}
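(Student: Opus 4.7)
The plan is to reduce the claim to an explicit one-variable inequality that can be settled by elementary calculus. Since $\phi(-t) = -t^{p-1}$ for $t \ge 0$, substituting into \eqref{lambdadef} and combining fractions gives, for $t \in (0,1)$,
\begin{equation*}
\lambda(-t) - \lambda(t) \;=\; \frac{2t}{1-t^{2}} \;-\; \frac{2(p-1)\,t^{p-1}}{1-t^{2(p-1)}},
\end{equation*}
while at the endpoints $t = 0$ and $t = 1$ one checks $\lambda(-t) = \lambda(t)$ directly from~\eqref{lambdadef}. Since both denominators are positive on $(0,1)$, the lemma reduces (in the range $t \in [0,1]$, which is the one actually needed in the surrounding argument on $\rho(R)\ge|R|$) to
\begin{equation*}
G(t) \df 1 - t^{2(p-1)} - (p-1)\,t^{p-2}\bigl(1 - t^{2}\bigr) \;\ge\; 0, \qquad t \in [0,1].
\end{equation*}

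To establish this I would argue by monotonicity. A direct differentiation yields the clean factorization
\begin{equation*}
G'(t) \;=\; (p-1)\,t^{p-3}\bigl[\,pt^{2} - 2t^{p} - (p-2)\,\bigr],
\end{equation*}
so everything hinges on the bracket $P(t) \df pt^{2} - 2t^{p} - (p-2)$. One immediately verifies
\begin{equation*}
P(1) = 0 \qquad \text{and} \qquad P'(t) = 2pt\,(1 - t^{p-2}) \;\ge\; 0 \text{ on } [0,1],
\end{equation*}
so $P$ is nondecreasing on $[0,1]$ with $P(1)=0$, which forces $P\le 0$ throughout. Consequently $G'(t)\le 0$ on $(0,1]$, and combining this with the boundary value $G(1)=0$ (via the mean value theorem on $[t,1]$ for each $t\in(0,1)$) yields $G(t)\ge 0$ on $(0,1]$; the remaining endpoint is covered by $G(0)=1>0$.

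The only delicate point worth flagging in advance is that for $p\in(2,3)$ the factor $t^{p-3}$ in the expression for $G'$ blows up at $t=0$, so $G$ is not $C^{1}$ there. This is harmless: $G$ is continuous on $[0,1]$ and differentiable on $(0,1]$, and the sign of $G'$ on the open interval is all the monotonicity step requires. I do not foresee any deeper obstacle; the crux is the factorization of $G'$, which converts the original inequality into the trivial monotonicity of $P$.
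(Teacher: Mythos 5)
Your proof is correct and follows essentially the same route as the paper's: reduce to $1-t^{2(p-1)}-(p-1)t^{p-2}(1-t^2)\ge 0$, differentiate, and extract the factor $(p-1)t^{p-3}\bigl(pt^2-2t^p-(p-2)\bigr)$; you merely add the (easy but omitted in the paper) verification of the sign of the bracket via the monotonicity of $P$. Your explicit restriction to $t\in[0,1]$ is also the right call --- the inequality as literally stated fails for $t>1$ (e.g.\ $p=3$, $t=2$), and both your argument and the paper's implicitly use only the range $t\in[0,1]$ that the application to $\rho(R)\ge|R|$ requires.
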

\begin{proof}
Using the definition of~$\lambda$, we rewrite this as
\begin{equation*}
\frac{1}{1-t}-\frac{1}{1+t}=\frac{2t}{1-t^2}\geq 
\frac{2(p-1)t^{p-1}}{1-t^{2(p-1)}}=\frac{(p-1)}{1-t^{p-1}}-\frac{(p-1)}{1+t^{p-1}},
\end{equation*}
which, in its turn, is equivalent to
\begin{equation*}
1-t^{2(p-1)}-(p-1)t^{p-2}(1-t^2)\geq 0.
\end{equation*}
The left hand-side vanishes at~$t=1$ and decreases on~$t \in (0,1)$; 
here is the derivative of the left hand-side:
\begin{equation*}
-2(p-1)t^{2p-3}-(p-1)(p-2)t^{p-3}+(p-1)pt^{p-1}=-
(p-1)t^{p-3}(2t^{p}-pt^{2}+(p-2))\leq 0.
\end{equation*}
\end{proof}

Thus, we have established~$\lambda(-\rho(R))\geq \lambda(\rho(R))
=\lambda(R)$ when~$R \in (-1,0)$. By Lemma~\ref{RealR0}, the 
function~$\lambda$ is decreasing on~$[-1,R_0]$, 
consequently,~$-\rho(R) \leq R$. Therefore,~$|R|\leq \rho(R)$. The 
inequality~\eqref{eq5} is proved, and~\eqref{eq4} follows from it.

\section{The computation of~$\BelldPlus$}\label{s6}
By Corollary~\ref{GeneralConvex2}, the union of the graphs 
of~$\BelldPlus$ and~$\BelldMinus$ coincides with the boundary 
of~$\mathbb{K}$. By Corollary~\ref{DescriptionOfConvexHull}, 
the said boundary consists of the segments~$\mathfrak{L}(a_1,a_2,R)$ 
and two additional sets. Clearly, a 
point~$(x_1,\Bell_{d,\pm}(x),x_3,x_4,x_5)$,~$x \in \mathrm{int}\, \Omegad$, 
does not belong to any of these exceptional sets. Thus, each of the 
graphs of~$\BelldPlus$ and~$\BelldMinus$ on the interior of~$\Omegad$ 
consists of the segments~$\ell_d(a_1,a_2,R)$. To prove 
Theorem~\ref{BellmanTheorem2} for real-valued functions, it suffices 
to show that a segment~$\ell_d(a_1,a_2,R)$ cannot lie on the graph 
of~$\BelldPlus$ if~$a_2 < 0$: then, similarly, the 
segments~$\ell_d(a_1,a_2,r)$ with~$a_2 > 0$ do not lie on the graph 
of~$\BelldMinus$, thus, they foliate the graph of~$\BelldPlus$.

Assume the contrary: let~$a_2<0$ and let the segment~$\ell_d(a_1,a_2,R)$ 
lie on the graph of~$\BelldPlus$. Then, there exist~$t_0,t_1,t_3,t_4,t_5$ 
such that the subgraph of the affine 
function~$x\mapsto t_0 +t_1x_1+t_3x_3+t_4x_4+t_5x_5$ contains~$\mathbb{K}$ 
and its graph contains the endpoints of~$\ell_{d}(a_1,a_2,R)$. In other 
words, the inequality
\begin{equation}\label{eqn3}
\Phi_2(x_1,z):=t_0+t_1x_1+t_3|x_1|^p+t_4|z|^q+t_5x_1z-z\geq 0
\end{equation}
holds for any~$x_1, z \in \mathbb{R}$; moreover,~\eqref{eqn3} turns 
into equality at the points~$(x_1,z)=(a_1, a_2)$ 
and~$(x_1,z) = (-\rho(R)a_1, -\phi(R)a_2)$.  
The derivative of~$\Phi_2$ with respect to~$z$ vanishes at the 
points~$(x_1,z)=(a_1, a_2)$ and~$(x_1,z) = (-\rho(R)a_1, -\phi(R)a_2)$. 
Therefore, we have a system of equations
\begin{equation*}
t_5a_1+ qt_4a_2|a_2|^{q-2}-1=0,
\qquad -t_5\rho(R)a_1- qt_4a_2|a_2|^{q-2}\phi(R)|\phi(R)|^{q-2} -1=0.
\end{equation*}
Note that this system does not have solutions when~$R\in [R_0,1]$. 
Using the identity~$\phi(R)|\phi(R)|^{q-2} =R$, we solve this equations 
for~$t_4$ and~$t_5$:
\begin{equation*}
t_5 = -\frac{1+R}{(\rho(R)-R)a_1},\qquad
qt_4 = \frac{(1+\rho(R))}{(\rho(R)-R)a_2|a_2|^{q-2}}.
\end{equation*}
The function~$\rho$ is positive, moreover, for~$R \in [-1,R_0)$, 
we have~$\rho(R)>R$. Therefore, the sign of~$qt_4$ coincides with 
the sign of~$a_2$. Thus, if~$a_2<0$, then~$t_4<0$. This 
contradicts~\eqref{eqn3} for sufficiently large~$|z|$.

\section{The computation of~$d_{p,p}^{\star,\mathbb{R}}$}
\label{s7}
By Proposition~\ref{DFromBell},
\begin{equation}\label{dpp}
d_{p,p}^{\star,\mathbb{R}} = \bigg(\sup_{x_1 \in (-1,1)} 
\frac{\BelldPlus^p(x_1,1,1,0)}{1 - |x_1|^{q}}\bigg)^{-1}.
\end{equation}
Consider the segment~$\ell_d(a_1,a_2,R)$ with~$a_2>0$,~$a_1\ne 0$ 
and~$R \in [-1,R_0)$ (the point~$(x_1,1,1,0)$ lies in the interior 
of~$\Omegad$). We find the point~$x$ on this segments such that~$x_5=0$:
\begin{equation*}
(\tau+(1-\tau)\rho(R)\phi(R))a_1a_2=0 \quad \Longleftrightarrow 
\quad \tau = \frac{-\rho(R)\phi(R)}{1-\rho(R)\phi(R)}, \quad 1-\tau 
= \frac{1}{1-\rho(R)\phi(R)}.
\end{equation*}
The real~$\tau$ belongs to~$[0,1]$, which is equivalent to~$R\in [-1,0]$. 
Here are all the other coordinates of~$x$ as well as the value of 
$\BelldPlus$ at~$x$:
\begin{align}
x_1 = (\tau-(1-\tau)\rho)a_1 = -\frac{\rho(1+\phi)}{1-\rho\phi}a_1
\\
x_3 = (\tau +(1-\tau)\rho^p) |a_1|^p =\frac{\rho^p-
\rho\phi}{1-\rho\phi} |a_1|^p,
\\
x_4 = (\tau +(1-\tau)|R|^p)a_2^q = \frac{|R|^p-\rho\phi}{1-\rho\phi} a_2^q,\\
\BelldPlus(x) = (\tau -(1-\tau)\phi) a_2 = -\frac{\phi(1+\rho)}{1-\rho\phi} a_2.
\end{align}
Choosing appropriate~$a_1$ and~$a_2$ to get~$x_3 = x_4 = 1$, and 
plugging this back into~\eqref{dpp}, we see 
that~$(d_{p,p}^{\star,\mathbb{R}})^{-1}$ coincides with the maximal value 
of the function~$\tilde S$ given by the rule
\begin{equation}\label{eqn9}
\tilde S(R)=
\frac{|R|^{p-1}(1+\rho)^p(\rho^{p-1}+|R|^{p-1})^{q-1}}{(|R|+\rho)^{p-1}
\Big((1+\rho|R|^{p-1})(\rho^{p-1}+|R|^{p-1})^{q-1}-\rho(1-|R|^{p-1})^q\Big)}.
\end{equation}
\begin{lem}\label{YetAnotherMonotonicity}
The function~$\tilde{S}\colon [-1,0] \to \mathbb{R}$ attains its 
maximal value at zero.
\end{lem}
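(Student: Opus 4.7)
The plan is to reproduce the strategy used in Section~\ref{s4} for the computation of $M(0)$. The right-hand side of~\eqref{eqn9} depends on $R$ only through the two quantities $t=|R|^{p-1}$ and $s=\rho(R)$, so I would introduce the two-variable function $\tilde Q(t,s)$ obtained by writing $1/\tilde S$ in terms of $(t,s)$ with $(t,s)$ treated as free parameters on, say, $[0,1]\times(0,1]$, and deduce the lemma from the chain
\[
\frac{1}{\tilde S(R)} \;=\; \tilde Q\bigl(|R|^{p-1},\rho(R)\bigr) \;\ge\; \tilde Q\bigl(0,\rho(R)\bigr) \;\ge\; \tilde Q(0,s_0) \;=\; \frac{1}{\tilde S(0)}.
\]
Since $\rho(R)\in[s_0,1]$ for $R\in[-1,0]$, only the restriction of $\tilde Q(0,\cdot)$ to $[s_0,1]$ is really needed for the last step.

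For the first inequality, I would fix $s$ and study the map $t\mapsto \tilde Q(t,s)$ on $[0,1]$. Clearing positive denominators reduces the task to verifying non-negativity of an auxiliary function $\tilde h(t)$ satisfying $\tilde h(0)=\tilde h'(0)=0$; then, exactly as for the function $h_2$ appearing around~\eqref{eqh2}, the result follows once one checks $\tilde h''(t)\ge 0$ on $[0,1]$ via Young's inequality. For the second inequality, a Taylor expansion of~\eqref{eqn9} at $t=0$ (using $(p-1)(q-1)=1$ and expanding $(s^{p-1}+t)^{q-1}$ and $(1-t)^q$ to first order in $t$) yields the clean formula
\[
\tilde Q(0,s) \;=\; \frac{(q-1)+qs^{p-1}+s^p}{(1+s)^p},
\]
and direct differentiation shows that the unique critical point of $\tilde Q(0,\cdot)$ on $(0,1)$ is the point $s=s_0$ satisfying $(p-1)s^{p-2}+(p-2)s^{p-1}=1$, with the derivative changing sign from negative to positive at $s_0$. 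Simplifying $\tilde Q(0,s_0)$ by means of~\eqref{eqs0}, as in the closing lines of Section~\ref{s4}, produces $\tilde Q(0,s_0) = (s_0/(1+s_0))^{p-2}$, whence $\tilde S(0)=((1+s_0)/s_0)^{p-2}$ and $d_{p,p}^{\star,\mathbb{R}}=(s_0/(1+s_0))^{p-2}$.

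The main obstacle is expected to be the convexity step in the $t$-direction: the extra subtracted term $-s(1-t)^q$ in the denominator of~\eqref{eqn9} has no counterpart in the function $Q(t,s)$ of Section~\ref{s4}, so $\tilde h''(t)$ will contain additional terms and the Young-type bound on them is correspondingly more delicate, likely requiring a combination of several Young estimates together with the monotonicity of $s\mapsto s^{p-1}$ on $[s_0,1]$. The final algebraic simplification of $\tilde Q(0,s_0)$ via~\eqref{eqs0}, though routine, mirrors exactly the cancellation pattern observed at the end of Section~\ref{s4}, and so can be expected to fall out with minimal extra work once the convexity argument is in place.
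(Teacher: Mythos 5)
Your overall skeleton matches the paper's: rewrite $1/\tilde S$ as a function $\tilde Q$ of the two quantities $t=|R|^{p-1}$ and $s=\rho(R)$, show that the value can only decrease when the first variable is sent to $0$, and then minimize the one-variable function $\tilde Q(0,s)=\bigl((q-1)+qs^{p-1}+s^p\bigr)(1+s)^{-p}$ over $s\in[s_0,1]$. Your treatment of this last step (unique critical point at $s_0$ determined by \eqref{eqs0}, sign change of the derivative from negative to positive, simplification of $\tilde Q(0,s_0)$ to $(s_0/(1+s_0))^{p-2}$) coincides with the paper's and is correct.

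The gap is in the first inequality, $\tilde Q(t,s)\ge\tilde Q(0,s)$, which is the heart of the lemma, and the method you propose for it cannot work. After clearing the denominators $t(1+s)^p(s^{p-1}+t)^{q-1}$ (none of which vanish at $t=1$), the auxiliary function $\tilde h$ contains the term $-s(1-t)^{q}(t+s^{p-1})^{2-q}$ coming from $-\rho(1-|R|^{p-1})^q$ in \eqref{eqn9}. Since $1<q<2$, its second derivative in $t$ contains the summand $-sq(q-1)(1-t)^{q-2}(t+s^{p-1})^{2-q}$, which tends to $-\infty$ as $t\to1^-$ while every competing term stays bounded. Hence $\tilde h''<0$ near $t=1$ for every $s>0$: the function is simply not convex in $t$, and no combination of Young's inequalities will rescue the $h_2$-style argument of Section~\ref{s4}. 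The paper's proof of this step proceeds by a structurally different route: it first discards the factor $(\rho+|R|)^{p-1}/(\rho^{p-1}+|R|^{p-1})\ge1$ and works with the smaller function $S(u,t)$, then proves the \emph{monotonicity} $\partial S/\partial u\ge0$ rather than any convexity in $u$; after the substitution $v=t^{p-1}$ this reduces to the positivity of an auxiliary function $F(u,v)$, which is established by showing that $F$ is convex in $v$, that $\partial F/\partial v(u,1)\le0$ via a mixed-partial computation, and finally that $F(u,1)\ge0$ by a further monotonicity argument. Some replacement of comparable depth is required where your proposal says the Young-type bounds are merely ``more delicate.''
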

\begin{proof}
Consider the function~$S$: 
\begin{equation*}
S(u,t) = \frac{(1+ut)(u+t^{p-1}) - t(1-u)^q (u+t^{p-1})^{2-q}}{u(1+t)^p}, 
\quad (u,t) \in [0,1]^2.
\end{equation*}
We will show~$S(u,t)\geq S(0,\rho(0))$ for~$u \in [0,1]$ 
and~$t \in [\rho(0),1]$. The monotonicity wanted follows from this inequality:
\begin{equation*}
\tilde S(R)^{-1} = S(|R|^{p-1}, \rho) \cdot 
\frac{(\rho+|R|)^{p-1}}{\rho^{p-1}+|R|^{p-1}}\geq 
S(|R|^{p-1},\rho) \geq S(0,\rho(0)) = \tilde S(0)^{-1}.
\end{equation*}

Let us first show that~$\frac{\partial S}{\partial u} \geq 0$. 
We compute this derivative:
\begin{multline*}
(1+t)^p u^2 \frac{\partial S}{\partial u}= 
u^2\frac{\partial}{\partial u}\Big(ut+t^{p}+1+
\frac{t^{p-1}}{u} -t \frac{(1-u)^q}{u}(u+t^{p-1})^{2-q}\Big) = \\
t(u^2-t^{p-2})+t(1-u)^{q-1}(u+t^{p-1})^{1-q}\Big(qu(u+t^{p-1})
-(2-q)u(1-u)+(1-u)(u+t^{p-1})\Big)=\\
t(u^2-t^{p-2})+t\Big(\frac{1-u}{u+t^{p-1}}
\Big)^{q-1}(u^2+u(q-1)(1+t^{p-1})+t^{p-1}).
\end{multline*}
We make the change of variable~$v=t^{p-1}$. The positivity of the 
derivative of~$S$ with respect to~$u$ is equivalent to the positivity of
\begin{equation*}
F(u,v)=(1-u)^{q-1}(u^2+u(q-1)(1+v)+v)-(u+v)^{q-1}(v^{2-q}-u^2).
\end{equation*}

We compute~$\frac{\partial^2 F}{\partial v^2}$:
\begin{multline*}
\frac{\partial^2 F}{\partial v^2} = (2-q)(q-1)(u+v)^{q-3}(v^{2-q}-u^2)
-2(q-1)(2-q)(u+v)^{q-2}v^{1-q}+(2-q)(q-1)(u+v)^{q-1}v^{-q}=
\\
=(2-q)(q-1)(u+v)^{q-3}[v^{2-q}-u^2-2(u+v)v^{1-q}+(u+v)^2v^{-q}]
=(2-q)(q-1)(u+v)^{q-3}u^2(v^{-q}-1)\geq 0.
\end{multline*}
Therefore,~$F$ is convex with respect to~$v$. We continue the computations:
\begin{multline*}
\frac{\partial F}{\partial v}(u,1) = (1-u)^{q-1}(1+(q-1)u)
-(q-1)(1+u)^{q-2}(1-u^2)-(2-q)(1+u)^{q-1} = 
\\
u(q-1)((1+u)^{q-1}+(1-u)^{q-1})-((1+u)^{q-1}-(1-u)^{q-1}).
\end{multline*}
Further,
\begin{multline*}
\frac{1}{q-1}\frac{\partial^2 F}{\partial v \partial u}(u,1) 
= ((1+u)^{q-1}+(1-u)^{q-1})+u(q-1)((1+u)^{q-2}-(1-u)^{q-2}) 
-((1+u)^{q-2}+(1-u)^{q-2}) =
\\
=uq((1+u)^{q-2}-(1-u)^{q-2})\leq 0,
\end{multline*}
which leads to 
\begin{equation*}
\frac{\partial F}{\partial v}(u,1) 
\leq \frac{\partial F}{\partial v}(0,1) = 0.
\end{equation*}
Since~$F$ is convex with respect to~$v$, we 
have~$\frac{\partial F}{\partial v} \leq 0$. Consequently, 
\begin{multline*}
F(u,v)\geq F(u,1) = (1-u)^{q-1}(1+2(q-1)u+u^2)-(1+u)^{q-1}(1-u^2)=
\\ 
(1-u)\Big[ 2qu (1-u)^{q-2} +(1-u)^{q}-(1+u)^q\Big].
\end{multline*}
The expression in the brackets is non-negative since it is equal to 
zero at~$u=0$ and does not decrease with respect to~$u$:
\begin{multline*}
\frac{1}{q}\frac{\partial}{\partial u}\Big[ 
2qu (1-u)^{q-2} +(1-u)^{q}-(1+u)^q\Big]= 
2(1-u)^{q-2}+2u(2-q)(1-u)^{q-3}-(1-u)^{q-1}-(1+u)^{q-1} =\\
=(1+u)\Big((1-u)^{q-2} - (1+u)^{q-2}\Big) + 2u(2-q)(1-u)^{q-3}\geq 0.
\end{multline*}

Thus, we have proved~$F(u,v)\geq 0$. This leads to the 
inequality~$\frac{\partial S}{\partial u} \geq 0$. Consequently,
\begin{equation*}
S(u,t) \geq S(0,t) = (1+t)^{-p}(q-1+t^{p}+qt^{p-1}).
\end{equation*}

We compute the derivative of~$S$ with respect to~$t$:
\begin{multline*}
(1+t)^{p+1}\frac{\partial S}{\partial t}(0,t) 
= -p(q-1+t^{p}+qt^{p-1})+p(1+t)(t^{p-1}+t^{p-2}) = p(t^{p-2}+(2-q)t^{p-1}-(q-1)).
\end{multline*}
This expression increases and equals to zero at~$t=\rho(0)$ 
(see~\eqref{eqs0}). Therefore,~$S(0,t)\geq S(0,\rho(0))$ 
when~$t\geq \rho(0)$. This means we have proved that for any~$u \in [0,1]$ 
and any~$t \in [\rho(0),1]$ we have~$S(u,t) \geq S(0,\rho(0))$. 
\end{proof}

Lemma~\ref{YetAnotherMonotonicity} and the considerations before 
it lead to the proof of Theorem~\ref{Dthm} in the real-valued 
case (recall that~$s_0 = \rho(0)$):
\begin{equation*}
d_{p,p}^{\star,\mathbb{R}} = \tilde{S}(0)^{-1} = S(0,s_0) 
= \Big(\frac{s_0}{1+s_0}\Big)^{p-2}.
\end{equation*}

\section{Proof of Proposition~\ref{Complexd}}\label{s8}
We consider the case~$\p > 2$, the case~$\p < 2$ is similar. We use 
the notation~$p = \p$ and~$q = \frac{p}{p-1}$. 
Consider yet another Bellman function
\begin{equation*}
\BelldC(z_1,x_3,x_4,z_5) = 
\sup\Big\{\re(\av{g}{I})\,\Big|\;\av{f}{I}=z_1, \av{|f|^p}{I}=x_3, 
\av{|g|^q}{I}=x_4,\av{f\bar{g}}{I}=z_5\Big\}.
\end{equation*}
The natural domain of~$\BelldC$ is
\begin{equation*}
\OmegadC = \Big\{(z_1,x_3,x_4,z_5)\in 
\mathbb{C}\times\mathbb{R}^2\times\mathbb{C}\,\Big|\; 
0 \leq x_4, \, |z_1|^p\leq x_3, \, |x_5|\leq x_3^{1/p}x_4^{1/q}\Big\}.
\end{equation*}
As usual,~$\BelldC$ is minimal among concave functions 
on~$\OmegadC$ that satisfy the boundary conditions:
\begin{equation*}
\begin{aligned}
\BelldC(z_1,|z_1|^p,|z_2|^q,z_1z_2)=\re{z_2},\quad &z_1 \ne 0,\\
\BelldC(0,0,x_4,0) = x_4^{\frac1q},\quad &x_4 \geq 0.
\end{aligned}
\end{equation*}
Similar to Proposition~\ref{DFromBell},
\begin{equation*}
d_{p,r}^{\star} = \bigg(\sup_{|z_1| \in (0,1)} 
\frac{(\BelldC(z_1,1,1,0))^r}{1 - |z_1|^{\frac{qr}{p}}}\bigg)^{-1} = 
\bigg(\sup_{x_1 \in (0,1)} 
\frac{(\BelldC(x_1,1,1,0))^r}{1 - x_1^{\frac{qr}{p}}}\bigg)^{-1},
\end{equation*}
the latter identity follows from homogeneity.
Similar to Section~\ref{s5}, we will prove that
\begin{equation*}
\BelldC(x) = \BelldPlus(x), \quad x\in \Omegad.
\end{equation*}
The inequality~$\BelldC(x) \geq \BelldPlus(x)$ is evident.
Thus, it suffices to show that the function
\begin{equation*}
(z_1,x_3,x_4, z_5) \mapsto \BelldPlus(\re(z_1),x_3,x_4,\re(z_5))
\end{equation*}
majorizes~$\BelldC$ on $\OmegadC$. The said function is concave, 
which allows to verify the majorization property on the skeleton 
of~$\OmegadC$ only. This can be rewritten as
\begin{equation*}
\re(z_5/z_1)\leq \BelldPlus(\re(z_1),|z_1|^p,|z_5|^q|z_1|^{-q},\re(z_5)), 
\qquad z_1 \ne 0.
\end{equation*}
For~$\re(z_1)=x_1, \re(z_5)=x_5$,~$|z_1|$, and~$|z_5|$ fixed, the 
expression~$\re(z_5/z_1)$ attains the maximal value
\begin{equation*}
\frac{x_1x_5+\sqrt{(|z_1|^2-x_1^2)(|z_5|^2-x_5^2)}}{|z_1|^2}.
\end{equation*}
Thus, it remains to prove
\begin{equation}\label{eqn7}
\BelldPlus(x_1,x_3,x_4,x_5) \geq 
\frac{x_1x_5+\sqrt{(x_3^{2/p}-x_1^2)(x_4^{2/q}x_3^{2/p}-x_5^2)}}{x_3^{2/p}}, 
\quad x_3 \ne 0.
\end{equation}

\begin{lem}\label{lem67}
For any $x \in \Omega_d$ 
\begin{equation}\label{eqn8}
(\Bell_{d,\pm}(x) x_3^{2/p}-x_1x_5)^2 \geq (x_3^{2/p}-x_1^2)(x_4^{2/q}x_3^{2/p}-x_5^2).
\end{equation}
\end{lem}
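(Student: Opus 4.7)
The plan is to verify \eqref{eqn8} chord-by-chord using the foliation of $\Bell_{d,+}$ established in Theorem \ref{BellmanTheorem2}, in direct parallel with the argument of Section \ref{s5}; the $\Bell_{d,-}$ case follows by the same analysis applied to the companion foliation with $a_2<0$. Both sides of \eqref{eqn8} are jointly homogeneous under the scaling of Lemma \ref{SimpleProperties} (each side picks up a factor $\lambda_1^4\lambda_2^2$), so it suffices to check the inequality on the chords $\ell_d(1,1,R)$, $R\in[-1,1]$. On such a chord, writing $x(\tau)=\tau a+(1-\tau)b$ with $a=(1,1,1,1)$ and $b=(-\rho,\rho^p,|R|^p,\rho\varphi)$ where $\rho=\rho(R)$, $\varphi=\varphi(R)$, the four coordinates of $x$ are affine in $\tau$ and $\Bell_{d,+}(x(\tau))=\tau-(1-\tau)\varphi$.

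Next, I would dispose of the trivial cases. Both endpoints $x(0)$ and $x(1)$ lie on $\FixedBoundary\Omegad$, so $|x_1|=x_3^{1/p}$ and $|x_5|=x_3^{1/p}x_4^{1/q}$ hold at $\tau\in\{0,1\}$; the two factors on the right-hand side of \eqref{eqn8} then vanish, as does the left-hand side (because on the skeleton $\Bell_{d,+}x_3^{2/p}-x_1x_5=x_2(x_3^{2/p}-x_1^2)$). For $R\in[R_0,1]$, where $\rho(R)=R$ and $\varphi(R)=R^{p-1}$, a direct substitution gives $x_3(\tau)=x_4(\tau)=x_5(\tau)=\tau+(1-\tau)R^p$ identically in $\tau$; using $\frac{2}{p}+\frac{2}{q}=2$ this forces $x_4^{2/q}x_3^{2/p}-x_5^2\equiv 0$, so \eqref{eqn8} becomes the trivial statement that a square is non-negative.

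The core of the proof is the regime $R\in[-1,R_0)$. Here I would mirror the two-step reduction of the proof of Proposition \ref{Complexc} carried out in Section \ref{s5}. After clearing denominators and factoring out the common $\tau^2(1-\tau)^2$ vanishing at the chord endpoints, the desired estimate should assume the form $\Theta_1(R,\tau)\Theta_2(R,\tau)\leq\Xi(\tau,\rho(R))$, where $\Theta_1$ depends on $R$ only through $\rho(R)$ (so $\Theta_1(R,\tau)=\Theta_1(\rho(R),\tau)$ thanks to Definition \ref{Rho}), while $\Theta_2$ admits a monotonicity bound $\Theta_2(R,\tau)\leq\Theta_2(\rho(R),\tau)$. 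Combining these two facts would reduce the inequality for $R\in[-1,R_0)$ to its counterpart at $R=\rho(R)\in[R_0,1]$, already handled in the previous paragraph.

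The main obstacle is identifying the correct split into $\Theta_1$ and $\Theta_2$: the left-hand side of \eqref{eqn8} entangles the dependence on $|R|^p$ (carried by $x_4$) with the dependence on $\rho^p$ (carried by $x_3$), and disentangling them cleanly requires heavy use of the identity $\lambda(R)=\lambda(\rho(R))$ together with the monotonicity properties of $\lambda,\rho,\varphi$ from Lemma \ref{RealR0}. Once the split is in place, the required monotonicity of $\Theta_2$ should reduce, after a further algebraic step reminiscent of the comparison of $\frac{1+\varphi(R)}{1-|R|^p}$ with $\frac{1+\rho(R)^{p-1}}{1-\rho(R)^p}$ in Section \ref{s5}, to an elementary one-variable inequality verified through the sign of the relevant derivative.
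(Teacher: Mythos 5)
Your reduction to the chords of the $\BelldPlus$-foliation, the homogeneity count, and the disposal of the endpoint and $R\in[R_0,1]$ cases are all sound, but the proposal stops precisely where the proof has to happen: for $R\in[-1,R_0)$ you only assert that a factorization $\Theta_1\Theta_2\le\Xi$ with the right $\rho$-invariance and monotonicity ``should'' exist, and you explicitly flag producing it as an unresolved obstacle. As written, the main case is a plan rather than an argument, so the proof is incomplete.

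The missing step dissolves once you exploit an algebraic identity instead of the foliation. Put $b=\Bell_{d,\pm}(x)$; if $x_3=0$ then $x_1=x_5=0$ and \eqref{eqn8} is trivial, and otherwise dividing \eqref{eqn8} by $x_3^{2/p}$ and regrouping shows that it is \emph{equivalent} to
\begin{equation*}
(x_5-x_1b)^2\;\geq\;\big(x_3^{2/p}-x_1^2\big)\big(x_4^{2/q}-b^2\big).
\end{equation*}
By Corollary~\ref{GeneralConvex2}, the point $(x_1,b,x_3,x_4,x_5)$ lies on $\partial\mathbb{K}$, so (away from the degenerate faces of Corollary~\ref{DescriptionOfConvexHull}, where $x_3^{2/p}-x_1^2$ or $x_4^{2/q}-b^2$ vanishes) $x_5$ equals $\BellcPlus(x_1,b,x_3,x_4)$ or $\BellcMinus(x_1,b,x_3,x_4)$, and the displayed inequality is exactly \eqref{eq2}, respectively its mirror under \eqref{SymmetryFormula} --- both already proved in Section~\ref{s5}. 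This is the paper's proof, and it also reveals that the split you were looking for is nothing new: the $d$-chord $\ell_d(1,1,R)$ together with its Bellman value is, up to the sign change $(x_1,x_2)\mapsto(-x_1,-x_2)$, the $c$-chord through $(-1,-1,1,1)$ with $x_5=\BellcPlus$, so your $\Theta_1\Theta_2\le\Xi$ would have to be the inequality $S_1S_2\le\tau^2(1-\tau)^2$ of Section~\ref{s5} transported to the $d$-picture. Your route can therefore be completed, but only by rediscovering that computation; the efficient move is to quote it via the equivalence above.
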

\begin{proof}
Let us temporarily use the notation~$b = \Bell_{d,\pm}$. With this 
notation,~\eqref{eqn8} turns into
\begin{equation*}
b^2 x_3^{2/p}- 2x_1x_5b \geq x_4^{2/q}x_3^{2/p} - x_4^{2/q}x_1^2 -x_5^2,
\end{equation*}
which is equivalent to
\begin{equation}\label{Turn}
(x_5-x_1b)^2\geq (x_3^{2/p}-x_1^2)(x_4^{2/q}-b^2).
\end{equation}
The inequality~\eqref{eq2}, together with~\eqref{SymmetryFormula}, leads to
\begin{equation*}
\BellcMinus(x_1,x_2,x_3,x_4) \leq x_1x_2 - 
\sqrt{(x_3^{2/p}-x_1^2)(x_4^{2/q}-x_2^2)}.
\end{equation*}
Thus, the inequality 
\begin{equation*}
(x_5-x_1x_2)^2 \geq (x_3^{2/p}-x_1^2)(x_4^{2/q}-x_2^2)
\end{equation*}
holds for any point~$x= (x_1,x_2,x_3,x_4,x_5)$ that lies on the union 
of the graphs of $\BellcPlus$ and $\BellcMinus$, which is the 
same as~\eqref{Turn}.
\end{proof}

Let us consider two functions 
\begin{equation*}
G^-(x) = \frac{x_1x_5-
\sqrt{(x_3^{2/p}-x_1^2)(x_4^{2/q}x_3^{2/p}-x_5^2)}}{x_3^{2/p}}, \qquad
G^+(x) = \frac{x_1x_5+\sqrt{(x_3^{2/p}-x_1^2)(x_4^{2/q}x_3^{2/p}-x_5^2)}}{x_3^{2/p}}
\end{equation*}
defined on the interior of~$\Omegad$. The inequality~\eqref{eqn7} 
follows from~$\BelldMinus \leq G^-< G^+\leq \BelldPlus$.
Lemma~\ref{lem67} says that $\Bell_{d,\pm}(x) \notin (G^-(x),G^+(x))$ 
for any~$x \in \Omegad$ such that~$x_3 \ne 0$. The interior of~$\Omegad$ 
is connected, the functions~$\Bell_{d,\pm}$ and $G^\pm$ are continuous 
on it, and moreover,~$G^+>G^-$. Therefore, either~$\BelldPlus\geq G^+$, 
or~$\BelldPlus\leq G^-$ everywhere. It remains to notice
\begin{equation*}
\BelldPlus(0,0,1,1)\geq \BelldMinus(0,0,1,1)=-\BelldPlus(0,0,1,1),
\end{equation*}
which shows,~$\BelldPlus(0,0,1,1)\geq 0> G^-(0,0,1,1)$. 
Thus,~$\BelldPlus^+\geq G^+$ and~\eqref{eqn7} is proved.

\section{The computation of~$c_{q,r}^{\star}$ 
and~$d_{q,r}^{\star}$ for~$q < 2$}
\label{s9}

\subsection{The computation of $c_{q,r}^{\star}$}

Similar to Proposition~\ref{CFromBell},
\begin{equation}\label{Supremum}
c_{q,r}^{\star,\mathbb{R}} = 
\bigg(\sup_{x_2 \in (-1,1)} 
\frac{\BellcPlus^r(0,x_2,1,1)}{1 - |x_2|^{r}}\bigg)^{-1}=
\bigg(\sup_{|x_2|^q < x_4 } 
\frac{\BellcPlus^r(0,x_2,x_3,x_4)}{(x_4^{r/q} - |x_2|^{r})x_3^{r/p}}\bigg)^{-1}.
\end{equation}

On each segment~$\ell_c(a_1,a_2,R)$, we find a 
point~$x=\tau a + (1-\tau)b$ with~$x_1 = 0$. In other 
words,~$(\tau - (1-\tau)\rho)=0$, i.e.~$\tau = \frac{\rho}{1+\rho}$. 
Here are the other coordinates of~$x$ and the value of~$\BellcPlus$ there: 
\begin{align*}
x_2 &= a_2 \frac{\rho-\phi}{1+\rho},& x_3 &
= |a_1|^p \frac{\rho+\rho^p}{1+\rho}\\
x_4 &= |a_2|^q \frac{\rho+|R|^p}{1+\rho},& \BellcPlus(x) &= a_1a_2\frac{\rho+\rho\phi}{1+\rho}. 
\end{align*}
We plug these values back into~\eqref{Supremum}:
\begin{equation*}
\frac{1}{c_{q,r}^{\star,\mathbb{R}}} = 
\sup_{R \in (-1,1)} 
\frac{\rho^r (1+\phi)^r}{(1+\rho)^{r/q}(\rho+\rho^p)^{r/p}
\Big[\big(\frac{\rho+|R|^p}{1+\rho}\big)^{r/q}- 
\big|\frac{\rho-\phi}{1+\rho}\big|^r\Big]}.
\end{equation*}

By Lemma~\ref{rlessthan2}, the latter supremum equals~$+\infty$ provided~$r<2$. 

We claim without proof that the point~$R=-1$ is the global maximum 
provided~$r=2$. The limiting value at this point is~$(p-1)$. 
Thus,~$c_{q,2}^{\star,\mathbb{R}}=(q-1)$. 

We also claim that~$0$ is the absolute maximum when~$r=p$. In this case,
\begin{equation*}
c_{q,p}^{\star,\mathbb{R}} = \frac{1+\rho(0)^{p-1}}{1+\rho(0)}.
\end{equation*}

\newcommand{\tbdp}{\tilde\Bell_{d,+}}
\newcommand{\tbdm}{\tilde\Bell_{d,-}}
\newcommand{\tod}{\tilde\Omega_d}

\subsection{Proof of Theorem~\ref{Elementary}}
\begin{prop}
For any~$p > 2$ we have~$d_{p,2(p-1)}^{\star} = 1$.
\end{prop}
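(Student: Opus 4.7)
By Proposition~\ref{Complexd}, the real and complex constants coincide, so it suffices to establish $d_{p,2(p-1)}^{\star,\mathbb{R}} = 1$. Since $\theta' r/\theta = q\cdot 2(p-1)/p = 2$ when $\theta=p$ and $r = 2(p-1)$, Proposition~\ref{DFromBell} converts the target into the pointwise estimate
\begin{equation*}
\BelldPlus(x_1,1,1,0)^{2(p-1)} + x_1^2 \le 1, \qquad x_1 \in (-1,1).
\end{equation*}

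By Theorem~\ref{BellmanTheorem2} and the parametrization derived in the opening of Section~\ref{s7}, every such point lies on a unique chord $\ell_d(a_1,a_2,R)$ with $a_2>0$ and $R\in[-1,0]$, the parameters $a_1,a_2$ being fixed by $x_3=x_4=1$. Setting $b = |R|$, $\beta = b^{p-1}$, $a = \rho(R)$, and $\kappa = 2(p-1)/p$, and using $\varphi(R) = -\beta$ on $R\in[-1,0]$, the explicit formulas of Section~\ref{s7} yield
\begin{equation*}
x_1^2 = \frac{(1-\beta)^2\,a^{\kappa}}{(1+a\beta)^{\kappa}(a^{p-1}+\beta)^{2/p}}, \qquad \BelldPlus(x_1,1,1,0)^{2(p-1)} = \frac{(1+a)^{2(p-1)}\beta^{\kappa}}{(1+a\beta)^{\kappa}(b+a)^{(p-1)\kappa}}.
\end{equation*}
Multiplying through by $(1+a\beta)^{\kappa}$, the goal reduces to the one-parameter algebraic inequality
\begin{equation*}
(1-\beta)^2 a^{\kappa}(a^{p-1}+\beta)^{-2/p} + (1+a)^{2(p-1)}\beta^{\kappa}(b+a)^{-(p-1)\kappa} \le (1+a\beta)^{\kappa}. \tag{$\star$}
\end{equation*}

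Direct substitution shows that $(\star)$ is saturated at both endpoints of the $R$-range: at $R = -1$ (with $a=b=\beta=1$) both sides equal $2^{\kappa}$, while in the limit $R\to 0^-$ (with $\beta\to 0$ and $a\to s_0 = \rho(0)$, via \eqref{eqs0}) both sides equal $1$. This double saturation already confirms that the constant $1$ cannot be improved; what remains, and forms the main obstacle, is to verify $(\star)$ on the intermediate range $R\in(-1,0)$, where $a$ and $b$ are linked through the nonlinear relation $\lambda(a) = \lambda(-b)$ of Definition~\ref{Rho}. My plan is to follow the monotonicity template of Lemma~\ref{YetAnotherMonotonicity} and the algebraic manipulations of Section~\ref{s3}: use the identities \eqref{eq14}, \eqref{eq15}, and \eqref{rho'} to rewrite $1+a\beta$, $1-\beta$, $1+a$, and $a^{p-1}+\beta$ as rational functions of $R$ and the common value $\lambda = \lambda(R)$, and then show the resulting one-variable gap $(1+a\beta)^{\kappa} - \mathrm{LHS}(\star)$ is nonnegative on $[-1,0]$. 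The presence of a double zero at the two endpoints strongly suggests that after this substitution the gap factors into a manifestly nonnegative product, analogous to the factorization producing \eqref{Delta2}; pinning down this factorization is the crux. Once $(\star)$ is established, Proposition~\ref{Complexd} transfers the conclusion to complex-valued functions and the proposition follows.
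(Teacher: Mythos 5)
Your reduction is set up correctly: Proposition~\ref{Complexd} legitimately reduces to the real case, the exponent $\theta'r/\theta=2$ is computed correctly, and your parametrization of the points $(x_1,1,1,0)$ via the chords of Section~\ref{s7} (with $x_3=x_4=1$ instead of the paper's normalization) reproduces the right expressions for $x_1^2$ and $\BelldPlus^{2(p-1)}$; the endpoint saturations of $(\star)$ at $R=-1$ and $R\to0^-$ also check out. But the proof is not complete. The endpoint checks only re-establish the trivial upper bound $d^{\star}_{p,2(p-1)}\le 1$ (already known from Section~1.3); the entire content of the proposition is the validity of $(\star)$ on the open range $R\in(-1,0)$, and for that you offer only a ``plan'' --- you yourself identify the factorization of the gap as ``the crux'' and do not carry it out. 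The two variables $a=\rho(R)$ and $b=|R|$ are coupled through the transcendental relation $\lambda(\rho(R))=\lambda(R)$, so $(\star)$ is not a free two-parameter inequality like the ones handled by $Q(t,s)$ and $S(u,t)$ in Sections~\ref{s4} and~\ref{s7}; there is no guarantee that the monotonicity template transfers, and a double zero at both endpoints makes a naive monotonicity argument impossible (the gap must vanish to high order at both ends). As it stands, the hardest step is missing.

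You should also know that the paper's proof is entirely different and far more elementary: it never touches the Bellman machinery. One considers $F(t)=\|f+te\|_{L^p}^{2p-2}-\big|\langle \Nop_p(f+te),e\rangle\big|^2$ and computes
\begin{equation*}
F'(t)=2(p-1)\,\langle \Nop_p(f+te),e\rangle\Big(\|f+te\|_{L^p}^{p-2}-\int|f+te|^{p-2}e^2\Big),
\end{equation*}
where the second factor is nonnegative by H\"older and the first factor is increasing in $t$ with unique root at the minimizing $t=\alpha^{\star}$. Hence $F$ is minimized exactly where the pairing vanishes, $F(\alpha^{\star})=\|f+\alpha^{\star}e\|_{L^p}^{2p-2}$, and $F(0)\ge F(\alpha^{\star})$ is the claim; the complex case is handled directly by running the same argument with $\re\langle\Nop_p(f+te),e\rangle$, with no appeal to Proposition~\ref{Complexd}. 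I would recommend abandoning the Bellman-function route here (it is not wrong, but it leaves you with an unresolved two-variable inequality constrained by $\lambda(a)=\lambda(-b)$) in favor of this one-line variational argument.
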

\begin{proof}
Let us first prove that~$d_{p,2(p-1)}^{\star,\mathbb{R}} = 1$. In other words, 
we are going to prove the inequality
\begin{equation*}
\big|\langle \Nop_p(f),e\rangle\big|^2+ 
\inf_{\alpha}\|f+\alpha e\|_{L^p}^{2p-2} \leq \|f\|_{L^p}^{2p-2}
\end{equation*}
for real-valued functions~$f$ and~$e$, with the 
assumption~$\|e\|_{L^p}=1$. Let the infimum be attained 
at~$\alpha=\alpha^{\star}$.
 
We consider the function~$F\colon\mathbb{R}\to\mathbb{R}$ given by the rule
\begin{equation}\label{Real-Valued}
F(t) = \|f+te\|_{L^p}^{2p-2}-\big|\langle \Nop_p(f+te),e\rangle\big|^2
\end{equation}
and compute its derivative:
\begin{equation*}
F'(t) = 2(p-1) \langle \Nop_p(f+te),e\rangle \Big(\|f+te\|_{L^p}^{p-2} 
- \int |f+te|^{p-2}e^2 \Big).
\end{equation*}
The expression in the parentheses is non-negative by H\"older's inequality. 
Moreover, the function 
\begin{equation*}
t\mapsto \langle \Nop_p(f+te),e\rangle
\end{equation*}
has positive derivative~$(p-1)\int |f+te|^{p-2}e^2$, in particular, 
this function has no more than one root. Thus, the value~$t=\alpha^{\star}$ 
is its unique root. The function~$F$ attains its minimal value 
at~$t=\alpha^{\star}$. What is more,
\begin{equation*}
F(\alpha^{\star}) = \|f+\alpha^{\star} e\|_{L^p}^{2p-2},
\end{equation*}
which makes the inequality~\eqref{Real-Valued} equivalent 
to~$F(0)\geq F(\alpha^{\star})$.

We return to the complex-valued case. It suffices to prove a 
slightly weaker inequality
\begin{equation*}
\big|\re\langle \Nop_p(f),e\rangle\big|^2+ 
\inf_{\alpha}\|f+\alpha e\|_{L^p}^{2p-2} \leq \|f\|_{L^p}^{2p-2}, 
\quad \|e\|_{L^p} = 1.
\end{equation*}
The inequality~\eqref{eq-Hold4} (with~$r=2(p-1)$ and~$d_{p,r}=1$) 
follows if one multiplies~$e$ by suitable scalar to make the scalar 
product~$\langle \Nop_p(f),e\rangle$ real. We slightly strengthen our 
inequality:
\begin{equation*}
\big|\re\langle \Nop_p(f),e\rangle\big|^2+ 
\inf_{t\in\mathbb{R}}\|f+t e\|_{L^p}^{2p-2} \leq 
\|f\|_{L^p}^{2p-2}, \quad \|e\|_{L^p} = 1.
\end{equation*}
This inequality can be proved with the help of a modified functions~$F$,
\begin{equation*}
F(t) = \|f+te\|_{L^p}^{2p-2}-\big|\re\langle \Nop_p(f+te),e\rangle\big|^2,
\end{equation*}
exactly the same way as~\eqref{Real-Valued}.
\end{proof}

\begin{lem}\label{Duality}
For dual exponents $p$ and $q$, the identities~$d_{p,r(p-1)}^{\star} = 1$ 
and~$d_{q,r}^{\star} = 1$ are equivalent.
\end{lem}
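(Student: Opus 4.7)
The plan is to normalize both conditions via homogeneity, expose the symmetric algebraic structure shared by the two inequalities, and construct an explicit bijection between admissible pairs $(f,e)\in L^q\times L^q$ and $(F,E)\in L^p\times L^p$ via the nonlinear duality $\Nop_p\circ\Nop_q=\mathrm{id}$. After normalizing $\|e\|_{L^q}=\|f\|_{L^q}=1$ (resp.\ $\|E\|_{L^p}=\|F\|_{L^p}=1$), the condition $d_{q,r}^\star=1$ becomes
$$|\langle \Nop_q(f),e\rangle|^{r(p-1)}+\inf_\alpha\|f-\alpha e\|_{L^q}^r\le 1,$$
while $d_{p,r(p-1)}^\star=1$ becomes
$$|\langle \Nop_p(F),E\rangle|^r+\inf_\alpha\|F-\alpha E\|_{L^p}^{r(p-1)}\le 1.$$
Both inequalities have the common shape $X^{r(p-1)}+Y^r\le 1$, but with the Hölder-type term and the projection-distance term appearing in swapped roles.

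The natural attempted substitution is $F=\Nop_q(f)$, which gives $f=\Nop_p(F)$ and $\|F\|_{L^p}=\|f\|_{L^q}^{q-1}$; in particular $\|F\|_{L^p}^{r(p-1)}=\|f\|_{L^q}^r$, so the right-hand sides match. Moreover $\langle \Nop_q(f),e\rangle=\langle F,e\rangle$, making the Hölder pairing natural to transport. However, this substitution does not send the Hölder term of one inequality directly to the Hölder term of the other (the exponents $r$ and $r(p-1)$ disagree); instead the correspondence must interchange the Hölder term with the projection term, reflecting the swap in exponents noted above.

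The key device for the interchange is the optimality characterization: the minimizer $\alpha^*$ of $\|f-\alpha e\|_{L^q}$ is determined by $\langle \Nop_q(f-\alpha^*e),e\rangle=0$, and analogously in $L^p$. Given $(f,e)$, I would let $g=f-\alpha^* e$ be the $L^q$-residual and define $F,E\in L^p$ from the pair $(g,e)$ via $\Nop_q$, normalized so that the $L^p$-orthogonality relation $\langle \Nop_p(F-\alpha^{**}E),E\rangle=0$ holds with an explicit $\alpha^{**}$. Under this correspondence the $L^q$-projection distance $\|g\|_{L^q}$ is translated, via $\|\Nop_q(g)\|_{L^p}=\|g\|_{L^q}^{q-1}$, into the $L^p$-Hölder pairing of the transformed pair, and conversely the $L^q$-Hölder pairing $\langle \Nop_q(f),e\rangle$ becomes (after suitable normalization) the $L^p$-projection distance. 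This yields a bijective correspondence under which the condition $X^{r(p-1)}+Y^r\le 1$ for $L^q$-pairs becomes the condition $Y'^r+X'^{r(p-1)}\le 1$ for $L^p$-pairs, with $(X,Y)=(Y',X')$.

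The main obstacle is verifying the correspondence cleanly, i.e.\ that the nonlinear substitution actually realizes the exchange of Hölder and projection roles with the correct normalization constants and produces, for every pair in one side, a genuine pair on the other (not merely a formal match of two terms). An alternative but essentially equivalent route, which I would also check, is to go through the Bellman function reformulation: by (the $\theta=q$ analog of) Proposition~\ref{DFromBell}, the two conditions translate to $\BelldPlus^{(q)}(x_1,1,1,0)^r+|x_1|^{r(p-1)}\le 1$ and $\BelldPlus^{(p)}(x_1,1,1,0)^{r(p-1)}+|x_1|^r\le 1$, which describe the same planar graph with axes swapped; by Corollary~\ref{GeneralConvex2} this corresponds to the obvious symmetry of the convex hull $\mathbb{K}$ under the permutation $(x_1,x_2,x_3,x_4,x_5)\mapsto(x_2,x_1,x_4,x_3,x_5)$ combined with $p\leftrightarrow q$. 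Finally, passage from the real-valued to the complex-valued statement follows by the same strategy used in Sections~\ref{s5} and~\ref{s8}, applied to the relevant range of exponents.
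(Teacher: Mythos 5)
Your instinct about the algebraic structure is right: the substitution $F=\Nop_p(f)$ (equivalently $f=\Nop_q(F)$), together with $\|F\|_{L^q}^{r}=\|f\|_{L^p}^{r(p-1)}$, is exactly the change of variables the paper uses, and the two inequalities do differ by exchanging the roles of the H\"older term and the projection term. But the mechanism you propose for realizing that exchange has a genuine gap. You try to build a pointwise correspondence between pairs by passing the residual $g=f-\alpha^{\star}e$ through $\Nop_q$ and invoking the orthogonality $\langle\Nop_q(f-\alpha^{\star}e),e\rangle=0$. This does not go through: $\Nop_q$ is nonlinear, so $\inf_\alpha\|\Nop_q(g)-\alpha E\|_{L^p}$ bears no useful relation to $\|\Nop_q(g-\alpha e)\|_{L^p}=\|g-\alpha e\|_{L^q}^{q-1}$, and the identity $\|\Nop_q(g)\|_{L^p}=\|g\|_{L^q}^{q-1}$ only converts the $L^q$-distance into an $L^p$-\emph{norm}, not into an $L^p$-H\"older pairing against a second, fixed function $E$. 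The missing idea is to \emph{linearize the projection term before substituting}: by the Hahn--Banach theorem, $\inf_\alpha\|f-\alpha e\|_{L^\theta}=\sup\{|\langle f,g\rangle| : \|g\|_{L^{\theta'}}=1,\ \langle e,g\rangle=0\}$. After this dualization, $d_{p,r(p-1)}^{\star}=1$ becomes the statement that $|\langle F,e\rangle|^{r}+|\langle\Nop_q(F),g\rangle|^{r(p-1)}\le\|F\|_{L^q}^{r}$ for all triples with $\|e\|_{L^p}=\|g\|_{L^q}=1$ and $\langle e,g\rangle=0$, which is symmetric: undoing the dualization with the roles of $e$ and $g$ exchanged gives exactly $d_{q,r}^{\star}=1$. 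No matching of individual extremizers is needed. This is the paper's two-line proof, and it also handles the complex case with no extra work.

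Your fallback Bellman route can be made to work but is not carried out and is far heavier. The two conditions $\BelldPlus^{(q)}(x_1,1,1,0)^{r}+|x_1|^{r(p-1)}\le1$ and $\BelldPlus^{(p)}(x_1,1,1,0)^{r(p-1)}+|x_1|^{r}\le1$ are \emph{not} literally the same graph with axes swapped: the first reads off the boundary of the slice $\{(x_1,x_2):(x_1,x_2,1,1,0)\in\mathbb{K}\}$ in the $x_1$-direction and the second in the $x_2$-direction, and for a convex body these envelopes are different curves. The equivalence instead comes from observing that, by convexity and the central symmetry of the slice, each condition is equivalent to the containment of the whole slice in $\{|x_1|^{r}+|x_2|^{r(p-1)}\le1\}$. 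On top of that you would need the identification of the complex and real Bellman functions for the exponent $q<2$, which the paper only asserts is ``similar.'' The Hahn--Banach argument avoids all of this.
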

\begin{proof}
By the Hahn-Banach theorem, we have $d_{p,r(p-1)}^{\star} = 1$ if and only if the 
inequality 
\begin{equation*}
\big|\langle \Nop_p(f),e
\rangle\big|^{r}+
\big|\langle f,g\rangle\big|^{r(p-1)}
\le\|f\|^{r(p-1)}_{L^p},\qquad 
\|e\|_{L^p}=\|g\|_{L^q}=1, \quad \langle e,g\rangle = 0
\end{equation*}
holds true. We introduce a new function~$F = \Nop_p(f)$ and restate this as
\begin{equation*}
\big|\langle F,e\rangle\big|^{r}+
\big|\langle \Nop_q(F),g\rangle\big|^{r(p-1)}
\le\|F\|^r_{L^q},\qquad 
\|e\|_{L^p}=\|g\|_{L^q}=1, \quad \langle e,g\rangle = 0,
\end{equation*}
which is equivalent to~$d_{q,r}^{\star}=1$.
\end{proof}
\begin{cor}
For any~$q < 2$, we have~$d_{q,2}^{\star} = 1$.
\end{cor}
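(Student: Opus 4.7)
The plan is to derive this corollary directly from the two results immediately preceding it: the proposition establishing $d_{p,2(p-1)}^{\star} = 1$ for all $p > 2$, and the duality lemma (Lemma~\ref{Duality}) which equates the conditions $d_{p,r(p-1)}^{\star} = 1$ and $d_{q,r}^{\star} = 1$ for dual exponents $p, q$.

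Concretely, I would fix $q \in (1,2)$ and let $p = q/(q-1) \in (2,\infty)$ be its H\"older conjugate. I would then invoke Lemma~\ref{Duality} with the parameter choice $r = 2$. On the ``$p$-side'' of the duality, the relevant constant is $d_{p,r(p-1)}^{\star} = d_{p,2(p-1)}^{\star}$, which equals $1$ by the preceding proposition since $p > 2$. By the equivalence in Lemma~\ref{Duality}, this immediately yields $d_{q,2}^{\star} = 1$, which is the desired conclusion.

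There is essentially no obstacle here; the content of the corollary is just the combination of the two prior results, and the only thing to verify is that the index matching works out correctly, namely that $r = 2$ on the $q$-side corresponds to the exponent $2(p-1)$ on the $p$-side, which is precisely the range covered by the proposition. Since the lemma and proposition have already been proved, no further computation is required.
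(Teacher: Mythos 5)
Your proposal is correct and is exactly the argument the paper intends: the corollary is stated right after Lemma~\ref{Duality} precisely so that it follows by taking $r=2$ there and invoking the preceding proposition $d_{p,2(p-1)}^{\star}=1$ for the dual exponent $p>2$. The index matching you check ($r=2$ on the $q$-side corresponding to $r(p-1)=2(p-1)$ on the $p$-side) is the only point of substance, and you have it right.
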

By Lemma~\ref{rlessthan2},~$d^\star_{q,r} = 0$ holds when~$r < 2$. 
Hence, by Lemma~\ref{Duality}, we have $d^\star_{p,r} < 1$ if~$r < 2(p-1)$ 
and~$p >2$. This proves Theorem~\ref{Elementary}.

\noindent Haakan Hedenmalm

\noindent Department of Mathematics,
KTH Royal Institute of Technology,
Sweden.

\noindent Department of Mathematics and Mechanics,
St-Petersburg State University, 28 Universitetski pr., St-Petersburg 198504,
Russia.

\noindent haakanh@math.kth.se

\medskip

\noindent Dmitriy M. Stolyarov

\noindent Department of Mathematics, Michigan State University, USA.

\noindent P. L. Chebyshev Research Laboratory, 
St. Petersburg State University, Russia.

\noindent St. Petersburg Department of Steklov Mathematical Institute, 
Russian Academy of Sciences (PDMI RAS), Russia.

\noindent dms@pdmi.ras.ru

\medskip

\noindent Vasily I. Vasyunin

\noindent St. Petersburg Department of Steklov Mathematical Institute, 
Russian Academy of Sciences (PDMI RAS), Russia.

\noindent Department of Mathematics and Mechanics,
St-Petersburg State University, 28 Universitetski pr., St-Petersburg 198504,
Russia.

\noindent vasyunin@pdmi.ras.ru

\medskip

\noindent Pavel B. Zatitskiy

\noindent D\'epartement de math\'ematiques et applications, \'Ecole normale 
sup\'erieure, CNRS, PSL Research University,
France.

\noindent P. L. Chebyshev Research Laboratory, St. Petersburg State 
University, Russia.

\noindent St. Petersburg Department of Steklov Mathematical Institute, 
Russian Academy of Sciences (PDMI RAS), Russia.

\noindent pavelz@pdmi.ras.ru

\end{document}